\newcommand{\Trace}{\mathop{\bf tr}}
\newcommand{\tr}{\mathsf{ T}}
\newcommand{\RR}{\mathbb{R}}
\newcommand{\Amap}{\mathcal{A}}
\newcommand{\Ajmap}{\Amap^*}
\newcommand{\Dist}{\mathrm{dist}}
\DeclareMathOperator*{\argmin}{\arg\!\min}
\DeclareMathOperator*{\argmax}{\arg\!\max}
\newcommand{\innerproduct}[2]{\left \langle #1, #2 \right\rangle }
\newcommand{\Xstar}{X^\star}
\newcommand{\Zstar}{Z^\star}
\newcommand{\ystar}{y^\star}
\newcommand{\RRnp}{\RR^{n}_+}
\newcommand{\sn}{\mathbb{S}^n}
\newcommand{\snp}{\mathbb{S}^n_+}
\newcommand{\snn}{\mathbb{S}^n_-}
\newcommand{\psolution}{\Omega_{\mathrm{P}}}
\newcommand{\dsolution}{\Omega_{\mathrm{D}}}
\newcommand{\bary}{\bar{y}}
\newcommand{\barX}{\bar{X}}
\newcommand{\barZ}{\bar{Z}}
\newcommand{\fp}{f_{\mathrm{P}}}
\newcommand{\fd}{f_{\mathrm{D}}}
\newcommand{\lammin}{\lambda_{\min}}
\newcommand{\lammax}{\lambda_{\max}}
\newcommand{\E}{\mathbb{E}}
\newcommand{\xstar}{x^\star}
\newcommand{\bXstar}{\bar{X}^\star}
\newcommand{\bystar}{\bar{y}^\star}
\newcommand{\bZstar}{\bar{Z}^\star}
\newcommand{\hatXstar}{\hat{X}^\star}
\newcommand{\hatystar}{\hat{y}^\star}
\newcommand{\hatZstar}{\hat{Z}^\star}
\theoremstyle{plain}
\crefname{equation}{}{}
\newtheorem{theorem}{Theorem} 
\newtheorem{proposition}{Proposition}
\newtheorem{lemma}{Lemma}
\theoremstyle{definition}
\newtheorem{definition}{Definition}
\theoremstyle{remark}
\newtheorem{remark}{Remark}
\theoremstyle{example}
\newtheorem{example}{Example}
\theoremstyle{fact}
\newcommand{\prox}{\mathrm{prox}}
\newcommand{\muq}{\mu_{\mathrm{q}}} 
\newcommand{\pstar}{p^\star}
\newcommand{\dstar}{d^\star}
\newcommand{\relint}{\mathrm{relint}}
\title{Inexact~Augmented~Lagrangian~Methods~for~Conic 
\!Programs:\!~Quadratic~Growth~and~Linear~Convergence\!}
\author{Feng-Yi Liao$^{1}$ \quad  Lijun Ding$^{2}$ \quad Yang Zheng$^{1}$\thanks{Corresponding author}\\
  \\
  $^{1}$Department of Electrical and Computer Engineering, UC San Diego \\
  $^{2}$Department of Mathematics, UC San Diego
}
\begin{document}

\maketitle

 \vspace{-5mm}
\begin{abstract} 
Augmented Lagrangian Methods (ALMs) are widely employed in solving constrained optimizations, and some efficient solvers are developed based on this framework. Under the quadratic growth assumption, it is known that the dual iterates and the Karush–Kuhn–Tucker (KKT) residuals of ALMs applied to semidefinite programs (SDPs) converge linearly. In contrast, the convergence rate of the primal iterates has remained elusive.~In this paper, we resolve this challenge by establishing new \textit{quadratic growth} and \textit{error bound} properties for primal and dual SDPs under the strict complementarity condition.~Our main results reveal that both primal and dual iterates of the ALMs converge linearly contingent solely upon the assumption of strict complementarity and a bounded solution set. This finding provides a positive answer to an open question regarding the asymptotically linear convergence of the primal iterates of ALMs applied to semidefinite optimization.
\end{abstract}

\section{Introduction}
We consider the standard primal and dual semidefinite programs (SDPs) of the form 
\vspace{1pt}

\begin{minipage}{0.49\textwidth}
\begin{equation}
    \label{eq:primal-conic}
    \begin{aligned}
        \pstar :=\min_{X \in \sn } & \quad \innerproduct{C}{X} \\\mathrm{subject~to} & \quad \Amap(X) = b, \\
        & \quad X \in \snp,
    \end{aligned}
    \tag{P}
\end{equation}
\end{minipage}
\begin{minipage}{0.49\textwidth}
\begin{equation}
    \label{eq:dual-conic}
    \begin{aligned}
        \dstar := \max_{y \in \RR^m, Z \in \sn} & \quad \innerproduct{b}{y} \\\mathrm{subject~to} & \quad  \Ajmap(y) + Z = C,\\
        & \quad  Z \in \snp ,
    \end{aligned}
    \tag{D}
\end{equation}
\end{minipage}

\vspace{1pt}

where the problem data consists of a cost matrix $C \in \sn$, a constant vector $b \in \RR^m$, a linear map $\Amap: \sn \to \RR^m$ $\Amap(X) = \begin{bmatrix}
     \innerproduct{A_1}{X},\cdots,\innerproduct{A_m}{X}
 \end{bmatrix}^\tr$ with $A_1,\cdots,A_m \in \sn$, and its adjoint operator $\Ajmap: \RR^m \to \sn$ as $\Ajmap(y) = \sum_{i=1}^m A_i y_i$ that satisfy $\innerproduct{\Amap(X)}{y} = \innerproduct{X}{\Ajmap(y)}, \forall X \in \sn , y \in \RR^m$.  

SDPs are a broad and powerful class of conic programs, which include linear programs and second-order cone programs as special cases. 
The class of SDPs 
has found an extensive list of applications, including control theory \cite{boyd1994linear}, combinatorial optimization \cite{sotirov2012sdp}, polynomial optimization \cite{blekherman2012semidefinite}, machine learning \cite{lanckriet2004learning,raghunathan2018semidefinite,batten2021efficient}, and beyond \cite{vandenberghe1999applications}. Meanwhile, many algorithms have been developed to solve \cref{eq:primal-conic,eq:dual-conic}, ranging from reliable interior point methods \cite{vandenberghe1996semidefinite,alizadeh1995interior,wolkowicz2012handbook} to scalable first-order methods \cite{zheng2021chordal,wen2010alternating,ocpb:16,zheng2020chordal}. In particular, it is demonstrated that Augmented Lagrangian methods (ALMs) \cite{hestenes1969multiplier,powell1969method} are suitable for solving large-scale optimization problems. Some efficient ALM-based algorithms have been developed to solve \cref{eq:primal-conic,eq:dual-conic}. 
For example, the celebrated low-rank matrix factorization \cite{burer2003nonlinear} was integrated with ALM for efficient algorithm design. A semi-smooth Newtown-CG algorithm was proposed for ALM to solve SDPs with many affine constraints in \cite{zhao2010newton}. An enhanced version was introduced in \cite{yang2015sdpnal+} to tackle degenerate SDPs by combining a warm starting strategy. The algorithms \cite{zhao2010newton, yang2015sdpnal+} have been implemented into a solver, \textsf{SDPNAL+}, which has shown very impressive numerical performance in benchmark problems. Other recent ALM-based algorithms include \cite{yurtsever2021scalable,cui2022augmented,liang2021inexact}.

Despite the impressive practical performance, theoretical convergence guarantees of applying ALMs to \cref{eq:primal-conic,eq:dual-conic} are less complete. Thanks to the seminal work \cite{rockafellar1976augmented}, it is well-known that running ALMs for the primal \cref{eq:primal-conic} is equivalent to executing the proximal point method (PPM) for the dual \cref{eq:dual-conic}. Thus, the classical convergences of ALM are typically inherited from PPM, which only guarantees that the dual iterates converge linearly when a \textit{Lipschitz} continuity property is satisfied \cite{rockafellar1973multiplier}. The Lipchitz condition requires \cref{eq:primal-conic} to have a unique solution. The uniqueness assumption was relaxed in \cite{luque1984asymptotic}, which allows multiple optimal solutions but requires an \textit{error bound} property. Recently, this error bound property was established in \cite{cui2019r} in a general conic optimization setup. The work \cite{cui2019r} also proves linear convergence of KKT residuals when applying ALM to convex composite conic programming. However, due to the connections between ALM and PPM, all these results only guarantee linear convergence for the dual iterates, while the convergence rate of the primal iterates in ALM remains unclear. Indeed, it is noted in \cite{cui2022augmented,cui2019r} that the problem of \textit{whether the primal sequence generated by the ALM can also converge asymptotically linearly} is open. 

This asymmetry in convergence guarantees is notably dissatisfying, especially considering the elegant duality in \cref{eq:primal-conic,eq:dual-conic}. In this paper, our main goal is to establish linear convergence guarantees for both primal and dual iterates when applying an inexact ALM to SDPs (either \cref{eq:primal-conic} or \cref{eq:dual-conic}), under the usual assumption of strong duality and strict complementarity. 

\vspace{1 mm}
\textbf{Our contributions.} To resolve the challenge, we have made novel contributions from two aspects. 
On the problem structure side: \vspace{-3pt}
\begin{enumerate}[label=(\alph*),leftmargin=*,align=left,noitemsep]
    \item Under the standard \textit{strict complementarity} assumption, we establish the \textit{quadratic growth} and \textit{error bound} for both \textit{primal and dual} SDPs \cref{eq:primal-conic,eq:dual-conic} over any compact set containing an optimal solution (see \cref{theorem:growth-error-bound,theorem:growth-error-bound-dual}).~Our results not only extend the findings of \cite{cui2016asymptotic} from a small (yet unknown) neighborhood to any compact set, benefiting the analysis of iterative algorithms, 
    but also improves the recent results in \cite{ding2023strict,liao2023overview,ding2023revisiting}; see \Cref{remark:contributions} for a comparison.
    \item To establish the \textit{quadratic growth}, we unveil a \textit{new characterization} of the preimage of the subdifferential of the exact penalty functions for SDPs. This reveals a useful connection between exact penalty functions and indicator functions (see \cref{eq:pre-image-exact-penalty} in \cref{lemma:general-growth-g(x)}). We believe that this new characterization is of independent interest.
    \item Using the new characterization, we further provide a new and simple proof for the growth properties in the exact penalty functions of  
    $\snp$ (see \cref{appendix-sec:exact-penalty}), and clarify some subtle differences in constructing exact penalty functions (see \cref{subsection-growth-error-bound,remark:exact-penalty}).
\end{enumerate}

\vspace{-3pt}

On the algorithm analysis side:
\vspace{-3pt}
\begin{enumerate}[label=(\alph*),leftmargin=*,align=left,noitemsep]
   \item By leveraging the established quadratic growth and error bound, we show that applying ALM to solve either the primal or dual SDPs \cref{eq:primal-conic,eq:dual-conic} has linear convergence for both the primal and dual iterates (see \cref{prop:KKT-residual}). 
    \item We establish symmetric versions of inexact ALMs for solving both the primal and dual conic programs (see \cref{section:linear-convergence,appendix:ALMs-dual}), which is less emphasized in the literature. 
    \item We clarify the subtlety concerning the (in)feasibility of the dual iterates by inexact ALMs (see \cref{subsection:linear-ALM}), which is an important issue concerning the relationship between PPM and ALM. 
\end{enumerate}
\textbf{Related works}: We here review some closely related results in the literature. 

\textbf{Quadratic growth and error bound.}
Many studies have been dedicated to establishing \textit{quadratic growth} or \textit{error bound} for conic optimizations under various assumptions  \cite{ding2023strict,cui2017quadratic,cui2016asymptotic,sturm2000error,zhang2000global,drusvyatskiy2018error,zhou2017unified}. Starting from the famous Hoffman’s lemma \cite{hoffman1952approximate}, a global error bound for linear systems has been established, which was later extended to a partially infinite-dimensional case \cite{burke1996unified}. 
Sturm investigated the error bound for SDPs, in which the exponent term is bounded by $2^d$ with $d$ being the \textit{sigularity} degree that is at most $n-1$ \cite{sturm2000error}. Zhang provided a simple proof of error bound for a feasibility system under Slater's condition \cite{zhang2000global}. 
Zhou and So provided a unified proof to establish error bound for a class of structured convex optimization \cite{zhou2017unified}. The results in \cite{zhou2017unified} are generalized to convex problems with spectral functions \cite{cui2017quadratic}. Drusvyatskiy and Lewis discussed quadratic growth for a convex problem of the form $\min_{x} f(Ax) + g(x)$ under \textit{dual strict complementarity} \cite[Section 4]{drusvyatskiy2018error}. Very recently, Ding and Udell established an error bound property for general conic programs via an elementary framework using \textit{strict complementarity} \cite{ding2023strict}. 
All the aforementioned results have slightly different assumptions, resulting in different forms of quadratic growth properties. They are not directly applicable to our purpose of establishing linear convergences of primal and dual iterates in ALM; a detailed comparison will be provided in \Cref{remark:contributions} after introducing our main result.  

\textbf{Augmented Lagrangian method (ALM) and Proximal point method (PPM).} The ALM is first introduced in \cite{hestenes1969multiplier,powell1969method} to improve numerical performance of penalty methods. It is shown by Rockafellar that the augmented dual function is continuously differentiable \cite{rockafellar1973dual}, and the ALM can be viewed as an augmented dual ascent method \cite{rockafellar1973multiplier}, thus the convergence of ALM can be analyzed via dual ascent.   
The seminal work \cite{rockafellar1976augmented} established a strong relation between ALM and PPM: the dual iterate by ALM is the same as the proximal update on the dual function by PPM. From the convergence of PPM, we know that the dual iterates of ALM converge linearly when the preimage of the subdifferential map of the dual function is \textit{Lipschitz} continuous at the origin \cite{rockafellar1973multiplier}. However, the Lipchitz condition requires the solution to be unique, which is not suitable for many practical applications.  
Luque made an important extension in PPM to allow multiple solutions while maintaining linear convergence \cite{luque1984asymptotic}. 
Recently, Cui et. al relaxed the Lipschitz assumption to \textit{upper}-Lipschtiz continuous which allows multiple primal and dual solutions \cite{cui2019r}. In addition, the KKT residuals of convex composite conic programs are shown to converge linearly under the assumption that the dual function satisfies quadratic growth \cite{cui2019r}. The primal iterates of ALM are also known to converge linearly under a much stronger Lipschitz assumption on the augmented Lagrangian function \cite{rockafellar1976augmented,cui2019r}. 
As pointed out in \cite[Section 3]{cui2019r}, the Lipschitz property of the Lagrangian function is more conservative than the quadratic growth of the primal or dual function. To the best of our knowledge, when applying ALM to conic optimization \cref{eq:primal-conic,eq:dual-conic}, the problem of whether the primal iterates converge linearly under the normal quadratic growth condition remains open.

\textbf{Paper outline.} The rest of this paper is structured as follows.~\cref{sec:preliminaries} reviews some preliminaries in SDPs and PPM. \cref{section:QG} establishes quadratic growth and error bound properties.
\cref{section:linear-convergence} establishes the linear convergence of primal and dual iterates of ALMs under standard assumptions.
\cref{sec:numerical-experimetns} presents numerical experiments, and \cref{sec:conclusion} concludes the paper. 
While all results are presented for SDPs, analogous versions exist for other conic programs, such as LPs and SOCPs.

\vspace{-1mm}
\section{Preliminaries}
\label{sec:preliminaries}
\subsection{Strong duality and strict complementarity}
\label{subsec:conic-optimization}
In this paper, we assume the linear mapping $\Amap$ to be \textit{surjective} (or $\Ajmap$ is \textit{injective}, thus $y$ and $Z$ have a unique correspondence). This is equivalent to $A_1,\cdots,A_m$ being linear independent. We here introduce two important regularity conditions: \textit{strong duality} and \textit{strict complementarity}. 
\begin{definition}[Strong duality] \label{definition:strong-duality}
We say \cref{eq:primal-conic,eq:dual-conic} satisfies \textit{strong duality} if we have $\pstar = \dstar$. \vspace{-1mm}
\end{definition}
Strong duality only requires the optimal values to be the same. The existence of optimal primal and dual variables $(\Xstar,\ystar,\Zstar)$ is ensured by the primal and dual \textit{Slater's} condition \cite[Theorem 3.1]{vandenberghe1996semidefinite}, i.e., there exist primal and dual feasible points $X$ and $y$ satisfying $X \in \mathrm{int}(\snp)$ and $C - \Ajmap(y)  \in \mathrm{int}(\snp)$. Let $\psolution \subseteq \sn$  and $\dsolution \subseteq \RR^m \times \sn$ denote the optimal solutions of \cref{eq:primal-conic,eq:dual-conic}, respectively,
\vspace{-5mm}
\begin{subequations}
    \begin{align} 
        \psolution &= \{X\in \sn \mid \innerproduct{C}{X} =\pstar,\, \Amap(X) = b, X \in \snp \}, \label{eq:primal-solution} \\
        \dsolution &= \{(y,Z) \in \RR^m \times \sn  \mid \innerproduct{b}{y} =\dstar, \,   \Ajmap(y)+Z = C, Z \in \snp \}.  \label{eq:dual-solution} 
    \end{align}
\end{subequations}
Under the primal and dual \textit{Slater's} condition, the solution sets $\psolution,\dsolution$ are nonempty and compact (see e.g., \cite[Section 2]{ding2023revisiting} or \cite[Prop. 2.1]{liao2023overview}). We further have $\innerproduct{\Zstar}{ \Xstar} = 0, \forall \Xstar \!\in\! \psolution,\, (\ystar,\Zstar)\! \in\! \dsolution$, which is called \textit{complementary slackness}. 

Next, we introduce another regularity condition: \textit{dual~strict complementarity}, which is the key to ensuring many nice properties in a range of conic problems~\cite{drusvyatskiy2018error,ding2023strict,zhou2017unified}.
\begin{definition}[Dual strict complementarity for SDPs]
    \label{def:dual-sc-conic}
    Given a pair of optimal solutions $\Xstar \in \psolution, (\ystar,\Zstar) \in \dsolution$, we say that $(\Xstar,\ystar,\Zstar)$ satisfies \textit{dual strict complementarity} if 
     $  \Xstar \in \relint(\mathcal{N}_{(\snp)^\circ}(-\Zstar)),$ 
    where $(\snp)^\circ$ is the polar cone of $\snp$, $\mathcal{N}_{(\snp)^\circ}(-\Zstar)$ denotes the normal cone of $(\snp)^\circ$ at $-\Zstar$, and $\relint$ denotes relative interior. If \cref{eq:primal-conic,eq:dual-conic} have one such pair, we say that \cref{eq:primal-conic,eq:dual-conic} satisfy dual strict complementarity.
\end{definition}
\vspace{-1 mm}
We note that the usual notion of \textit{strict complementarity} for SDPs is equivalent to \cref{def:dual-sc-conic}, as pointed out in \cite[Appendix B]{ding2023strict}. For completeness, we also review the standard notion of strict complementarity in \Cref{subsection:LPs-SOCPs-SDPs-complementarity}. The notion of (dual) strict complementarity is not restrictive,~and it holds generically for conic programs \cite{dur2017genericity}, \cite[Theorem 15]{alizadeh1997complementarity}. Recently, it has been revealed that many structured SDPs from practical applications also satisfy strict complementarity~\cite{ding2021simplicity}.   One can also define \textit{primal} strict complementarity, 
and the primal and dual strict complementarity are not equivalent in general, but they are equivalent for the so-called self-dual cones, such as $\snp$.

\subsection{Proximal point methods (PPMs)}
\label{subsection:PPM}
It is now well-known that running ALMs for the primal \cref{eq:primal-conic} is equivalent to executing a PPM for the dual \cref{eq:dual-conic}, and vice versa \cite{rockafellar1976augmented}. We here give a quick review of PPMs and their standard convergences. 

Let $f: \RR^n \to\bar{\RR}$ be a proper closed convex function, and consider the problem $f^\star = \min_{x \in \RR^n} \; f(x). $ Let $S = \argmin_x \, f(x)$, which we assume to be nonempty.   
We define the \textit{proximal operator} as
\begin{equation} \label{eq:PPM-subproblem}
    \prox_{\alpha ,f}(x):=\argmin_{y \in \RR^n}\; f(y) + \frac{1}{2\alpha} \left\|y - x\right\|^2,
\end{equation}
where $\alpha\! > \!0$. Starting with an initial point $x_0 \!\in\! \RR^n$, the PPM generates a sequence of points as follows
 $x_{k+1} \! = \!  \mathrm{prox}_{c_k, f}(x_k), \, k = 0, 1, 2, \ldots$, 
where $\{c_k\}$ is a sequence of positive numbers bounded away from zero. The proximal operator \cref{eq:PPM-subproblem} is always well-defined thanks to strong convexity.  

In practice, the proximal operator \cref{eq:PPM-subproblem} may not be easily evaluated, and thus an inexact version is often used \cite{rockafellar1976monotone}. In particular, an inexact PPM generates 
a sequence of points as  
\begin{equation}
    \label{eq:iPPM}
    x_{k+1}\approx \prox_{c_k,f}(x_k), \quad k = 0, 1,2, \ldots. 
\end{equation}
Two classical types of inexactness, originating from the seminal work \cite{rockafellar1976monotone}, are 
 \begin{align}
        \|x_{k+1} - \prox_{c_k ,f}(x_k)\| & \leq \epsilon_k, \quad \textstyle \sum_{k=0}^\infty  \epsilon_k < \infty, \tag{a} \label{eq:stopping-1} \\
        \|x_{k+1} - \prox_{c_k ,f}(x_k)\| & \leq \delta_k \| x_{k+1} -x_k\|, \quad  \textstyle \sum_{k=0}^\infty \delta_k < \infty, \tag{b}
        \label{eq:stopping-2}
    \end{align}
    where $\{\epsilon_k\}_{k\geq0}$ and $\{\delta_k\}_{k\geq0}$ are two sequences of inexactness for \cref{eq:iPPM}. The classical work \cite{rockafellar1976monotone} has established asymptotic convergence of the iterates $x_k$ from \cref{eq:iPPM} when using criterion \cref{eq:stopping-1}. Fast linear convergences have also been established under various regularity conditions, such as the inverse of the subdifferential $(\partial f)^{-1}$ is locally Lipschitz at $0$ \cite{rockafellar1976monotone} (or relaxed to be {upper Lipschitz continuous} at $0$ \cite{luque1984asymptotic}), and more generally  $\partial f$ is {metrically subregular} \cite{cui2016asymptotic,leventhal2009metric} (or $f$ satisfies \textit{quadratic growth} \cite{drusvyatskiy2018error,liao2023error}). 
    Recall that $f$ satisfies \textit{quadratic growth} if there exists a constant $\muq > 0$ such that 
    \begin{equation}
        \label{eq:QG}
        f(x) - f^\star \geq \muq/2 \cdot \Dist^2(x,S), \; \forall \, x \in \mathcal{U}, 
        \tag{QG}
    \end{equation}
    where $\mathcal{U}$ is a neighborhood containing an optimal solution $\xstar \in S$. Convergence results for \cref{eq:iPPM} are: 
\begin{lemma}
    \label{prop:convergence-ppm} 
   Let $S = \argmin_{x\in \RR^n} \, f(x)$ and suppose $ S \neq \emptyset$.  Let $\{x_k\}_{k\geq 0}$ be a sequence from \cref{eq:iPPM}. \vspace{-1mm} \begin{enumerate}[label=(\alph*),leftmargin=*,align=left]
        \item If criterion \cref{eq:stopping-1} is used, then $\lim_{k \to \infty} x_k = x_\infty\! \in\! S$, i.e.,  $x_k$ converges to an optimal~solution. 
        \item If both criteria \cref{eq:stopping-1} and \cref{eq:stopping-2} are used, and $f$ satisfies \cref{eq:QG}, then there exists some $\hat{k} > 0$ such that 
         $   \Dist(x_{k+1},S) \leq \hat{\theta}_{k} 
            \Dist(x_{k},S),  \forall k \geq \hat{k}, $
        where $\hat{\theta}_{k} = \frac{\theta_k + 2 \delta_k}{1 - \delta_k}$ with ${\theta}_{k} = 1/\sqrt{ c_k \muq \!+\!1} \!< \!1.$
   \end{enumerate}
\end{lemma}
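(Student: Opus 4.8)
The plan is to reduce both parts to two classical facts about the proximal operator $\prox_{c,f}$ of a proper closed convex $f$: it is firmly nonexpansive, and its fixed-point set equals $S=\argmin f$ for every $c>0$. Throughout I write $\hat x_{k+1}:=\prox_{c_k,f}(x_k)$ for the exact step, so that the two criteria read $\|x_{k+1}-\hat x_{k+1}\|\le\epsilon_k$ (criterion~\cref{eq:stopping-1}) and/or $\|x_{k+1}-\hat x_{k+1}\|\le\delta_k\|x_{k+1}-x_k\|$ (criterion~\cref{eq:stopping-2}).

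For part (a), I would fix $\xstar\in S$ and first combine nonexpansiveness, $\|\hat x_{k+1}-\xstar\|\le\|x_k-\xstar\|$, with the triangle inequality to get $\|x_{k+1}-\xstar\|\le\|x_k-\xstar\|+\epsilon_k$; since $\sum_k\epsilon_k<\infty$, a standard quasi-Fej\'er argument shows $\{x_k\}$ is bounded and $\|x_k-\xstar\|$ converges for every $\xstar\in S$. Next, firm nonexpansiveness at the fixed point $\xstar$ gives $\|\hat x_{k+1}-x_k\|^2\le\|x_k-\xstar\|^2-\|\hat x_{k+1}-\xstar\|^2$; combining with $\|x_{k+1}-\xstar\|^2\le\|\hat x_{k+1}-\xstar\|^2+\bigO(\epsilon_k)$ (using boundedness) and summing over $k$ forces $\sum_k\|\hat x_{k+1}-x_k\|^2<\infty$, hence $\hat x_{k+1}-x_k\to 0$ and also $x_{k+1}-x_k\to 0$. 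The optimality condition of the proximal subproblem reads $c_k^{-1}(x_k-\hat x_{k+1})\in\partial f(\hat x_{k+1})$, and since $c_k$ is bounded away from $0$ these subgradients vanish; taking any cluster point $x_\infty$ of $\{x_k\}$ — which is then also a cluster point of $\{\hat x_{k+1}\}$ — and invoking closedness of $\mathrm{gph}\,\partial f$ yields $0\in\partial f(x_\infty)$, i.e. $x_\infty\in S$. Since $\|x_k-x_\infty\|$ converges and a subsequence of it tends to $0$, the whole sequence converges to $x_\infty\in S$.

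For part (b), I would first invoke part (a) (criterion~\cref{eq:stopping-1} is in force) to get $x_k\to x_\infty\in S$, so there is $\hat k$ with $x_k,\hat x_{k+1}\in\mathcal U$ and $\delta_k<1$ for all $k\ge\hat k$, where $\mathcal U$ is the neighborhood in \cref{eq:QG}. Fixing $k\ge\hat k$ and letting $\bar x_k:=\Pi_S(x_k)$ (so $\|x_k-\bar x_k\|=\Dist(x_k,S)$ and $f(\bar x_k)=f^\star$), strong convexity with modulus $1/c_k$ of $y\mapsto f(y)+\tfrac{1}{2c_k}\|y-x_k\|^2$, evaluated at its minimizer $\hat x_{k+1}$ and at $\bar x_k$, gives
\begin{equation*}
f^\star+\frac{1}{2c_k}\Dist^2(x_k,S)\;\ge\;f(\hat x_{k+1})+\frac{1}{2c_k}\|\hat x_{k+1}-x_k\|^2+\frac{1}{2c_k}\|\hat x_{k+1}-\bar x_k\|^2 .
\end{equation*}
Dropping the middle term, applying \cref{eq:QG} at $\hat x_{k+1}$, and using $\|\hat x_{k+1}-\bar x_k\|\ge\Dist(\hat x_{k+1},S)$, a one-line rearrangement gives the exact-step contraction $\Dist(\hat x_{k+1},S)\le\theta_k\Dist(x_k,S)$ with $\theta_k=1/\sqrt{c_k\muq+1}$. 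It then remains to absorb the inexactness: nonexpansiveness gives $\|\hat x_{k+1}-x_k\|\le\|\hat x_{k+1}-\bar x_k\|+\|\bar x_k-x_k\|\le 2\Dist(x_k,S)$, so criterion~\cref{eq:stopping-2} yields $\|x_{k+1}-x_k\|\le\delta_k\|x_{k+1}-x_k\|+2\Dist(x_k,S)$, i.e. $\|x_{k+1}-x_k\|\le\frac{2}{1-\delta_k}\Dist(x_k,S)$; plugging this into $\Dist(x_{k+1},S)\le\Dist(\hat x_{k+1},S)+\|x_{k+1}-\hat x_{k+1}\|\le\theta_k\Dist(x_k,S)+\delta_k\|x_{k+1}-x_k\|$ produces the stated bound with $\hat\theta_k=\frac{\theta_k+2\delta_k}{1-\delta_k}$ (using $\theta_k<1$).

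The main obstacle is the interaction between the locality of \cref{eq:QG} and the algorithm: because \cref{eq:QG} only holds on a neighborhood $\mathcal U$, part (b) cannot be purely ``local-to-local'' and must borrow the asymptotic convergence from part (a) to certify that the iterates eventually enter $\mathcal U$ — this is exactly why the hypothesis of part (b) retains criterion~\cref{eq:stopping-1}. The remaining work is bookkeeping: in part (a), carrying the summable errors $\epsilon_k$ through the Fej\'er/telescoping estimates and identifying cluster points with minimizers via closedness of $\partial f$; in part (b), tracking $\delta_k$ through the two triangle inequalities so that the contraction factor comes out exactly $\hat\theta_k$.
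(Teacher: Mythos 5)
Your proposal is correct. Note that the paper does not actually prove this lemma: part (a) is cited from Rockafellar's Theorem~1 in \cite{rockafellar1976monotone} and part (b) from Theorem~5.3 of \cite{liao2023error}, so your argument is a self-contained reconstruction rather than a competitor to an in-paper proof. Your part (a) is the classical quasi-Fej\'er argument (firm nonexpansiveness at fixed points, summable perturbations, $x_k-\hat x_{k+1}\to 0$, demiclosedness of $\partial f$ to identify cluster points with minimizers), and your part (b) — using the $1/c_k$-strong convexity of the prox subproblem evaluated at $\hat x_{k+1}$ and at $\Pi_S(x_k)$, then applying \cref{eq:QG} to get the exact-step factor $\theta_k=1/\sqrt{c_k\muq+1}$, then two triangle inequalities to absorb the inexactness — is precisely the "simple proof" the paper points to, and your bookkeeping in fact yields the slightly sharper factor $\theta_k+\tfrac{2\delta_k}{1-\delta_k}\le\hat\theta_k$. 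All the individual steps check out.

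One caveat, which you correctly flag and which is shared by the paper's own discussion after the lemma: part (a) only guarantees $x_k\to x_\infty\in S$, and if $S$ is not a singleton the limit $x_\infty$ need not lie in the particular neighborhood $\mathcal U$ of the particular $\xstar$ appearing in \cref{eq:QG}, so the assertion "there is $\hat k$ with $x_k,\hat x_{k+1}\in\mathcal U$ for $k\ge\hat k$" implicitly requires \cref{eq:QG} to hold on a neighborhood of the whole solution set (or at least of $x_\infty$). This is harmless for the paper's application, where \Cref{theorem:growth-error-bound,theorem:growth-error-bound-dual} establish the growth property on arbitrary compact sets, but it is worth stating explicitly if the lemma is meant to stand alone.
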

The first asymptotic convergence is taken from \cite[Theorem 1]{rockafellar1976monotone}, and different proofs for the second linear convergence under \cref{eq:QG} are available, and a simple one can be found in \cite[Theorem 5.3]{liao2023error}. Criterion \cref{eq:stopping-1} guarantees the iterate $x_k$ will arrive within a neighbor $\mathcal{U}$ where \cref{eq:QG} holds after some number $\hat{k}$, then criterion \cref{eq:stopping-2} ensures the linear convergence; see \cite[Theorem 5.3]{liao2023error} for details. 

Note that \Cref{prop:convergence-ppm} only guarantees the (linear) convergence of iterates $x_k$, which corresponds to the dual iterates in the ALM \cite{rockafellar1976augmented} (see \Cref{prop:PPM-ALM-review} in \Cref{section:linear-convergence}). Thus, once a suitable condition like \Cref{eq:QG} is established for \Cref{eq:primal-conic,eq:dual-conic}, applying the ALM will enjoy linear convergence for dual iterates \cite{cui2022augmented,luque1984asymptotic,cui2019r}. In this paper, we aim to establish linear convergences of both primal and dual iterates in the ALM, when applied to \Cref{eq:primal-conic} or \Cref{eq:dual-conic}. For this, we first establish quadratic growth in \Cref{section:QG}.

\vspace{-1mm}
\section{Quadratic Growth in SDPs} \label{section:QG}

In this section, we identify favorable quadratic growth and error bound properties for both primal and dual SDPs in \cref{eq:primal-conic} and \cref{eq:dual-conic}, under the usual assumptions of strong duality~and~strict complementarity (cf. \cref{def:dual-sc-conic,definition:strong-duality}). These favorable structural properties will allow us~to~establish linear convergences of both primal and dual iterates of inexact ALMs applied to either \cref{eq:primal-conic} or \cref{eq:dual-conic}. 

\subsection{Growth and error bound properties}
\label{subsection-growth-error-bound}
To analyze PPM or ALM, we consider an equivalent reformulation of \cref{eq:primal-conic} or \cref{eq:dual-conic} in the form

\begin{minipage}{0.47\textwidth}
\begin{equation}
    \label{eq:conic-r-p}
    \begin{aligned}
         \fp^\star\!:=\!\min_{X \in \sn} \; & \fp(X) \!:=\! \innerproduct{C}{X} + g(X) \\
    \mathrm{subject~to}\quad & \Amap(X) = b,
    \end{aligned}
\end{equation}
\end{minipage}
\begin{minipage}{0.52\textwidth}
\begin{equation}
    \label{eq:conic-r-d}
    \begin{aligned}
        \fd^\star\!:=\!\min_{y\in \RR^m, Z \in \sn} \; & \fd(y,Z) \! :=\! \innerproduct{-b}{y} + h(Z)
        \\
        \mathrm{subject~to} \quad & \Ajmap (y) + Z =C, 
    \end{aligned}
\end{equation}
\end{minipage} 

\noindent where $g$ (and $h$, respectively) 
is chosen as either an indicator function $\delta_{\snp}$ or an exact penalty function, defined as\footnote{
This penalty parameter $\rho$ guarantees that \cref{eq:primal-conic,eq:conic-r-p} are equivalent. To further ensure the dual problem of \cref{eq:primal-conic} is equivalent to the dual of \cref{eq:conic-r-p}, we need to choose $\rho > \sup_{(\ystar,\Zstar)\in \dsolution} \Trace(\Zstar)$. 
This subtle point is less emphasized in the literature \cite{ding2023revisiting,liao2023overview}; see \cref{remark:exact-penalty} in the appendix for details.}
\begin{equation}
    \label{eq:exact-penalty-function}
    \begin{aligned}
        g(X) = \rho \max\{0,\lammax(-X)\}\qquad \text{with}~ \rho >  \Trace(\Zstar).
    \end{aligned}
\end{equation}
where $(\ystar,\Zstar) \in \dsolution$ can be any optimal solution. In \cref{eq:conic-r-d}, the exact penalty function $h$ is designed in the same way as \cref{eq:exact-penalty-function} where the lower bound on $\rho$ is replaced by $\rho > \Trace(\Xstar)$ and $\Xstar$ is any optimal solution in $ \psolution$.
The common feature in \cref{eq:conic-r-p,eq:conic-r-d} is to move the difficult conic constraint $X \in \snp$ and $Z \in \snp$ as a nonsmooth term $g(X)$ and $h(Z)$ in the cost function. It is clear that for both indicator and exact penalty functions, we have $g(X) = 0, \forall X \in \snp$ and $h(Z) = 0, \forall Z \in \snp$.  
For clarity, we state the following technical result. 
\begin{lemma} \label{eq:equivalence-penalty}
    Suppose strong duality holds for \cref{eq:primal-conic,eq:dual-conic}. Under the choice of an indicator function or an exact penalty function in \Cref{eq:exact-penalty-function}, the set of optimal solutions for \cref{eq:conic-r-p} is the same as $\psolution$ in \cref{eq:primal-solution}, and the set of optimal solutions for \cref{eq:conic-r-d} is the same as $\dsolution$ in \cref{eq:dual-solution}. 
\end{lemma}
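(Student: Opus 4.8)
The plan is to treat the two admissible choices of $g$ (and $h$) separately. For the indicator function $g=\idsnp$, the problem \cref{eq:conic-r-p} is verbatim \cref{eq:primal-conic}: the term $g(X)$ vanishes on $\snp$ and equals $+\infty$ off it, so $X$ is feasible with finite objective for \cref{eq:conic-r-p} iff it is feasible for \cref{eq:primal-conic}, with identical objective values; hence the two minimizer sets coincide, and by definition this set is $\psolution$. The same remark applied to $h=\idsnp$ shows the minimizers of \cref{eq:conic-r-d} coincide with $\dsolution$. Thus the whole content of the lemma is the exact-penalty case, which I would carry out for \cref{eq:conic-r-p}; the argument for \cref{eq:conic-r-d} is symmetric after exchanging the roles of the primal and dual data and using $\rho>\Trace(\Xstar)$ in place of $\rho>\Trace(\Zstar)$.

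For the exact-penalty case, one inclusion is immediate: any $X\in\psolution$ is feasible for \cref{eq:conic-r-p}, lies in $\snp$ so $g(X)=0$, and therefore $\fp(X)=\innerproduct{C}{X}=\pstar$; in particular $\fp^\star\le\pstar$. The crux is the matching lower bound $\fp(X)\ge\pstar$ for every $X$ with $\Amap(X)=b$, together with the fact that equality forces $X\in\snp$. Fix the dual optimal pair $(\ystar,\Zstar)\in\dsolution$ used to pin down $\rho$ in \cref{eq:exact-penalty-function} (such a pair exists precisely because that choice of $\rho$ presupposes it), and recall that strong duality gives $\innerproduct{b}{\ystar}=\dstar=\pstar$. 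Using $C=\Ajmap(\ystar)+\Zstar$, the adjoint identity, and $\Amap(X)=b$, one rewrites $\innerproduct{C}{X}=\innerproduct{\ystar}{\Amap(X)}+\innerproduct{\Zstar}{X}=\pstar+\innerproduct{\Zstar}{X}$.

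Next I would bound $\innerproduct{\Zstar}{X}$ from below. Decompose $X=X_+-X_-$ into its positive and negative parts $X_+,X_-\succeq0$. Since $\Zstar,X_+\succeq0$ we get $\innerproduct{\Zstar}{X}\ge-\innerproduct{\Zstar}{X_-}$, and the elementary inequality $\innerproduct{A}{B}\le\lammax(B)\Trace(A)$ for $A,B\succeq0$ (which follows from $B\preceq\lammax(B)I$ and monotonicity of the trace) yields $\innerproduct{\Zstar}{X_-}\le\lammax(X_-)\Trace(\Zstar)=\max\{0,\lammax(-X)\}\Trace(\Zstar)$. Putting the pieces together, $\fp(X)=\innerproduct{C}{X}+\rho\max\{0,\lammax(-X)\}\ge\pstar+(\rho-\Trace(\Zstar))\max\{0,\lammax(-X)\}\ge\pstar$, where the last step uses $\rho>\Trace(\Zstar)$. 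Hence $\fp^\star=\pstar$, and a feasible $X$ is a minimizer iff both inequalities are tight, i.e. iff $\max\{0,\lammax(-X)\}=0$ (equivalently $X\in\snp$) and $\innerproduct{C}{X}=\pstar$; combined with $\Amap(X)=b$, this is exactly membership in $\psolution$. This closes the remaining inclusion, and the dual statement follows by symmetry.

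Main obstacle: there is no deep difficulty — the proof is a weak-duality estimate followed by a one-line eigenvalue/trace inequality. The two points that require care are (i) using the \emph{same} dual optimal solution $(\ystar,\Zstar)$ that determines the threshold $\rho>\Trace(\Zstar)$, so that the penalty coefficient strictly dominates the complementary-slack term; and (ii) noting that the trace bound is tight exactly on $\snp$, which is what upgrades ``$\fp^\star=\pstar$'' to ``the minimizer set equals $\psolution$''. One should also keep in mind that this lemma only asserts equality of \emph{solution sets}; the strictly stronger threshold $\rho>\sup_{(\ystar,\Zstar)\in\dsolution}\Trace(\Zstar)$ flagged in the footnote is needed for the separate statement about duals of the reformulations and plays no role here.
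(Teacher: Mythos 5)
Your proof is correct, but it takes a genuinely different route from the paper's. The paper (in \cref{prop:exact-penalty-fucntion}) argues through optimality conditions: it writes down the dual of \cref{eq:conic-r-p} (which carries the extra constraint $\Trace(Z)\le\rho$), characterizes the solution set via the KKT system as $\mathcal{X}_0\cap(\partial g)^{-1}(-\Zstar)$, and then invokes the preimage identity $(\partial g)^{-1}(-\Zstar)=(\partial\delta_{\snp})^{-1}(-\Zstar)$ from \cref{prop:preimage-conek} — a subdifferential computation the authors present as a contribution in its own right and reuse in \cref{lemma:general-growth-g(x)}. You instead give a direct weak-duality estimate: $\innerproduct{C}{X}=\pstar+\innerproduct{\Zstar}{X}$ on the affine feasible set, followed by the split $X=X_+-X_-$ and the bound $\innerproduct{\Zstar}{X_-}\le\Trace(\Zstar)\max\{0,\lammax(-X)\}$, so that $\fp(X)\ge\pstar+(\rho-\Trace(\Zstar))\max\{0,\lammax(-X)\}$ and equality pins down both $X\in\snp$ and $\innerproduct{C}{X}=\pstar$. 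Your argument is more elementary and closer in spirit to the classical exact-penalty estimate (penalty coefficient dominating the multiplier's trace); what it does not deliver is the preimage characterization \cref{eq:preimage-normal-coneK}, which the paper needs downstream for the quadratic-growth results, so the paper's detour is not wasted effort in context. Two minor points: your phrase ``both inequalities are tight'' is slightly loose — only $(\rho-\Trace(\Zstar))\max\{0,\lammax(-X)\}=0$ is actually forced, after which $g(X)=0$ and $\innerproduct{C}{X}=\pstar$ follow — and, like the paper, your argument implicitly assumes attainment of the primal and dual optima (which is already built into the definition of $\rho$ via a chosen $(\ystar,\Zstar)\in\dsolution$). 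Neither affects correctness.
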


\vspace{-1.5 mm}

The equivalence with an indicator function is obvious, and the equivalence with an exact penalty function has appeared in \cite{ding2023revisiting} and \cite[Propositions 3.1\&3.2]{liao2023overview}. Their proofs rely on a general characterization in exact penalty methods \cite[Theorem 7.21]{ruszczynski2011nonlinear}. In \cref{appendix-sec:exact-penalty}, we present a simple, different, and self-contained proof without relying on prior results. Our proof uses a new characterization of the subdifferential of exact penalty functions 
(\Cref{prop:preimage-conek}), which might be of independent interest.     

Our first main theoretical results are the following growth and error bound properties for primal and conic programs. The proofs will be outlined in \cref{section:proof-growth-error-bound}. 

\begin{theorem}[{Growth properties in the primal}]
    \label{theorem:growth-error-bound}
    Suppose strong duality and dual strict complementarity hold for \cref{eq:primal-conic,eq:dual-conic}.~Consider the primal conic programs \cref{eq:primal-conic} and \cref{eq:conic-r-p}. Let $(\barX^\star,\bary^\star,\barZ^\star)$ satisfy strict complementarity, and choose the exact penalty parameter as $ \rho\! >\! \Trace(\barZ^\star)$. For any compact set $\mathcal{U}\!  \subseteq  \! \sn$ containing an optimal solution $\Xstar \! \in\! \psolution$, 
    there exist  positive constants $\kappa, \gamma,\alpha \!>\! 0$ such that
    \begin{subequations}
        \begin{equation}
            \fp(X) -p^\star + \gamma \|\Amap(X) - b\|  \geq  \kappa \cdot \Dist^2(X,\psolution), \quad \forall X \in \mathcal{U}, \label{eq:error-specific}
        \end{equation}
        \begin{equation}
            \label{eq:error-bound-conic}
        \innerproduct{C}{X} - p^\star  + \gamma \|\Amap(X) - b\| + \alpha \cdot \Dist(X,\snp)  \geq  \kappa \cdot \Dist^2(X,\psolution), \quad \forall X \in \mathcal{U},
        \end{equation}
    \end{subequations}
    If $\psolution$ is further compact, then set $\mathcal{U}$ in \cref{eq:error-bound-conic,eq:error-specific} can be chosen as 
    any sublevel set of $\fp$, i.e., 
    $\{X \in \mathbb{S}^n \mid \fp(X) \leq \beta, \Amap(X) = b \}$ with a finite $\beta$. 
\end{theorem}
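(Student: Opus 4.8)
The plan is to deduce the error bounds \cref{eq:error-specific} and \cref{eq:error-bound-conic} from a single "master" quadratic-growth estimate and then chase the inequalities between the three candidate objective functions $\fp$ with indicator $g=\idsnp$, $\fp$ with exact penalty $g$, and the bare linear objective $\innerproduct{C}{X}$ perturbed by feasibility residuals. The three quantities $\fp(X)-\pstar$, $\innerproduct{C}{X}-\pstar+\rho\max\{0,\lammax(-X)\}-\pstar$, and the indicator-regularized version differ from one another only through the terms $\max\{0,\lammax(-X)\}$ and $\Dist(X,\snp)$, which are equivalent up to absolute constants (for $X\in\mathcal U$, $c_1 \Dist(X,\snp)\le \max\{0,\lammax(-X)\}\le c_2\Dist(X,\snp)$ on any bounded set, since the max-eigenvalue-violation controls the Frobenius distance to $\snp$ up to a factor $\sqrt n$). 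So it suffices to establish one of the two displays, and I would start with \cref{eq:error-specific} since $\fp$ is the object that naturally appears in the ALM/PPM analysis.

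The core step is to invoke the new characterization of the preimage of the subdifferential of the exact penalty function promised in \cref{lemma:general-growth-g(x)} / \cref{eq:pre-image-exact-penalty} together with dual strict complementarity. Concretely: fix a strictly complementary triple $(\barX^\star,\bary^\star,\barZ^\star)$ with $\rho>\Trace(\barZ^\star)$. First I would prove a \emph{local} quadratic growth of $\fp$ plus a linear feasibility term on a small neighborhood of $\psolution$ — this is essentially the content of \cite{cui2016asymptotic} (or can be re-derived from the subdifferential characterization), exploiting that $\barZ^\star$ together with $\bary^\star$ furnishes a Lagrange multiplier certifying that the first-order optimality condition holds in a robust (strict-complementarity) way, which upgrades metric subregularity of the KKT map to a genuine quadratic growth bound. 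Then I would pass from "small unknown neighborhood" to "arbitrary compact $\mathcal U$" by a standard compactness-plus-homogeneity argument: on $\mathcal U\setminus(\text{small nbhd of }\psolution)$ the left-hand side of \cref{eq:error-specific} is bounded below by a positive constant (by continuity, closedness, and the fact that it vanishes exactly on $\psolution$), while $\Dist^2(X,\psolution)$ is bounded above on $\mathcal U$; shrinking $\kappa$ if necessary merges the two regimes. The compact solution set $\psolution$ enters here to guarantee the "away" region is itself compact and that $\Dist(\cdot,\psolution)$ is finite-valued and continuous.

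For the final sentence — replacing $\mathcal U$ by a sublevel set $\{X : \fp(X)\le\beta,\ \Amap(X)=b\}$ when $\psolution$ is compact — I would argue that under primal Slater (equivalently here dual boundedness) every such sublevel set is bounded, hence compact, since $\fp$ is a closed convex function with no directions of recession on the affine feasible set once $\psolution$ is compact (coercivity of $\fp$ along the feasible affine subspace follows from compactness of its minimizer set for a closed convex function). Then the sublevel set is a legitimate choice of $\mathcal U$ and the previously proved bound applies verbatim.

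The main obstacle is the passage from the intrinsic, small, unspecified neighborhood in which strict complementarity yields quadratic growth to a \emph{quantitative} bound valid on all of a prescribed compact set with the feasibility term $\gamma\|\Amap(X)-b\|$ appearing linearly — in particular, making sure that the linear (rather than quadratic) feasibility penalty is strong enough to dominate the cross terms when $X$ is far from feasible. I expect the delicate point to be handling the interaction between the nonsmooth term $g(X)$ (or equivalently $\Dist(X,\snp)$) and the feasibility residual simultaneously: one wants $\Dist(X,\psolution)^2 \lesssim \Dist(X,\snp)^2 + \Dist(X,\{\Amap(\cdot)=b\})^2 + (\fp(X)-\pstar)$, and upgrading the square on the feasibility distance to a first power of $\|\Amap(X)-b\|$ on a bounded set is fine, but the constants must be tracked through Hoffman-type bounds for the affine system and through the eigenvalue-to-Frobenius comparison. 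This is exactly where the new subdifferential characterization does the heavy lifting and where I would spend most of the detailed argument.
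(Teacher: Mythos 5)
Your high-level plan is close in spirit to the paper's, but the step that actually produces the inequality near $\psolution$ is deferred to a citation that does not deliver it, and the patch you propose for extending it rests on an unjustified claim. Concretely, the local estimate you need is
\begin{equation*}
\fp(X)-\pstar+\gamma\|\Amap(X)-b\|\ \geq\ \kappa\,\Dist^2(X,\psolution)
\end{equation*}
for $X$ near $\psolution$ that may be affine-infeasible (and, in the penalty case, outside $\snp$). The result of Cui et al.\ you invoke is stated only for affine-feasible points, so it contains no $\gamma\|\Amap(X)-b\|$ term; re-deriving the needed version is exactly the content of the theorem, and ``upgrading metric subregularity of the KKT map'' is not yet an argument. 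The mechanism the paper actually uses, and which your sketch never names, is the identity $\psolution=\{X\mid\Amap(X)=b\}\cap(\partial g)^{-1}(-\barZ^\star)$ combined with \emph{bounded linear regularity} of this two-set intersection (valid because the affine set is polyhedral and dual strict complementarity places $\barX^\star$ in $\relint\bigl((\partial g)^{-1}(-\barZ^\star)\bigr)$; see \cref{prop:blr-sufficient}). This splits $\Dist^2(X,\psolution)\lesssim\|\Amap(X)-b\|^2+\Dist^2\bigl(X,(\partial g)^{-1}(-\barZ^\star)\bigr)$, and the second term is absorbed by the growth estimate \cref{eq:penalty-GQG-primal-new} for $g$, which---crucially---holds on balls of \emph{arbitrary} finite radius. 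That feature is what lets the paper dispense with your two-regime compactness argument altogether.

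Second, your dichotomy on $\{X\in\mathcal U:\Dist(X,\psolution)\ge\epsilon\}$ rests on the assertion that $\fp(X)-\pstar+\gamma\|\Amap(X)-b\|$ ``vanishes exactly on $\psolution$.'' As stated this is false for generic $\gamma$: writing $C=\Ajmap(\ystar)+\Zstar$ gives $\fp(X)-\pstar=\innerproduct{\ystar}{\Amap(X)-b}+\innerproduct{\Zstar}{X}+g(X)$, so the left-hand side can vanish at affine-infeasible points where $\Amap(X)-b$ is antiparallel to $\ystar$, unless one takes $\gamma$ strictly larger than $\|\ystar\|$ and, in the penalty case, uses $\rho>\Trace(\barZ^\star)$ to guarantee $g(X)+\innerproduct{\barZ^\star}{X}\geq(\rho-\Trace(\barZ^\star))\max\{0,\lammax(-X)\}\geq 0$ with equality only on $\snp$. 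This is fixable, but it must be argued---it is precisely where the dual multiplier and the penalty threshold enter---and your sketch treats it as automatic; you would also need a finite subcover of the relevant compact portion of $\psolution$ to make the ``near'' regime uniform. The remaining ingredients (the comparison $\max\{0,\lammax(-X)\}\leq\Dist(X,\snp)$ to pass from \cref{eq:error-specific} to \cref{eq:error-bound-conic} with $\alpha=\rho$, and the equivalence between compactness of $\psolution$ and boundedness of the sublevel sets of $\fp$) do match the paper.
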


In \cref{eq:error-specific}, the impact of the conic constraint $X\in\snp$ is reflected in the reformulated cost $\fp(X)$. When using the indicator function as $\fp(X) = \innerproduct{C}{X} + \delta_{\snp}(X)$, the left-hand side of \cref{eq:error-specific} implicitly requires $X \in \snp$; otherwise $\fp(X) = +\infty$ and thus the inequality \cref{eq:error-specific} holds trivially. If using the exact penalty function \cref{eq:exact-penalty-function}, we have $\fp(X) = \innerproduct{C}{X} + g(X) < +\infty$ for $X\notin \snp$, which is closer to \cref{eq:error-bound-conic}. Indeed, our proof of \cref{eq:error-bound-conic} utilizes a characterization of the exact penalty function \cref{eq:exact-penalty-function}.  

Both \cref{eq:error-specific,eq:error-bound-conic} give useful bounds when $X$ moves away from the optimal solution set $\psolution$ in terms of suboptimality $\fp(X) -p^\star $ or $\innerproduct{C}{X} - p^\star$, affine feasibility $\|\Amap(X) - b\|$, and conic feasibility $\Dist(X,\snp)$. They are closely related to \textit{quadratic growth} in \cref{eq:QG}, and are also referred to \textit{error bound} properties \cite{pang1997error,zhang2000global,ding2023strict}. The error bound plays a vital role in proving fast linear convergence for many iterative algorithms \cite{drusvyatskiy2018error,zhou2017unified,liao2023error,necoara2019linear}. Indeed, some previous studies \cite{cui2016asymptotic,ding2023strict,liao2023overview,ding2023revisiting} have established similar results to \Cref{theorem:growth-error-bound} for conic optimization, especially SDPs. As we will detail in \cref{remark:contributions} below, our results are more general and unified. 
Similarly, the growth and error bound properties also hold for the dual conic program.

\begin{theorem}[{Growth properties in the dual}]
    \label{theorem:growth-error-bound-dual}
    Suppose strong duality and dual strict complementarity hold for \cref{eq:primal-conic,eq:dual-conic}. 
Consider the dual problems \cref{eq:dual-conic} and \cref{eq:conic-r-d}.~Let $(\barX^\star,\bary^\star,\barZ^\star)$ satisfy strict complementarity, and the penalty parameter is chosen as $\rho \! > \! \Trace(\barX^\star)$. For any compact set $\mathcal{V} \!\subseteq \! \RR^m \! \times \! \sn$ containing an optimal solution $(\ystar,\Zstar)  \in   \dsolution$, 
    there exist constants $\kappa, \gamma,\alpha  >  0$ such that
    \begin{subequations}
        \begin{equation}
            \fd(y,Z) -(- d^\star) + \gamma \|C - \Ajmap(y) - Z\|  \geq  \kappa \cdot \Dist^2((y,Z),\dsolution), \quad \forall (y,Z) \in \mathcal{V}, \label{eq:error-specific-dual}
        \end{equation}
        \begin{equation}
            \label{eq:error-bound-conic-dual}
        d^\star-\innerproduct{b}{y}   + \gamma \|C - \Ajmap(y) - Z\| + \alpha \cdot \Dist(Z,\snp)  \geq  \kappa \cdot \Dist^2((y,Z),\dsolution), \quad \forall (y,Z) \in \mathcal{V}.
        \end{equation}
    \end{subequations}
    If $\dsolution$ is further compact,  then set $\mathcal{V}$ can be chosen as
    any sublevel set of $\fd$, i.e., 
    $ \{(y,Z) \in \RR^m \times \sn \mid \fd(y,Z) \leq \beta, C+ Z  = \Ajmap(y) \}$ with a finite $\beta$.
\end{theorem}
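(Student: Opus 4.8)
The plan is to derive the dual growth properties from the primal ones in Theorem~\ref{theorem:growth-error-bound} by exploiting the symmetry between \eqref{eq:primal-conic}--\eqref{eq:conic-r-p} and \eqref{eq:dual-conic}--\eqref{eq:conic-r-d}. The key observation is that the dual conic program \eqref{eq:dual-conic} is itself a conic program in standard form: after eliminating $Z = C - \Ajmap(y)$ (using that $\Ajmap$ is injective, so $(y,Z)$ are in bijection), maximizing $\innerproduct{b}{y}$ subject to $C - \Ajmap(y) \in \snp$ is the same shape of problem as \eqref{eq:primal-conic}, and its own Lagrangian dual recovers \eqref{eq:primal-conic}. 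Under strong duality and dual strict complementarity, one checks that the roles of $\Xstar$ and $\Zstar$ swap: the hypothesis $\barX^\star \in \relint(\mathcal{N}_{(\snp)^\circ}(-\barZ^\star))$ is, because $\snp$ is self-dual, equivalent to the corresponding strict complementarity condition for the dual problem viewed as the ``primal'' (this equivalence for self-dual cones is already noted after Definition~\ref{def:dual-sc-conic}). Hence all the structural hypotheses of Theorem~\ref{theorem:growth-error-bound} are met for the dual problem, with $\Trace(\barX^\star)$ playing the role of $\Trace(\barZ^\star)$, which is exactly why the penalty parameter bound becomes $\rho > \Trace(\barX^\star)$.

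Concretely, I would first set up the change of variables carefully: write the dual problem as $\min_{y} \, \innerproduct{-b}{y}$ over the affine-image constraint $\Ajmap(y) \preceq C$, identify its data $(\tilde C, \tilde b, \tilde{\Amap})$ with that of a standard primal SDP, and verify that the solution set of this reformulation corresponds to $\dsolution$ via Lemma~\ref{eq:equivalence-penalty} and the bijection $Z \leftrightarrow y$. Second, I would verify that strong duality and strict complementarity transfer: strong duality is symmetric by $\pstar = \dstar$, and for strict complementarity I would invoke the self-duality of $\snp$ together with the equivalence of the two formulations of strict complementarity recorded in \cite[Appendix B]{ding2023strict}. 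Third, I would apply Theorem~\ref{theorem:growth-error-bound} verbatim to the reformulated problem on a compact set $\mathcal{V}$ containing an optimal $(\ystar,\Zstar)$, and then translate the resulting inequalities back: the suboptimality term becomes $\fd(y,Z) - (-d^\star)$ or $d^\star - \innerproduct{b}{y}$, the affine-feasibility residual becomes $\|C - \Ajmap(y) - Z\|$, and the conic-feasibility term becomes $\Dist(Z,\snp)$, giving exactly \eqref{eq:error-specific-dual} and \eqref{eq:error-bound-conic-dual}. The final claim about sublevel sets when $\dsolution$ is compact follows directly from the analogous statement in Theorem~\ref{theorem:growth-error-bound}, since sublevel sets of $\fd$ on the affine constraint set are compact when the solution set is.

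The main obstacle I anticipate is purely bookkeeping rather than conceptual: making the ``$Z = C - \Ajmap(y)$'' substitution interact cleanly with the distance functions, since $\Dist^2((y,Z),\dsolution)$ is a distance in the joint $(y,Z)$ space while the primal theorem naturally produces a distance in the reduced $y$-variable. Here the injectivity of $\Ajmap$ is essential: it gives two-sided norm equivalence between $\|(y - \ystar, Z - \Zstar)\|$ and $\|y - \ystar\|$ on the affine feasible set (with constants depending on the smallest singular value of $\Ajmap$), so the quadratic growth in $y$ and the quadratic growth in $(y,Z)$ differ only by absorbing harmless multiplicative constants into $\kappa$. A second minor subtlety is matching the penalty-function conventions — confirming that the exact penalty $h(Z) = \rho\max\{0,\lammax(-Z)\}$ for the dual is indeed the image of the penalty \eqref{eq:exact-penalty-function} applied to the reformulated primal, which again is immediate since the reformulated primal's variable is exactly $Z$. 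Once these identifications are in place, no new analysis is needed and the theorem follows from Theorem~\ref{theorem:growth-error-bound}.
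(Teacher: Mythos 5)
Your proposal is essentially correct, but it takes a genuinely different route from the paper. The paper does \emph{not} derive \cref{theorem:growth-error-bound-dual} by reducing \cref{eq:conic-r-d} to a standard-form primal and invoking \cref{theorem:growth-error-bound}; instead, \cref{thm:QG-p} proves the primal and dual growth bounds in parallel, running the same argument directly in the joint $(y,Z)$ space (see \cref{subsec:proof-thm:QG-p}): the dual solution set is written as $\dsolution = \mathcal{Z}_0 \cap (\RR^m \times (\partial h)^{-1}(-\bXstar))$ with $\mathcal{Z}_0 = \{(y,Z) \mid C - \Ajmap(y) = Z\}$, bounded linear regularity splits $\Dist^2((y,Z),\dsolution)$ into the affine residual plus $\Dist^2((y,Z),\RR^m\times(\partial h)^{-1}(-\bXstar))$ as in \cref{eq:dist-split-dual}, and \cref{lemma:general-growth-g(x)} is applied with the roles of $\barX$ and $\barZ$ swapped (the lemma is symmetric in the two PSD matrices, which is exactly why $\rho > \Trace(\bXstar)$ is the right threshold). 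Your duality-based reduction buys economy --- no argument is repeated --- while the paper's direct proof buys explicit constants and avoids the change-of-variables bookkeeping entirely. If you pursue your route, three points need more care than you give them. First, eliminating $Z = C - \Ajmap(y)$ leaves a problem in $y$ with an affine-\emph{image} PSD constraint, which is not the shape of \cref{eq:primal-conic}; to match the standard form you must instead keep $Z$ as the matrix variable, describe the affine set $C - \mathrm{range}(\Ajmap)$ by equality constraints, and express $-\innerproduct{b}{y}$ as a linear functional of $Z$ via injectivity of $\Ajmap$. Second, your norm equivalence between $\|(y-\ystar,Z-\Zstar)\|$ and the reduced-variable distance is stated ``on the affine feasible set,'' but the theorem quantifies over \emph{all} $(y,Z)$ in a compact set, including affine-infeasible points where $d^\star - \innerproduct{b}{y}$ may even be negative; you must project onto $\mathcal{Z}_0$ first and absorb the resulting discrepancies (including the squared residual, which is dominated by the residual on a compact set) into the $\gamma\|C-\Ajmap(y)-Z\|$ term. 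Third, you should verify that the Lagrangian dual of the reformulated problem recovers \cref{eq:primal-conic} with slack variable $\Xstar$, so that the penalty threshold demanded by \cref{theorem:growth-error-bound} really is $\Trace(\barX^\star)$. All three are fixable bookkeeping, not conceptual gaps.
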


The compactness of $\psolution$ and $\dsolution$ in \Cref{theorem:growth-error-bound-dual,theorem:growth-error-bound} is not restrictive, as it can be ensured by the dual and primal Slater's conditions (and $\Amap^*$ is injective) respectively; see e.g., \cite[Proposition 2.1]{liao2023overview}.

\begin{remark} \label{remark:contributions}
Our results in \Cref{theorem:growth-error-bound-dual,theorem:growth-error-bound} are more general and unified than previous ones~\cite{cui2016asymptotic,ding2023strict,liao2023overview,ding2023revisiting}. \Cref{theorem:growth-error-bound-dual,theorem:growth-error-bound} hold for both the indicator function and the penalty function \cref{eq:exact-penalty-function}, while all previous results \cite{cui2016asymptotic,ding2023strict,liao2023overview,ding2023revisiting} have only discussed either one of them. We provide more comparisons below. The guarantees in \cref{eq:error-specific} and \cref{eq:error-bound-conic-dual} directly  lead to a \textit{quadratic growth} property of $\fp$ and $\fd$:
\begin{subequations} \label{eq:QG-conic-programs}
    \begin{align}
        \fp(X) -p^\star  &\! \geq \! \kappa \!\cdot \Dist^2(X,\psolution), \;\forall X \in \mathcal{U} \cap \{X  \in \sn \! \mid \! \Amap(X) = b\},  \label{eq:QG-conic-programs-primal} \\
        \fd(y,Z) + d^\star & \! \geq \! \kappa \!\cdot \!\Dist^2((y,Z),\dsolution),  \forall (y,Z) \in \mathcal{V} \cap \{(y,Z) \! \in \! \RR^m \! \times \! \sn \!\mid\! \Amap^*(y)+Z = C\}. \label{eq:QG-conic-programs-dual}  
    \end{align}
\end{subequations}
When considering indicator functions, this property \cref{eq:QG-conic-programs} is more general than \cite[Corollary 3.1]{cui2016asymptotic} since we allow for any compact set $\mathcal{U}$ (and also any sublevel set of $\fp$ or $\fd$). In contrast, the size of the neighborhood around an optimal solution $\Xstar$ in \cite[Corollary 3.1]{cui2016asymptotic} depends on the eigenvalues of an optimal dual solution $\Zstar$. Thus the neighborhood only exists but its size might be unknown. Furthermore, our results \cref{eq:error-specific,eq:error-specific-dual} allow for the residual of the affine constraints $\Amap(X) = b$ and $C - \Ajmap(y) = Z$, while \cite[Corollary 3.1]{cui2016asymptotic} requires strict affine feasible points. 

When considering exact penalty functions, similar quadratic growth properties appear in \cite{liao2023overview,ding2023revisiting}. However, the penalty $\rho$ in \cite{liao2023overview,ding2023revisiting} needs to be as large as $ \rho \geq 2 \sup_{(\ystar,\Zstar) \in \dsolution} \Trace(\Zstar)$. Our results in \cref{theorem:growth-error-bound,theorem:growth-error-bound-dual} only require a smaller constant as $\rho > \Trace(\barZ^\star)$ where $\barZ^\star$ is an optimal dual solution satisfying strict complementarity, 
and this result is established via a new and simple argument (see \Cref{prop:exact-penalty-fucntion} in the appendix). Finally, an error bound  similar to \cref{eq:error-bound-conic}
 was also established in 
\cite[Corollary 1]{ding2023strict} under similar assumptions of strong duality and strict complementarity. Our error bound \cref{eq:error-bound-conic} does not require the absolute value of $\fp(X) -p^\star$, and our proof strategy relies on a new characterization of exact penalty functions (see \Cref{lemma:general-growth-g(x)}), which offers a new (possibly simpler) perspective than \cite[Corollary 1]{ding2023strict}. The dual property \cref{eq:error-bound-conic-dual} is new and not discussed in \cite{ding2023strict}. \hfill $\square$
\end{remark} 
\vspace{1 mm}
\begin{remark}[Conic programs]
    \cref{theorem:growth-error-bound,theorem:growth-error-bound-dual} are stated specifically for SDPs \cref{eq:primal-conic,eq:dual-conic}. It is known that linear programs (LP) and second-order cone programs (SOCP) are special cases of SDPs. Therefore, results similar to \cref{theorem:growth-error-bound,theorem:growth-error-bound-dual} when replacing $\snp$ by nonnegative orthant or second-order cone also exist. \cref{theorem:growth-error-bound,theorem:growth-error-bound-dual} hold for a general class of conic programs. 
    \qed
\end{remark}

\subsection{Proof sketches for \Cref{theorem:growth-error-bound-dual,theorem:growth-error-bound}} \label{section:proof-growth-error-bound}

We here provide key proof sketches for \Cref{theorem:growth-error-bound-dual,theorem:growth-error-bound}. One key step is the following growth properties of an indicator function and a penalty function.
\begin{lemma}
    \label{lemma:general-growth-g(x)} 
    Let $\barX, \barZ \in \snp $ satisfying $\innerproduct{\barX}{\barZ} = 0$ (i.e., both $\barX$ and $\barZ$ are on the boundary unless one of them is zero). If $l : \sn \to \Bar{\RR}$ is either an indicator function $
    l = \delta_{\snp}, 
    $ or a penalty function 
    \begin{equation}
    \label{eq:exact-penalty-function-cone}
    \begin{aligned}
        l(X) = \rho \max\{0,\lammax(-X)\}\;~\text{with}\;\rho >  \Trace(\barZ),
    \end{aligned}
\end{equation}
    then we have that 
    \begin{equation}
    \label{eq:pre-image-exact-penalty}
        \begin{aligned}
            (\partial l)^{-1}(-\barZ) = (\partial \delta_{\snp})^{-1}(-\barZ) 
    = \mathcal{N}_{(\snp)^\circ}(-\barZ).
        \end{aligned}
    \end{equation}
    Moreover,
    for any positive value $\mu  \in (0,\infty)$,      
    there exists a positive constant $\kappa $ such that
        \begin{equation}
            \label{eq:penalty-GQG-primal-new}
            l(X) \geq l(\barX) + \innerproduct{-\barZ}{X -\barX} + \kappa \cdot \Dist^2\left(X,(\partial l)^{-1}(-\barZ)\right),\quad \forall X \in \mathbb{B}(\barX,\mu).
        \end{equation} 
\end{lemma}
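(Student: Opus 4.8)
\emph{Plan.} I would argue in three stages: (i) prove the preimage identity \cref{eq:pre-image-exact-penalty}; (ii) recast \cref{eq:penalty-GQG-primal-new} as a quadratic growth statement for the convex function $\phi := l + \innerproduct{\barZ}{\,\cdot\,}$; (iii) prove that quadratic growth, with the exact-penalty case reduced to the indicator case. For (i): when $l=\delta_{\snp}$ use $(\partial l)^{-1}=\partial l^{*}$ and $\delta_{\snp}^{*}=\delta_{(\snp)^{\circ}}$ to get $(\partial\delta_{\snp})^{-1}(-\barZ)=\mathcal{N}_{(\snp)^{\circ}}(-\barZ)$ (licit since $-\barZ\in\snn=(\snp)^{\circ}$). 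When $l$ is the penalty, rewrite it as a support function: from $\lammax(M)=\max\{\innerproduct{M}{V}:V\succeq 0,\,\Trace V=1\}$ one gets $\max\{0,\lammax(-X)\}=\max\{\innerproduct{-X}{V}:V\succeq 0,\,\Trace V\le 1\}$, so $l=\sigma_{\mathcal{C}}$ with $\mathcal{C}=\{W\preceq 0:\Trace W\ge-\rho\}$, whence $(\partial l)^{-1}(-\barZ)=\mathcal{N}_{\mathcal{C}}(-\barZ)$. Since $\rho>\Trace(\barZ)$ forces $\Trace(-\barZ)>-\rho$ \emph{strictly}, the trace constraint is inactive at $-\barZ$, so $\mathcal{C}$ agrees with $\snn$ on a small ball around $-\barZ$; normal cones being local, $\mathcal{N}_{\mathcal{C}}(-\barZ)=\mathcal{N}_{\snn}(-\barZ)=\mathcal{N}_{(\snp)^{\circ}}(-\barZ)$. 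I would also record the explicit form $\mathcal{N}_{(\snp)^{\circ}}(-\barZ)=\{X\succeq 0:\innerproduct{X}{\barZ}=0\}=\snp\cap\barZ^{\perp}$, which contains $\barX$.

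For (ii): set $\phi(X):=l(X)+\innerproduct{\barZ}{X}$, a proper closed convex function. In both cases $l(\barX)=0$ (as $\barX\succeq 0$) and $\innerproduct{\barZ}{\barX}=0$, so $\phi(\barX)=0$; and $\barX\in\snp\cap\barZ^{\perp}=(\partial l)^{-1}(-\barZ)$ means $-\barZ\in\partial l(\barX)$, i.e.\ $0\in\partial\phi(\barX)$. Hence $\barX$ minimises $\phi$ with value $0$ and $\argmin\phi=(\partial\phi)^{-1}(0)=(\partial l)^{-1}(-\barZ)=\snp\cap\barZ^{\perp}$, so \cref{eq:penalty-GQG-primal-new} is exactly the quadratic growth bound $\phi(X)\ge\kappa\cdot\Dist^{2}(X,\argmin\phi)$ for $X\in\mathbb{B}(\barX,\mu)$.

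For (iii), first take $l=\delta_{\snp}$, where the target reads $\innerproduct{\barZ}{X}\ge\kappa_{0}\,\Dist^{2}(X,\snp\cap\barZ^{\perp})$ for $X\in\snp\cap\mathbb{B}(\barX,\mu)$. Diagonalising $\barZ$ and grouping coordinates by zero vs.\ positive eigenvalue, $\innerproduct{\barZ}{X}\ge\lamminlp(\barZ)\,\Trace(X_{11})$ for the corresponding principal block $X_{11}$, while the $2\times2$ positive-semidefinite minors $|X_{ij}|\le\sqrt{X_{ii}X_{jj}}$ give $\Dist^{2}(X,\snp\cap\barZ^{\perp})\le\Fnorm{X_{11}}^{2}+2\Fnorm{X_{12}}^{2}\le c(\mu)\,\Trace(X_{11})$, with $c(\mu)$ depending on $\mu$ only through $\Fnorm{X}\le\Fnorm{\barX}+\mu$; dividing gives $\kappa_{0}$. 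For the penalty case, decompose $X=X_{+}+X_{-}$ with $X_{+}:=\Pi_{\snp}(X)\succeq 0$, $X_{-}\preceq 0$; then $\max\{0,\lammax(-X)\}=\lammax(-X_{-})\ge\Fnorm{X_{-}}/\sqrt n=\Dist(X,\snp)/\sqrt n$, and using $\innerproduct{\barZ}{X_{+}}\ge 0$ and $\innerproduct{\barZ}{-X_{-}}\le\lammax(-X_{-})\Trace(\barZ)$,
\[
  \phi(X)\ \ge\ \bigl(\rho-\Trace(\barZ)\bigr)\lammax(-X_{-})+\innerproduct{\barZ}{X_{+}}\ \ge\ \tfrac{\rho-\Trace(\barZ)}{\sqrt n}\,\Dist(X,\snp)+\kappa_{0}\,\Dist^{2}\!\bigl(X_{+},\snp\cap\barZ^{\perp}\bigr),
\]
the last bound being the indicator estimate at $X_{+}$ (admissible since $\Fnorm{X_{+}-\barX}\le\Fnorm{X-\barX}\le\mu$ by nonexpansiveness of $\Pi_{\snp}$). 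Finally $\Dist(X,\argmin\phi)\le\Dist(X,\snp)+\Dist(X_{+},\argmin\phi)$ together with $\Dist^{2}(X,\snp)\le\mu\,\Dist(X,\snp)$ (as $\barX\in\snp$) turns the right-hand side into $\kappa\cdot\Dist^{2}(X,\argmin\phi)$ with $\kappa=\bigl(\tfrac{2\mu\sqrt n}{\rho-\Trace(\barZ)}+\tfrac{2}{\kappa_{0}}\bigr)^{-1}>0$; the strict inequality $\rho>\Trace(\barZ)$ is precisely what keeps this positive, mirroring its role in (i).

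\emph{Expected main obstacle.} The delicate part is the indicator-function growth in step (iii): upgrading the one-sided linear bound $\innerproduct{\barZ}{X}\ge 0$ on $\snp$ to a genuinely \emph{quadratic} lower bound on the distance to the face $\snp\cap\barZ^{\perp}$, uniformly over \emph{every} bounded set (with a $\mu$-dependent but finite constant) rather than over some unspecified neighbourhood. The $2\times2$-minor estimate achieves this, but one must verify the relevant block traces of $X$ stay controlled on $\mathbb{B}(\barX,\mu)$; by comparison steps (i)--(ii) are routine convex duality, and the penalty-to-indicator reduction is short once $\rho>\Trace(\barZ)$ is used.
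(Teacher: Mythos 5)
Your proposal is correct, and it splits into a part that genuinely departs from the paper and a part that essentially reproduces it. For the preimage identity \cref{eq:pre-image-exact-penalty}, the paper (\cref{prop:preimage-conek,prop:inverse-g} in \cref{appendix-sec:exact-penalty}) computes $\partial l$ explicitly via Overton's formula for $\partial\lammax$ and then verifies $(\partial l)^{-1}(-\barZ)=\mathcal{N}_{(\snp)^\circ}(-\barZ)$ by matching eigendecompositions; your route---writing $l$ as the support function of $\mathcal{C}=\{W\preceq 0:\Trace(W)\geq-\rho\}$, so that $(\partial l)^{-1}(-\barZ)=\mathcal{N}_{\mathcal{C}}(-\barZ)$, and then noting that $\rho>\Trace(\barZ)$ makes the trace constraint inactive at $-\barZ$, whence $\mathcal{N}_{\mathcal{C}}(-\barZ)=\mathcal{N}_{\snn}(-\barZ)$ by locality of normal cones---is shorter and makes transparent exactly where the strict inequality $\rho>\Trace(\barZ)$ enters; the paper's computation has the advantage of producing the explicit face $\{P_2BP_2^\tr:B\succeq 0\}$ as a by-product, which it reuses in the distance calculation. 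For the growth bound \cref{eq:penalty-GQG-primal-new}, your strategy coincides with the paper's (\cref{lemma:GQ-indicator} and \cref{appendix-sec:detail-general-growth}): first prove $\innerproduct{\barZ}{X}\geq\kappa_0\,\Dist^2(X,\snp\cap\barZ^\perp)$ on $\snp\cap\mathbb{B}(\barX,\mu)$ in the eigenbasis of $\barZ$, then reduce the penalty case to the indicator case by projecting onto $\snp$ and absorbing the $\Dist(X,\snp)$ term with the surplus $\rho-\Trace(\barZ)>0$. The only differences are cosmetic: you control the off-diagonal block by $2\times 2$ PSD minors, $\Fnorm{X_{12}}^2\leq\Trace(X_{11})\Trace(X_{22})$, where the paper uses $\Fnorm{X_{12}}^2\leq\|X_{22}\|_{\mathrm{op}}\Trace(X_{11})$ together with the generalized Cauchy--Schwarz bound $\Fnorm{X_{11}}^2\leq\|X_{11}\|_{\mathrm{op}}\Trace(X_{11})$; both yield a constant depending only on $\mu$, $\Fnorm{\barX}$, $n$ and the smallest positive eigenvalue of $\barZ$. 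If you write this up, make two trivial points explicit: the degenerate case $\barZ=0$ (where that eigenvalue is undefined but both sides of the indicator bound vanish, and the penalty bound follows from $\Dist^2(X,\snp)\leq\mu\,\Dist(X,\snp)$ alone, exactly as the paper treats it separately), and the fact that $X_{22}\succeq 0$ for $X\in\snp$, which is what makes $P_2X_{22}P_2^\tr$ a feasible point of the face in your distance estimate.
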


Note that $l$ in \cref{eq:exact-penalty-function-cone} is related to, but not identical to, the exact penalty function in \cref{eq:exact-penalty-function}.
The~proof of \cref{lemma:general-growth-g(x)} heavily exploits the nice self-dual structure of 
the cone $\snp$. For better clarity, we provide the proofs of \cref{eq:pre-image-exact-penalty} and \cref{eq:penalty-GQG-primal-new} in \cref{prop:preimage-conek,appendix-sec:detail-general-growth} respectively.

\begin{remark}
    We believe that the results in \Cref{lemma:general-growth-g(x)} are of independent interest, especially in terms of the penalty function \Cref{eq:exact-penalty-function-cone}. Building upon \cref{eq:penalty-GQG-primal-new}, one can further show $l$ is \textit{metrically subregular} \cite[Theorem 3.3]{artacho2008characterization}. The most closely related characterization is \cite[Proposition 3.3]{cui2016asymptotic}, which focuses solely on the indicator function of the cone $\snp$. However, even in this scenario, the result in \cite[Proposition 3.3]{cui2016asymptotic} only guarantees the existence of a small neighborhood around $\barX$. In contrast, our result \cref{eq:penalty-GQG-primal-new} works for any closed ball $\mathbb{B}(\barX,\mu)$ around $\barX$ with a finite radius $\mu > 0$.
    \hfill $\square$
\end{remark}

Upon establishing \cref{lemma:general-growth-g(x)}, with bounded linear regularity (see \Cref{appendix:BLR}) that is guaranteed by dual strict complementarity,  we can establish the following growth properties for \cref{eq:conic-r-p,eq:conic-r-d}. 

\begin{proposition}
    \label{thm:QG-p}
    Consider primal and dual conic programs \cref{eq:primal-conic,eq:dual-conic} and their equivalent forms in \cref{eq:conic-r-p,eq:conic-r-d} respectively.
    Suppose strong duality and dual strict complementarity hold for \cref{eq:primal-conic,eq:dual-conic}. Let $(\Xstar,\ystar,\Zstar)$ be a pair of primal and dual optimal solutions, i.e., $\Xstar \in \psolution$ and $(\ystar,\Zstar) \in \dsolution$. 
    For any $\mu > 0$, there exist positive constants $\kappa_1,\gamma_1,\kappa_2,\gamma_2$ such that 
\begin{equation*}
    \begin{aligned}
        \fp(X) - \fp^\star + \gamma_1 \|\Amap(X) - b\| & \geq  \kappa_1 \cdot \Dist^2(X,\psolution), \; \forall X \in \mathbb{B}(\Xstar,\mu), \label{eq:error-specific-new} \\
        \fd(y,Z) -\fd^\star + \gamma_2 \|C - \Ajmap(y) - Z\| & \geq  \kappa_2  \cdot\Dist^2((y,Z),\dsolution), \;  \forall (y,Z) \in \mathbb{B}((\ystar,\Zstar),\mu).
    \end{aligned}
\end{equation*}
\end{proposition}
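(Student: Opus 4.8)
The plan is to instantiate \cref{lemma:general-growth-g(x)} at a fixed primal--dual optimal triple, convert the resulting one‑sided, subgradient‑type growth inequality into a bound expressed through suboptimality plus affine infeasibility, and then replace the distance to the preimage $(\partial g)^{-1}(-\Zstar)$ by the distance to $\psolution$ using bounded linear regularity. I will write out the primal inequality; the dual one is entirely symmetric, playing $h$ against $g$, $(y,Z)$ against $X$, the residual $\|C-\Ajmap(y)-Z\|$ against $\|\Amap(X)-b\|$, and the penalty threshold $\rho>\Trace(\Xstar)$ against $\rho>\Trace(\Zstar)$.

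First I fix the optimal pair $(\Xstar,\ystar,\Zstar)$ and record the KKT relations $\Zstar=C-\Ajmap(\ystar)$, $\Amap(\Xstar)=b$, $\innerproduct{\Xstar}{\Zstar}=0$, $\Xstar\in\snp$, $\Zstar\in\snp$. By \cref{eq:pre-image-exact-penalty} in \cref{lemma:general-growth-g(x)} one has $(\partial g)^{-1}(-\Zstar)=(\partial\delta_{\snp})^{-1}(-\Zstar)=\mathcal{N}_{(\snp)^\circ}(-\Zstar)$, and this set is the face $\{X\in\snp:\innerproduct{X}{\Zstar}=0\}$ of $\snp$ exposed by $\Zstar$; in particular $\Xstar$ belongs to it, so $-\Zstar\in\partial g(\Xstar)$. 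By \cref{eq:equivalence-penalty} and strong duality, $\fp^\star=\pstar=\dstar=\innerproduct{b}{\ystar}$ and the minimizers of \cref{eq:conic-r-p} coincide with $\psolution$.

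Next I apply \cref{eq:penalty-GQG-primal-new} with $\barX=\Xstar$, $\barZ=\Zstar$ and the given radius $\mu$, obtaining $\kappa>0$ with $g(X)\ge g(\Xstar)+\innerproduct{-\Zstar}{X-\Xstar}+\kappa\,\Dist^2(X,\mathcal{N}_{(\snp)^\circ}(-\Zstar))$ for all $X\in\mathbb{B}(\Xstar,\mu)$. Adding $\innerproduct{C}{X}$ and using $g(\Xstar)=0$, $C-\Zstar=\Ajmap(\ystar)$, $\innerproduct{\Zstar}{\Xstar}=0$ gives $\fp(X)\ge\innerproduct{\ystar}{\Amap(X)}+\kappa\,\Dist^2(X,\mathcal{N}_{(\snp)^\circ}(-\Zstar))$; subtracting $\fp^\star=\innerproduct{b}{\ystar}$ and applying Cauchy--Schwarz to $\innerproduct{\ystar}{\Amap(X)-b}$ yields
\[
\fp(X)-\fp^\star+\|\ystar\|\,\|\Amap(X)-b\|\ \ge\ \kappa\,\Dist^2\big(X,\mathcal{N}_{(\snp)^\circ}(-\Zstar)\big),\qquad X\in\mathbb{B}(\Xstar,\mu).
\]
It remains to pass from $\Dist^2(X,\mathcal{N}_{(\snp)^\circ}(-\Zstar))$ to $\Dist^2(X,\psolution)$. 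From the face description above, complementary slackness, and strong duality, one checks $\psolution=\mathcal{N}_{(\snp)^\circ}(-\Zstar)\cap\{X\in\sn:\Amap(X)=b\}$. Dual strict complementarity (\cref{def:dual-sc-conic}) gives $\Xstar\in\relint\,\mathcal{N}_{(\snp)^\circ}(-\Zstar)$, and this transversality yields bounded linear regularity of these two convex sets on the bounded ball $\mathbb{B}(\Xstar,\mu)$ (see \cref{appendix:BLR}): there is $c>0$ with $\Dist(X,\psolution)\le c\big(\Dist(X,\mathcal{N}_{(\snp)^\circ}(-\Zstar))+\|\Amap(X)-b\|\big)$ on $\mathbb{B}(\Xstar,\mu)$. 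Squaring, using $(a+b)^2\le2a^2+2b^2$, absorbing $\|\Amap(X)-b\|^2\le M\|\Amap(X)-b\|$ with $M:=\sup_{X\in\mathbb{B}(\Xstar,\mu)}\|\Amap(X)-b\|<\infty$, and substituting the displayed inequality, I collect constants into $\kappa_1,\gamma_1>0$ so that $\fp(X)-\fp^\star+\gamma_1\|\Amap(X)-b\|\ge\kappa_1\,\Dist^2(X,\psolution)$ on $\mathbb{B}(\Xstar,\mu)$; the dual inequality follows by the symmetric argument.

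I expect the main obstacle to be the bounded-linear-regularity step: unlike in the polyhedral setting, two closed convex sets need not be boundedly linearly regular even when they intersect, so one genuinely needs the relative-interior (transversality) content of dual strict complementarity together with the nonpolyhedral geometry of the face $\mathcal{N}_{(\snp)^\circ}(-\Zstar)$. Everything else is bookkeeping --- Cauchy--Schwarz, $(a+b)^2\le2a^2+2b^2$, and boundedness of the affine residual on the fixed ball, the last of which is precisely what permits downgrading the quadratic affine residual $\|\Amap(X)-b\|^2$ to a linear term $\|\Amap(X)-b\|$.
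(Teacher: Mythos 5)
Your overall route --- instantiate \cref{lemma:general-growth-g(x)} to get a subgradient-type growth inequality for $g$, convert the linear term into $-\|\cdot\|\,\|\Amap(X)-b\|$ by Cauchy--Schwarz, and then trade $\Dist^2(X,(\partial g)^{-1}(-\barZ))$ for $\Dist^2(X,\psolution)$ via bounded linear regularity, absorbing $\|\Amap(X)-b\|^2$ into a linear term on the bounded ball --- is exactly the paper's argument. However, there is one genuine gap: you apply everything at the \emph{given} optimal pair $(\Xstar,\ystar,\Zstar)$ and assert that dual strict complementarity gives $\Xstar\in\relint\,\mathcal{N}_{(\snp)^\circ}(-\Zstar)$. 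That does not follow. The proposition's hypothesis lets $(\Xstar,\ystar,\Zstar)$ be an \emph{arbitrary} optimal pair, while \cref{def:dual-sc-conic} only asserts the \emph{existence} of some pair $(\bXstar,\bystar,\bZstar)$ with $\bXstar\in\relint\,\mathcal{N}_{(\snp)^\circ}(-\bZstar)$. For a dual optimal $\Zstar$ of non-maximal rank, the exposed face $\mathcal{N}_{(\snp)^\circ}(-\Zstar)$ is strictly larger than the one exposed by $\bZstar$, its relative interior consists of matrices of rank $n-\mathrm{rank}(\Zstar)$ exceeding the maximal rank attained on $\psolution$, and hence $\psolution\cap\relint\,\mathcal{N}_{(\snp)^\circ}(-\Zstar)=\emptyset$. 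In that case the sufficient condition of \cref{prop:blr-sufficient} fails for the pair of sets you chose, and your bounded-linear-regularity step has no justification. (A second, smaller instance of the same conflation: for the exact-penalty choice of $g$, \cref{lemma:general-growth-g(x)} requires $\rho>\Trace(\barZ)$, and the theorems only guarantee $\rho>\Trace(\bZstar)$ for the strict-complementarity solution, not $\rho>\Trace(\Zstar)$ for an arbitrary one.)

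The fix is to decouple the two roles, as the paper does: keep $\Xstar$ only as the center of the ball, introduce the strict-complementarity triple $(\bXstar,\bystar,\bZstar)$ separately, and run the whole argument with $\bZstar$ --- i.e., use $\psolution=\mathcal{X}_0\cap(\partial g)^{-1}(-\bZstar)$, apply \cref{prop:blr-sufficient} with the relative-interior point $\bXstar$, and invoke \cref{eq:penalty-GQG-primal-new} with $\barX=\Xstar$ and $\barZ=\bZstar$, which is legitimate because $\innerproduct{\Xstar}{\bZstar}=0$ holds by complementary slackness between \emph{any} primal optimal and \emph{any} dual optimal solution. The Cauchy--Schwarz step then produces $\|\bystar\|$ rather than $\|\ystar\|$, and the rest of your bookkeeping goes through unchanged. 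The dual half has the symmetric issue and the symmetric fix.
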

 
The proof of \Cref{thm:QG-p} is given in \Cref{subsec:proof-thm:QG-p}.~We now see that \cref{theorem:growth-error-bound,theorem:growth-error-bound-dual} are direct consequences of \cref{thm:QG-p} as $\max\{0,\lammax(-X)\}\leq \Dist(X,\snp)$ for all $X \in \sn$. The compactness statement in \cref{theorem:growth-error-bound,theorem:growth-error-bound-dual} comes from the fact that the solution set $\psolution$ (resp. $\dsolution$)
is compact if and only if any sublevel set of $\fp$ (resp. $\fd$) is compact (see, e.g., \cite[Lemma D.1]{liao2023overview}). 

\section{
Augmented Lagrangian Methods (ALMs) for SDPs}
\label{section:linear-convergence}

In this section, we prove that both primal and dual iterates of ALMs, when applied to \cref{eq:primal-conic,eq:dual-conic},  enjoy linear convergence under the usual assumption of strict complementarity.
 
\subsection{Inexact Augmented Lagrangian Method} 
In principle, ALM can be applied to solve both \cref{eq:primal-conic,eq:dual-conic}. 
However, most existing results focus on solving \cref{eq:dual-conic} \cite{zhao2010newton,yang2015sdpnal+}.  
For simplicity, we here focus on one formulation of the augmented Lagrangian function for \cref{eq:primal-conic}. The dual formulation is presented in \cref{appendix:ALMs-dual} for completeness.

We start by introducing two dual variables $y \in \RR^m$ and $Z \in \snp$ and defining the Lagrangian function for \cref{eq:primal-conic} as 
$L_0(X,y,Z) = \innerproduct{C}{X} + \innerproduct{y}{b-\Amap(X)} + \innerproduct{Z}{X}$. The corresponding Lagrangian dual function and dual problem read as 
\vspace{-0.5mm}
\begin{equation}
    \label{eq:dual-function-review}
    g_0(y,Z) = \inf_{X \in \sn}\; L_0(X,y,Z)
   \quad\text{and}\quad
   \max_{y \in \RR^m, Z \in \snp}\; g_0(y,Z).
\end{equation}
\vspace{-0.5mm}
It can be verified the dual in \cref{eq:dual-function-review} is the same as the dual conic program \cref{eq:dual-conic}. 
Given a penalty parameter $r > 0$, the \textit{Augmented Lagrangian} function of \cref{eq:primal-conic} corresponding to $L_0$ is defined as
\vspace{-0.5mm}
\begin{equation}
\label{eq:aug-function-review}
     L_{r}(X,y,Z) = \innerproduct{C}{X} + \frac{1}{2r}(\|y + r(b-\Amap(X))\|^2 +\|\Pi_{\snp}(Z-rX)\|^2 - \|y\|^2 - \|Z\|^2), 
\end{equation}
where $\Pi_{\snp}(\cdot)$ denotes the orthogonal projection onto the cone $\snp$. Note that $L_{r}$ is continuously differentiable as $\|\Pi_{\snp}(\cdot)\|^2$ is continuously differentiable \cite[Section 1]{zhao2010newton}. Given initial points $(y_0,Z_0) \in \RR^m \times \snp$ and a sequence of positive
scalars $r_k \uparrow r_\infty < +\infty$, the \textit{inexact ALM} generates a sequence of $\{X_k\}$ (primal variables) and $\{y_k,Z_k\}$ (dual variables) as  
\vspace{-0.5mm}
\begin{subequations}  \label{eq:alm-update-conic-primal-both}
    \begin{align}
       X_{k+1} & \approx \min_{X \in \sn} L_{r_k}(X,y_k,Z_k),  \label{eq:alm-update-conic-primal-a} \\
        y_{k+1}& = y_{k} + r_k\nabla_y L_{r_k}(X_{k+1},y_k,Z_k)= y_k + r_k(b-\Amap(X_{k+1})), \label{eq:alm-update-conic-primal-b}\\
        Z_{k+1}& = Z_{k} + r_k\nabla_Z L_{r_k}(X_{k+1},y_k,Z_k)= \Pi_{\snp} (Z_k - r_kX_{k+1}).   \label{eq:alm-update-primal-conic-c}
    \end{align}
\end{subequations}
For notational convenience, we write $w = (y,Z)$ and $w_k=(y_k,Z_k)$ and consider the following two inexactness criteria for solving \cref{eq:alm-update-conic-primal-a} (since it can be a challenge to solve \cref{eq:alm-update-conic-primal-a} exactly):
 \begin{align}
         L_{r_k}(X_{k+1},w_k)- \min_{X \in \sn} {L}_{r_k}(X,w_k) & \leq 
         \epsilon_k^2/(2r_k), \quad \textstyle \sum_{k=1}^\infty  \epsilon_k < \infty, \tag{$\text{A}^\prime$} \label{eq:stopping-alm-conic-1} \\
         L_{r_k}(X_{k+1},w_k)- \min_{X\in \sn} {L}_{r_k}(X,w_k) & \leq 
         \delta_k^2 \| w_{k+1} -w_k\|^2/(2r_k), \quad  \textstyle \sum_{k=1}^\infty \delta_k < \infty. \tag{$\text{B}^\prime$}
        \label{eq:stopping-alm-conic-2}
\end{align}
The two stopping criteria above are suggested by the seminal work of Rockafellar \cite{rockafellar1976augmented}.

\subsection{Linear convergences of primal and dual iterates in ALM}
\label{subsection:linear-ALM}

After \cite{{rockafellar1976augmented}}, one classical method for analyzing the convergence of the inexact ALM \Cref{eq:alm-update-conic-primal-both} is to utilize the connection between ALM and PPM. We review the following important lemma.
\begin{lemma}[{\cite[Proposition 6]{rockafellar1976augmented}}]
\label{prop:PPM-ALM-review}
The dual iterates $w_{k+1} = (y_{k+1},Z_{k+1})$ in \cref{eq:alm-update-conic-primal-b,eq:alm-update-primal-conic-c} satisfy the following relationship
    \begin{equation*}\|w_{k+1} -\prox_{r_k,-g_0}(w_k) \|^2/(2 r_k) \leq L_{r_k}(X_{k+1},w_{k}) - \inf_{X \in \sn } {L}_{r_k}(X,w_k),
    \end{equation*}
    where $g_0$ is the Lagrangian dual function in \cref{eq:dual-function-review}. 
\end{lemma}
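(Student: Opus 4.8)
The plan is to reprove this as the classical inexactness-transfer identity underpinning the ALM--PPM correspondence of \cite{rockafellar1976augmented}, using three ingredients: a completed-square representation of the augmented Lagrangian, the Moreau--Yosida (minimax) identity for its infimum over $X$, and an elementary projection estimate. Throughout write $w'=(y',Z')$ and $K=\RR^m\times\snp$, and read $g_0$ as the concave dual objective of \cref{eq:dual-function-review}, so that the effective domain of $-g_0$ is contained in $K$. First I would complete the square in $w'$ inside $L_0(X,w')-\frac{1}{2r}\|w'-w\|^2$: since $L_0(X,\cdot)$ is affine and the quadratic is separable, the maximization over $w'\in K$ splits into a free maximization in $y'$ and a projection onto $\snp$ in $Z'$; its maximizer is exactly the ALM multiplier update $W(X,w)$ of \cref{eq:alm-update-conic-primal-b,eq:alm-update-primal-conic-c}, and its maximum value is $L_r(X,w)$ as in \cref{eq:aug-function-review}. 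Equivalently, $W(X,w)=\Pi_K\bigl(w+r\,v(X)\bigr)$, where $v(X)$ denotes the (constant in $w'$) gradient of $L_0(X,\cdot)$, and $L_r(X,w)=L_0(X,W(X,w))-\frac{1}{2r}\|W(X,w)-w\|^2$.

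Next I would establish the identity $\inf_{X\in\sn}L_r(X,w)=\sup_{w'}\{g_0(w')-\frac{1}{2r}\|w'-w\|^2\}$, obtained by exchanging $\inf_X$ and $\sup_{w'}$ in $\inf_X\sup_{w'}\{L_0(X,w')-\frac{1}{2r}\|w'-w\|^2\}$. This is legitimate by a standard convex--concave minimax argument -- the expression is affine (hence convex) in $X$ and $\frac1r$-strongly concave in $w'$, the inner supremum being finite and attained -- or, equivalently, it is the Moreau--Yosida regularization identity for the dual function \cite{rockafellar1973dual}. The right-hand side is the negative of the Moreau envelope of $-g_0$ with parameter $r$; its maximizer is the proximal point $\bar w:=\prox_{r,-g_0}(w)$, which lies in $K$. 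In particular, $\inf_{X}L_r(X,w)=g_0(\bar w)-\frac{1}{2r}\|\bar w-w\|^2$.

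Then, writing $W:=W(X,w)$ and using the previous display together with the trivial bound $g_0(\bar w)=\inf_{X'}L_0(X',\bar w)\le L_0(X,\bar w)$, I would chain
\[
  L_r(X,w)-\inf_{X'}L_r(X',w)\;\ge\;\bigl[L_0(X,W)-L_0(X,\bar w)\bigr]+\tfrac1{2r}\|\bar w-w\|^2-\tfrac1{2r}\|W-w\|^2 .
\]
Because $L_0(X,\cdot)$ is affine with gradient $v(X)$, the bracket equals $\langle v(X),W-\bar w\rangle$; the variational inequality for $W=\Pi_K(w+r\,v(X))$ tested against $\bar w\in K$ gives $\langle v(X),W-\bar w\rangle\ge\frac1r\langle W-w,W-\bar w\rangle$; and a one-line Euclidean identity rewrites $\frac1r\langle W-w,W-\bar w\rangle+\frac1{2r}\|\bar w-w\|^2-\frac1{2r}\|W-w\|^2$ as $\frac1{2r}\|W-\bar w\|^2$. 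Hence $L_r(X,w)-\inf_{X'}L_r(X',w)\ge\frac1{2r}\|W(X,w)-\prox_{r,-g_0}(w)\|^2$. Specializing to $X=X_{k+1}$, $w=w_k$, $r=r_k$ makes $W(X_{k+1},w_k)=(y_{k+1},Z_{k+1})=w_{k+1}$ by \cref{eq:alm-update-conic-primal-b,eq:alm-update-primal-conic-c}, which is exactly the claimed inequality.

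The main obstacle is the second step -- the minimax/Moreau--Yosida identity for $\inf_X L_r(X,w)$ -- since that is the only place where convex duality (as opposed to algebra and projection properties) enters; once it is in place, the rest is a short computation. In a write-up one could simply invoke \cite[Proposition 6]{rockafellar1976augmented} or the regularization identity of \cite{rockafellar1973dual} rather than redo it.
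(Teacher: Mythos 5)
Your proof is correct and is essentially the classical argument behind the cited result: the paper does not prove \cref{prop:PPM-ALM-review} itself but simply invokes \cite[Proposition 6]{rockafellar1976augmented}, whose proof proceeds exactly as you describe (completed square identifying $L_r(X,w)=\max_{w'\in K}\{L_0(X,w')-\tfrac{1}{2r}\|w'-w\|^2\}$ with maximizer equal to the ALM multiplier update, the Moreau--Yosida/minimax identity, and the strong-concavity estimate $\phi(W)-\phi(\bar w)\ge \tfrac{1}{2r}\|W-\bar w\|^2$ via the projection variational inequality). The one nontrivial ingredient you flag, $\inf_X L_r(X,w)=\max_{w'}\{g_0(w')-\tfrac{1}{2r}\|w'-w\|^2\}$, is exactly \cref{prop:Moreau-envelope} in the paper's appendix (cited from \cite{rockafellar1973dual}), so your reconstruction is consistent with how the paper organizes these facts.
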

If \cref{eq:alm-update-conic-primal-a} is updated exactly, \cref{prop:PPM-ALM-review} confirms that $w_{k+1} =  \prox_{r_k,-g_0}(w_k)$: the dual iterate of the ALM agree with the PPM iterate for the dual problem \cref{eq:dual-function-review}. If \cref{eq:alm-update-conic-primal-a} is updated inexactly, the iterate $w_{k+1}$ \textit{may not} satisfy the affine constraint $C = Z_{k+1} + \Ajmap(y_{k+1}).$ Moreover, viewing \cref{prop:PPM-ALM-review}, the stopping criteria \cref{eq:stopping-alm-conic-1,eq:stopping-alm-conic-2} naturally imply that the dual iterate $w_{k+1}$ satisfies 
$$
\|w_{k+1} - \prox_{r_k,-g_0}(w_k)\| \leq \epsilon_k ~~\text{and}~~\|w_{k+1} - \prox_{r_k,-g_0}(w_k)\| \leq \delta_k\|w_{k+1}-w_k\|.$$

Then, we can establish the convergence of the dual sequence $\{w_k\}$ in \Cref{eq:alm-update-conic-primal-both} from \cref{prop:convergence-ppm} via PPM. In particular, this observation was first discovered in 
\cite{rockafellar1976augmented} and later tailored in convex composite conic programmings in \cite{cui2019r}. We summarize asymptotic convergences of \cref{eq:alm-update-conic-primal-both} below.

\begin{proposition}
[Asymptotic convergences]
\label{prop:asymptotic-conic-alm}
    Consider \cref{eq:primal-conic,eq:dual-conic}. Assume strong duality holds and $\dsolution \neq \emptyset$. Let $\{X_k,w_k\}$ be a sequence from the ALM \cref{eq:alm-update-conic-primal-both} under \cref{eq:stopping-alm-conic-1}. The following hold. 
    \begin{enumerate}[label=(\alph*),leftmargin=*,align=left,noitemsep]
        \item The dual sequence $\{w_k\}$ is bounded.~Further, $\lim_{k \to \infty} w_k = w_{\infty} \in \dsolution$ (i.e., the whole sequence converges to a dual optimal solution).
        \item The primal feasibility and cost value gap satisfy    
    \begin{equation*}
        \begin{aligned}
            \Dist(X_{k+1},\snp)& \leq  \|Z_k - Z_{k+1}\|/r_k \to 0,
             \quad
            \|\Amap(X_{k+1}) - b\|  =  \|y_k - y_{k+1}\|/r_k \to 0,
            \\
            \innerproduct{C}{X_{k+1}} - \pstar & \leq   L_{r_k}(X_{k+1},w_k) - \min_{X \in \sn} L_{r_k}(X,w_k)   +   ( \|w_k\|^2 -\|w_{k+1}\|^2)/(2r_k)\to 0.
    \end{aligned}  
    \end{equation*}
   \item If the primal solution set $\psolution$ in \cref{eq:primal-solution} is nonempty and bounded, then the primal sequence $\{X_k\}$ is also bounded, and all of its cluster points belongs to $\psolution$.
    \end{enumerate}
\end{proposition}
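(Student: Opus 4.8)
The plan is to treat the three items separately, leaning on the ALM--PPM correspondence for item (a), on direct manipulation of the ALM update formulas together with the explicit form of $L_{r_k}$ for item (b), and on a homogenization argument for item (c). \emph{Item (a).} I would first note that \cref{prop:PPM-ALM-review} combined with the stopping rule \cref{eq:stopping-alm-conic-1} yields $\|w_{k+1}-\prox_{r_k,-g_0}(w_k)\|\le\epsilon_k$ with $\sum_k\epsilon_k<\infty$, where $-g_0$ is read as the proper closed convex function whose effective domain already encodes the constraint $Z\in\snp$, so that $\argmin(-g_0)=\dsolution\neq\emptyset$. Hence $\{w_k\}$ is an inexact PPM sequence of the form \cref{eq:iPPM} under criterion \cref{eq:stopping-1} for minimizing $-g_0$, and \cref{prop:convergence-ppm}(a) gives $w_k\to w_\infty\in\dsolution$; boundedness of $\{w_k\}$ is then automatic from convergence.

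\emph{Item (b).} The two feasibility estimates are read off \cref{eq:alm-update-conic-primal-b} and \cref{eq:alm-update-primal-conic-c}. The affine part is immediate: $y_{k+1}-y_k=r_k(b-\Amap(X_{k+1}))$ gives $\|\Amap(X_{k+1})-b\|=\|y_{k+1}-y_k\|/r_k$, which tends to $0$ since $\{w_k\}$ converges and $\inf_k r_k>0$. For the conic part I would apply the Moreau decomposition to $Z_{k+1}=\Pi_{\snp}(Z_k-r_kX_{k+1})$ (using $(\snp)^\circ=\snn$): setting $N_k:=-\Pi_{\snn}(Z_k-r_kX_{k+1})\in\snp$, one gets $r_kX_{k+1}=(Z_k-Z_{k+1})+N_k$, hence $r_k\,\Dist(X_{k+1},\snp)=\Dist(r_kX_{k+1},\snp)\le\|r_kX_{k+1}-N_k\|=\|Z_k-Z_{k+1}\|\to0$. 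For the cost gap, substituting $y_{k+1}=y_k+r_k(b-\Amap(X_{k+1}))$ and $Z_{k+1}=\Pi_{\snp}(Z_k-r_kX_{k+1})$ into \cref{eq:aug-function-review} turns the four squared-norm terms into $\|w_{k+1}\|^2-\|w_k\|^2$, giving the identity $L_{r_k}(X_{k+1},w_k)=\innerproduct{C}{X_{k+1}}+(\|w_{k+1}\|^2-\|w_k\|^2)/(2r_k)$; rearranging and then using that $\min_X L_{r_k}(X,w_k)$ is an augmented dual value, hence a lower bound on $\pstar$, produces exactly the claimed inequality, whose right-hand side tends to $0$ because the suboptimality term is bounded by $\epsilon_k^2/(2r_k)\to0$ via \cref{eq:stopping-alm-conic-1} while $\|w_k\|^2-\|w_{k+1}\|^2\to0$. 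I would also record the complementary bound $\liminf_k\innerproduct{C}{X_{k+1}}\ge\pstar$ --- obtained by writing $\innerproduct{C}{X_{k+1}}=\innerproduct{\ystar}{\Amap(X_{k+1})}+\innerproduct{\Zstar}{X_{k+1}}$ for a dual optimal $(\ystar,\Zstar)$ and using $\innerproduct{\Zstar}{X_{k+1}}\ge-\|\Zstar\|\,\Dist(X_{k+1},\snp)$ together with strong duality --- so that in fact $\innerproduct{C}{X_{k+1}}\to\pstar$.

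\emph{Item (c).} Assume $\psolution$ is nonempty and bounded. Since $\psolution$ is a closed convex set, boundedness is equivalent to triviality of its recession cone, i.e.\ $\{D\in\snp:\Amap(D)=0,\ \innerproduct{C}{D}\le0\}=\{0\}$. If $\{X_k\}$ were unbounded, I would pass to a subsequence along which $\|X_k\|\to\infty$ and $X_k/\|X_k\|\to D$ with $\|D\|=1$; dividing the three limiting relations of item (b) by $\|X_k\|$ and using continuity of $\Dist(\cdot,\snp)$, $\Amap$, and $\innerproduct{C}{\cdot}$ forces $D\in\snp$, $\Amap(D)=0$, and $\innerproduct{C}{D}\le0$, contradicting $\|D\|=1$; hence $\{X_k\}$ is bounded. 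Finally, for any cluster point $\barX$ of $\{X_k\}$, continuity and item (b) give $\barX\in\snp$, $\Amap(\barX)=b$, and $\innerproduct{C}{\barX}\le\pstar$; since every feasible point of \cref{eq:primal-conic} has objective at least $\pstar$, equality holds and $\barX\in\psolution$.

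The parts I expect to be essentially bookkeeping are item (a) and the feasibility estimates in item (b); the step requiring the most care is the homogenization argument in item (c), where the three approximate-optimality relations of item (b) must be normalized correctly to land inside the recession cone of $\psolution$, and one must invoke the equivalence between boundedness of a nonempty closed convex set and triviality of its recession cone. A secondary subtlety, best dispatched early, is identifying the correct proper closed convex representative of the dual objective $-g_0$ (incorporating the constraint $Z\in\snp$) so that \cref{prop:convergence-ppm} applies verbatim with $S=\dsolution$, and justifying the elementary weak-duality bound $\min_X L_{r_k}(X,w_k)\le\pstar$ used in item (b).
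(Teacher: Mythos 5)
Your proposal is correct and follows essentially the same route as the paper: part (a) via the ALM--PPM correspondence (\cref{prop:PPM-ALM-review}) plus \cref{prop:convergence-ppm}, and part (b) via the Moreau decomposition of $Z_k - r_kX_{k+1}$, the update formulas, and the Moreau-envelope/weak-duality bound $\min_X L_{r_k}(X,w_k)\le \dstar\le \pstar$. The only difference is in part (c): the paper simply cites Rockafellar's fact that $\psolution$ is bounded iff the approximate-solution sets $\{X:\Dist(X,\snp)\le\gamma_1,\ \|\Amap(X)-b\|\le\gamma_2,\ \innerproduct{C}{X}-\pstar\le\gamma_3\}$ are bounded, whereas you reprove this self-containedly through the recession-cone characterization and a normalized-subsequence contradiction; your argument is valid and makes the proof more self-contained at the cost of a little extra length.
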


\begin{proof}
    We give the sketch of proof for parts (a) and (b).  A complete proof can be found in \cref{subsection:proof-asymptotic}. Part (a) comes directly from the PPM convergence \cref{prop:convergence-ppm} as 
\cref{prop:PPM-ALM-review} and the stopping criteria \cref{eq:stopping-alm-conic-1} naturally imply the dual iterate $w_{k+1}$ satisfies 
$\|w_{k+1}\! -\! \prox_{r_k,-g_0}(w_k)\|\! \leq\! \epsilon_k$ and $\sum_{k=1}^\infty \epsilon_k < \infty$.

In part (b), the first inequality uses the fact that $Z_{k+1}+(X_{k+1}-X_k)/r_k\in\snp$ by performing Moreau decomposition \cite[Exercise 12.22]{rockafellar2009variational} on $r_kX_{k+1}-Z_k$ and using the update rule \cref{eq:alm-update-conic-primal-b}; The second inequality comes directly from \cref{eq:alm-update-primal-conic-c}; The last inequality uses the definition of $L_{r_k}(X_{k+1},w_k)$ in \cref{eq:aug-function-review} and the fact $\min_{X \in \sn } L_{r_k}(X,w_k) \leq \pstar$.

Part (c) is a consequence of part (b) and the fact that $\psolution$ is bounded if and only if for any $\gamma \in  \RR^3$ the set $\{X \in \sn \mid \Dist(X,\snp) \leq \gamma_1, \|\Amap(X) - b\|  \leq  \gamma_2, \innerproduct{C}{X}  \leq  \gamma_3 \}$ is bounded
\cite[page 110]{rockafellar1976augmented}. 
\end{proof} 
\cref{prop:asymptotic-conic-alm} establishes the asymptotic convergences for both the primal and dual variables. The linear convergence of the dual iterates can also be deduced when the negative of the dual $g_0$ defined in \cref{eq:dual-function-review} satisfies \cref{eq:QG}. 
However, the rate of the primal iterates remains unclear. Here, leveraging the error bound \cref{eq:error-bound-conic-dual}, we also derive a linear convergence of the primal iterates, which is one main technical contribution of this work. We summarize linear convergence results below.
\begin{theorem}[Linear convergences]
    \label{prop:KKT-residual}
    Consider \cref{eq:primal-conic,eq:dual-conic}. Assume strong duality and dual strict complementarity holds (implying $\psolution \neq \emptyset$ and $\dsolution \neq \emptyset$). Let $\{X_k,w_k\}$ be a sequence from the ALM \cref{eq:alm-update-conic-primal-both} under \cref{eq:stopping-alm-conic-1,eq:stopping-alm-conic-2}. The following statements hold. 
    \begin{enumerate}[label=(\alph*),leftmargin=*,align=left,noitemsep]
        \item (Dual iterates and KKT residuals) There exist constants $\hat{k} > 0$ such that for all $k \geq \hat{k}$, we have 
        \begin{equation} \label{eq:linear-primal-residual-dual-dist-ALM}
            \begin{aligned} 
                \Dist(w_{k+1}, \dsolution) & \leq \mu_k \cdot \Dist(w_k,\dsolution), \quad &\Dist(X_{k+1},\snp)  \leq \mu^\prime_k \cdot \Dist(w_k,\dsolution),  \\ \|\Amap(X_{k+1}) -b\|  & \leq \mu^{\prime}_k \cdot \Dist(w_k,\dsolution), \quad
                &\innerproduct{C}{X_{k+1}} - \pstar  \leq \mu^{\prime \prime}_k \cdot \Dist(w_k, \dsolution),
            \end{aligned}
        \end{equation}
        where  $0< \mu_k < 1$, $\mu_k'>   0,$ and $ \mu''_k > 0$ are positive finite constants. 
        \item (Primal iterates)
         If $\psolution$ is bounded, then the primal sequence $\{X_k\}$ also converges linearly to the solution set $\psolution$, i.e., there a constant $\hat{k}> 0$ such that for all $k \geq \hat{k}$,
    \begin{equation} \label{eq:linear-primal-ALM}
    \Dist^2(X_{k+1},\psolution) \leq \tau_k \cdot \Dist(w_k,\dsolution), 
     \end{equation}
    where $\tau_k > 0 $ is a finite constant.
    \end{enumerate}
\end{theorem}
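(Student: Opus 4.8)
The plan is to transfer the analysis to the proximal-point side and then feed in the error bounds of \Cref{section:QG}. First, by \Cref{prop:PPM-ALM-review} together with the stopping criteria \cref{eq:stopping-alm-conic-1,eq:stopping-alm-conic-2}, the dual iterates satisfy $\|w_{k+1}-\prox_{r_k,-g_0}(w_k)\|\le \epsilon_k$ and $\|w_{k+1}-\prox_{r_k,-g_0}(w_k)\|\le \delta_k\|w_{k+1}-w_k\|$, so $\{w_k\}$ is an inexact PPM sequence for $-g_0$, which is the reformulated dual $\fd$ in \cref{eq:conic-r-d} with $h=\delta_{\snp}$ and whose minimizer set is $\dsolution$ by \Cref{eq:equivalence-penalty}. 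By \Cref{prop:asymptotic-conic-alm}(a) we have $w_k\to w_\infty\in\dsolution$, so the tail of $\{w_k\}$ lies in a fixed ball $\mathbb{B}(w_\infty,\mu)$, on which $\fd$ satisfies \cref{eq:QG} by \cref{eq:QG-conic-programs-dual} of \Cref{theorem:growth-error-bound-dual}. Hence \Cref{prop:convergence-ppm}(b) yields $\Dist(w_{k+1},\dsolution)\le \hat\theta_k\Dist(w_k,\dsolution)$ for all $k\ge\hat k$, with $\hat\theta_k=(\theta_k+2\delta_k)/(1-\delta_k)$ and $\theta_k=1/\sqrt{r_k\muq+1}$; since $r_k$ is bounded away from $0$ and $\delta_k\to 0$, we get $\hat\theta_k\to 1/\sqrt{r_\infty\muq+1}<1$, so after enlarging $\hat k$ we may take $\mu_k:=\hat\theta_k<1$. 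This is the dual-iterate bound in part (a).

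Next I would bound the feasibility and cost residuals in part (a). The key estimate is $\|w_{k+1}-w_k\|\le (1-\delta_k)^{-1}\Dist(w_k,\dsolution)$: since $\prox_{r_k,-g_0}$ is firmly nonexpansive and every element of $\dsolution$ is one of its fixed points, $\|\prox_{r_k,-g_0}(w_k)-w_k\|\le\Dist(w_k,\dsolution)$, and combining this with $\|w_{k+1}-\prox_{r_k,-g_0}(w_k)\|\le\delta_k\|w_{k+1}-w_k\|$ and the triangle inequality rearranges to the claim. Then \Cref{prop:asymptotic-conic-alm}(b) gives $\Dist(X_{k+1},\snp)\le\|Z_k-Z_{k+1}\|/r_k$ and $\|\Amap(X_{k+1})-b\|=\|y_k-y_{k+1}\|/r_k$, both at most $\|w_{k+1}-w_k\|/r_k\le\mu'_k\Dist(w_k,\dsolution)$ with $\mu'_k=(r_k(1-\delta_k))^{-1}$. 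For the cost gap, \Cref{prop:asymptotic-conic-alm}(b) bounds $\innerproduct{C}{X_{k+1}}-\pstar$ by $[L_{r_k}(X_{k+1},w_k)-\min_X L_{r_k}(X,w_k)]+(\|w_k\|^2-\|w_{k+1}\|^2)/(2r_k)$; the bracket is $\le\delta_k^2\|w_{k+1}-w_k\|^2/(2r_k)$ by \cref{eq:stopping-alm-conic-2}, while $\|w_k\|^2-\|w_{k+1}\|^2\le 2M\|w_k-w_{k+1}\|$ with $M:=\sup_k\|w_k\|<\infty$ (finite since $\{w_k\}$ converges) and $\|w_{k+1}-w_k\|^2\le(1-\delta_k)^{-2}D\,\Dist(w_k,\dsolution)$ with $D:=\sup_k\Dist(w_k,\dsolution)<\infty$. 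Collecting terms gives $\innerproduct{C}{X_{k+1}}-\pstar\le\mu''_k\Dist(w_k,\dsolution)$ with a finite $\mu''_k$.

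Part (b) is then immediate from the primal error bound. If $\psolution$ is bounded, \Cref{prop:asymptotic-conic-alm}(c) shows $\{X_k\}$ is bounded, so there is a compact set $\mathcal{U}$ containing $\{X_k\}_{k\ge0}$ and some $\Xstar\in\psolution$; on $\mathcal{U}$ the error bound \cref{eq:error-bound-conic} of \Cref{theorem:growth-error-bound} applies, so for $k\ge\hat k$,
\[\kappa\,\Dist^2(X_{k+1},\psolution)\le\innerproduct{C}{X_{k+1}}-\pstar+\gamma\|\Amap(X_{k+1})-b\|+\alpha\,\Dist(X_{k+1},\snp)\le\big(\mu''_k+(\gamma+\alpha)\mu'_k\big)\Dist(w_k,\dsolution),\]
which is the bound \cref{eq:linear-primal-ALM} with $\tau_k=(\mu''_k+(\gamma+\alpha)\mu'_k)/\kappa$.

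The conceptual difficulty of this result has already been absorbed into \Cref{theorem:growth-error-bound,theorem:growth-error-bound-dual,prop:convergence-ppm}, so what remains is the estimate assembly above; two points need care. First, \cref{eq:error-bound-conic} must hold on a set covering the \emph{whole} bounded sequence $\{X_k\}$, not merely a neighborhood of an optimizer — this is precisely why the ``any compact set'' strengthening in \Cref{theorem:growth-error-bound} over \cite[Corollary 3.1]{cui2016asymptotic} is indispensable here. Second, the inexactly computed $w_{k+1}$ need not be affine-feasible, so $\fd(w_{k+1})$ may be $+\infty$; this is harmless because $\fd$ is never evaluated at $w_{k+1}$, the residuals being controlled through \Cref{prop:asymptotic-conic-alm}(b) and the PPM contraction being applied only to $\Dist(\cdot,\dsolution)$.
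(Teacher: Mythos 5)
Your proposal is correct and follows essentially the same route as the paper's proof (Appendix E.2): the ALM--PPM correspondence plus the dual quadratic growth from \Cref{theorem:growth-error-bound-dual} gives the contraction of $\Dist(w_k,\dsolution)$, the step-length bound $\|w_{k+1}-w_k\|\le(1-\delta_k)^{-1}\Dist(w_k,\dsolution)$ combined with \Cref{prop:asymptotic-conic-alm}(b) controls the KKT residuals, and the primal error bound \cref{eq:error-bound-conic} on a compact set containing the whole bounded sequence $\{X_k\}$ yields part (b). The only cosmetic differences are that you obtain the step-length bound from firm nonexpansiveness of the displacement $I-\prox_{r_k,-g_0}$ rather than from the prox-subproblem optimality condition used in the paper's \Cref{prop:property-proximal}, and you absorb $\|w_k\|$ and $\Dist(w_k,\dsolution)$ into uniform suprema when forming $\mu''_k$; both choices are equally valid.
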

\vspace{-3 mm}
\begin{proof} 
    The proof of part (a) is largely motivated by the techniques in \cite{cui2019r} that focus on ALMs on dual SDPs. Part (b) is a consequence of the error bound \cref{eq:error-bound-conic}. 
    Here, we highlight the key steps of the proof of part (a) due to the page limit, and detailed arguments about part (a) can be found in \cref{appendix:subsection-proof-KKT}. Note that if dual strict complementarity holds, \cref{theorem:growth-error-bound-dual} (with $h$ in \cref{eq:conic-r-d} as an indicator function) guarantees the following two nice properties: 1) $-g_0$ in \cref{eq:dual-function-review} satisfies quadratic growth in \cref{eq:QG-conic-programs-dual}; 2) the error bound \cref{eq:error-bound-conic} holds. 
    \begin{enumerate}
    [leftmargin=*,noitemsep]
    \setlength{\itemsep}{5pt}
        \item By the discussion after \cref{prop:PPM-ALM-review}, since \cref{eq:stopping-alm-1,eq:stopping-alm-conic-2} are in force, the dual sequence $\{w_k\}$ satisfies 
    $$
\|w_{k+1} - \prox_{r_k,-g_0}(w_k)\| \leq \epsilon_k, \|w_{k+1} - \prox_{r_k,-g_0}(w_k)\| \leq \delta_k\|w_{k+1}-w_k\|, \forall k \geq 0.$$
    Applying the convergence result of PPM in \cref{prop:convergence-ppm} yields the linear convergence of the dual distance, i.e., there exists a $k_1 \geq 0$ such that $$\Dist(w_{k+1}, \dsolution)  \leq \mu_k \cdot \Dist(w_k,\dsolution),\; \mu_k < 1, \;\forall k \geq k_1.$$
    Using part (b) in \cref{prop:asymptotic-conic-alm} and $\|w_{k+1} - w_k\|\leq \frac{1}{1-\delta_k} \Dist(w_k,\dsolution)$ if $\delta_k < 1$, we can show 
    \begin{equation*}
        \max \{\Dist(X_{k+1},\snp) , \|\Amap(x_{k+1})-b\| \} \leq \frac{1}{(1-\delta_k)r_k} \Dist(w_k,\dsolution),  
    \end{equation*}
    \begin{equation*}
        \begin{aligned}
            \innerproduct{C}{X_{k+1}} - \pstar
            \leq \frac{\delta_k^2 \|w_{k+1} - w_k\| + \|w_k\| + \|w_{k+1}\| }{2r_k(1-\delta_k)}\Dist(w_k,\dsolution).
        \end{aligned}
    \end{equation*}
    
    Finally, as $\delta_k \to 0$, there exists $k_2\geq 0$ such that $\delta_k < 1 $ and  for all $k \geq k_2$. Thus, part (a) then follows from taking $\hat{k} = \max\{k_1,k_2\}$. 
    \item  Note that the error bound \cref{eq:error-bound-conic} and the assumption that $\psolution$ is bounded guarantee that for any bounded set $\mathcal{U} $ containing $ \psolution$, there exist constants  $\alpha_1,\alpha_2, \alpha_3 > 0$ such that 
        \begin{equation} \label{eq:error-bound-proof-main}
            \innerproduct{C}{X} - \innerproduct{C}{\Xstar}  + \alpha_1 \|\Amap(X) - b\| + \alpha_2 \cdot \Dist(X,\snp)  \geq  \alpha_3 \cdot \Dist^2(X,\psolution), \quad \forall X \in \mathcal{U}.
        \end{equation}
        By \cref{prop:asymptotic-conic-alm} - part (c), the sequence $\{X_k\}$ is bounded.~Let $\mathcal{U}\subseteq \sn$ be a bounded set that contains $\psolution$ and all the iterates $\{X_k\}$. Combing \cref{eq:error-bound-proof-main} with \cref{eq:linear-primal-residual-dual-dist-ALM}, we have 
        \begin{equation*}
        \begin{aligned}
            \alpha_3 \cdot \Dist^2(X_{k+1},\psolution)& \leq  (\mu^{\prime \prime}_k 
      +  (\alpha_1+\alpha_2) \mu_k^\prime)\cdot\Dist(w_k,\dsolution), \quad \forall k \geq \hat{k}.
        \end{aligned}
        \end{equation*}
        Dividing both sides by $\alpha_3$ leads to the desired result in part (b).   
        This completes the proof. 
    \end{enumerate}
\vspace{-8mm}
\end{proof}

\vspace{-1 mm}
To our best knowledge, achieving linear convergence for the primal iterates of inexact ALMs requires a significantly stronger condition on the Lagrangian function \cite[Theorem 5]{rockafellar1976augmented} and implementing an additional stopping criterion \cite[Proposition 3]{cui2019r}. Unfortunately, as highlighted in \cite[Section 3]{cui2019r}, such an assumption in \cite[Theorem 5]{rockafellar1976augmented} can easily fail for general conic programs. 

Our result \cref{prop:KKT-residual}, however, reveals that the linear convergence of the primal iterates also happens under the standard assumption of strict complementarity and bounded primal solution set. This suggests that primal linear convergence is often expected since strict complementary is a generic property of conic programs \cite[Theorem 15]{alizadeh1997complementarity}.
Our result not only completes theoretical convergences of inexact ALMs but also offers insights for practical successes in \cite{zhao2010newton, yang2015sdpnal+}.    

\begin{remark}[Strict complementarity]
    As we see in the proof of \cref{prop:KKT-residual}, strict complementarity is the key to ensure that 1) the negative of the function $g_0$ in \cref{eq:dual-function-review} satisfies quadratic growth; 2) the error bound property \cref{eq:error-bound-proof} holds for the primal conic program. The quadratic growth is used to derive the dual linear convergence \cref{eq:linear-primal-residual-dual-dist-ALM}, and the error bound is to conclude the primal linear convergence. \hfill $\square$
\end{remark}

\section{Numerical experiments}
\label{sec:numerical-experimetns}
In this section, we present numerical experiments to examine the empirical performance of ALM introduced in \cref{section:linear-convergence}. We consider two applications in combinational problems and machine learning, including the SDP relaxation of maximum cut (Max-Cut) problem \cite{goemans1995improved} and linear Support Vector Machine (SVM). The Max-Cut problem and the linear SVM can be formulated as 

\begin{minipage}{0.49\textwidth}
\begin{equation*}
    \begin{aligned}
        \min_{X \in \sn } & \quad \innerproduct{C}{X} \\\mathrm{subject~to} & \quad \mathrm{Diag}(X) = \mathbf{1}, \\
        & \quad X \in \snp,
    \end{aligned}
\end{equation*}
\end{minipage}
\begin{minipage}{0.49\textwidth}
\begin{equation*}
    \begin{aligned}
        \min_{x\in \RR^n, t \in \RR^m} & \quad \lambda \mathbf{1}^\tr t  + \frac{1}{2} \|x\|^2, \\
\mathrm{subject~to} & \quad  \mathrm{diag}(b)Ax + \mathbf{1} \leq t , \\
& \quad t \in \RR^m_+.
    \end{aligned}
\end{equation*}
\end{minipage}
where $\mathbf{1}$ is an all one vector, $A \in \RR^{m \times n}$ and $ b \in \RR^m$ are problem data in the linear SVM, $\mathrm{diag}(b)$ denotes the diagonal matrix with $b$ as the diagonal elements, and $\lambda > 0$ is a constant. We can see that Max-Cut is of the same form as \cref{eq:primal-conic}.
Despite linear SVM is not in the form of SDP \cref{eq:primal-conic}
but a quadratic program, the corresponding ALM with similar convergence guarantees can also be derived since a quadratic program is a special case of SDP. For Max-Cut problem, we select the graph $\text{G}_1,\text{G}_2,$ and $\text{G}_3$ from the website \url{https://web.stanford.edu/~yyye/yyye/Gset/} and only take the first $20 \times 20$ submatrix as the considered problem data $C$. For linear SVM, we randomly generate the problem data $A\! \in\! \RR^{10 \times 100}$ and $b\! \in\! \{-1,1\}^{100}$. We then apply the ALM \cref{eq:alm-update-conic-primal-both} for those instances and compute the relative primal and dual cost value gap and the relative feasibility residuals as follows
\begin{equation*}
    \begin{aligned}
        \epsilon_1   = & \frac{ |\innerproduct{C}{X_k} - \pstar|}{1+|\pstar|}, \epsilon_2 = \frac{|\innerproduct{b}{y_k} - \dstar|}{1+|\dstar|},\eta_1  =  \frac{\|\mathcal{A}(X_k)-b\|}{1+\|b\|}, \eta_2 = \frac{ \|X_k - \Pi_{\snp}(X_k)\|}{1+\|X_k\|}, \\
        \quad \quad \eta_3 &  = \frac{\|C-\mathcal{A}^*(y_k)-Z_k\|}{1+\|C\|}, 
    \eta_4  = \frac{\|Z_k - \Pi_{\snp}(Z_k)\|}{1+\|Z_k\|}, \eta_5 = \frac{| \innerproduct{C}{X_k} - \innerproduct{b}{y_k} |}{1+|d^\star|}.
    \end{aligned} 
\end{equation*}
The numerical results are presented in \cref{fig:numerical-experiment}. In all cases, the primal and dual cost value gap and the KKT residuals all converge linearly to at least the accuracy of $10^{-5}$. The oscillation or flattening behavior that appears in the tail (when the iterates are close to the solution set) could be due to the inaccuracy of the subproblem solver and computational impreciseness. A detailed theoretical analysis of such behavior is interesting, and we leave it to future work. 

Further details on the algorithm parameters, problem instances, and more numerical experiments for machine learning applications can be found in \cref{appendix:sec:numerical-experiment}.

\begin{figure}[t]
\setlength\textfloatsep{0pt}
     \centering
{\includegraphics[width=1\textwidth]
{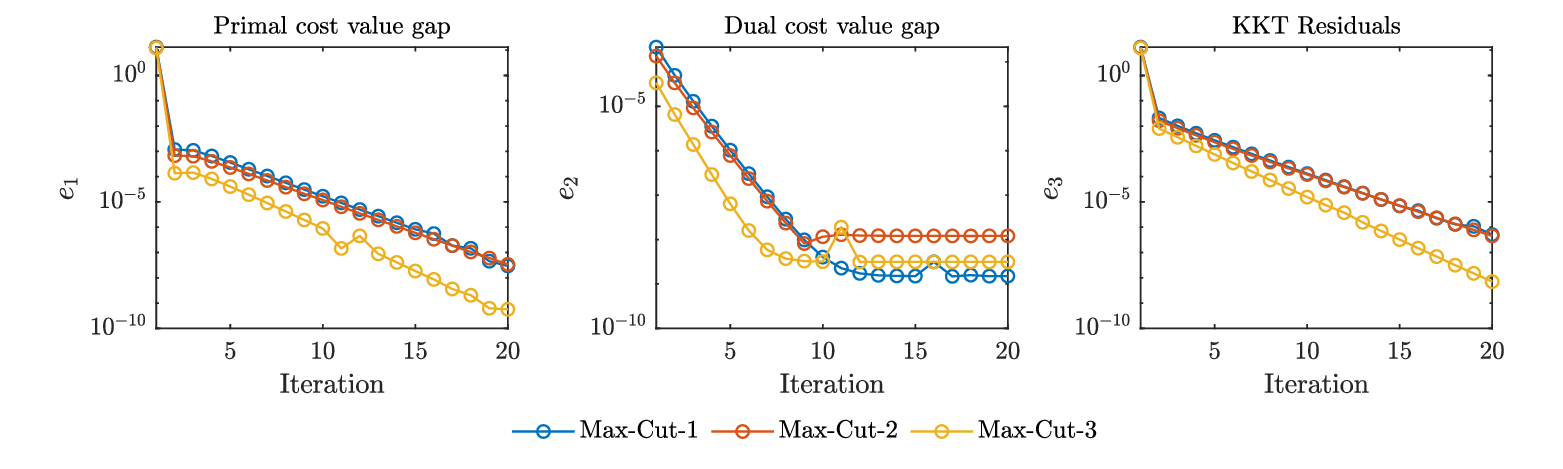}}
\includegraphics[width=1\textwidth]
{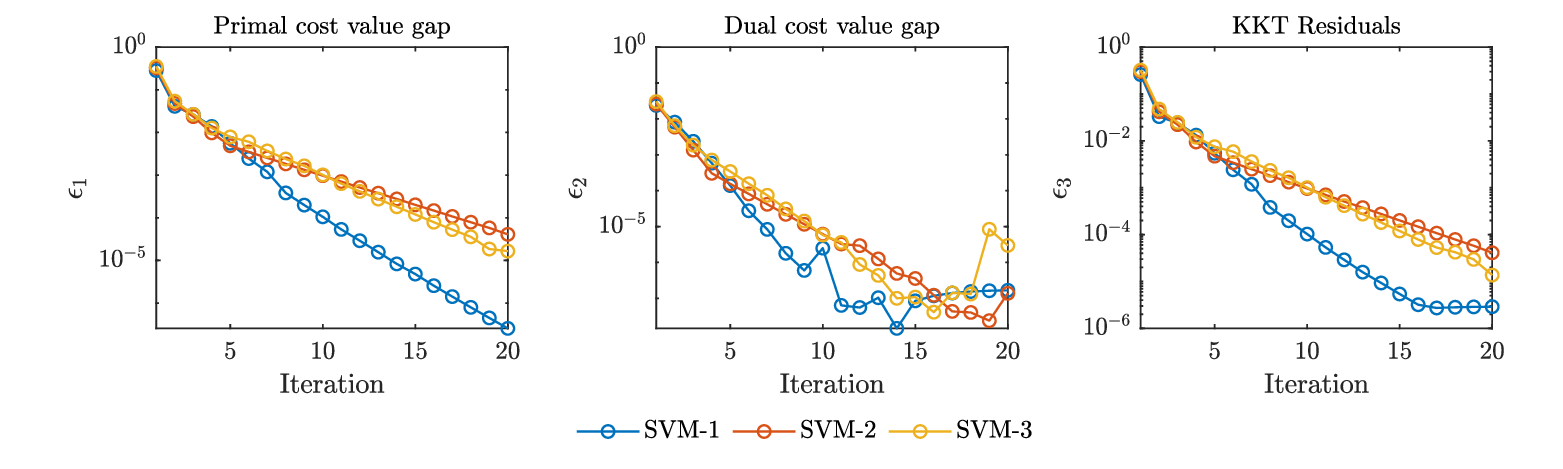}
\caption{Numerical convergence behavior of inexact ALM \cref{eq:alm-update-conic-primal-both} for Max-Cut and linear SVM. The symbol $\epsilon_3$ denotes the KKT residuals  $ \epsilon_3 = \max\{\eta_1,\eta_2,\eta_3,\eta_4,\eta_5\}$.}
\label{fig:numerical-experiment}
 \vspace{-4mm}
\end{figure}
\vspace{-1.5 mm}
\section{Conclusion}
\label{sec:conclusion}
In this paper, we have established the quadratic growth and error bound of two different variants \cref{eq:conic-r-p,eq:conic-r-d} of SDPs under the condition of strict complementarity. Central to our approach is the examination of the growth properties of the indicator and exact penalty functions.
By leveraging these new quadratic growth and error bounds, we establish the linear convergence of both primal and dual iterates of inexact ALMs when applied to semidefinite programs, under the usual assumption of strict complementarity and a bounded solution set on the primal side. Our result not only fills a void in the convergence theory of inexact ALMs but also offers valuable insights into the exceptional numerical performance of solvers rooted in ALMs, such as \textsf{SDPNAL+} \cite{yang2015sdpnal+}. We expect further interesting applications in machine learning and polynomial optimization \cite{zheng2021chordal} for inexact ALMs. 
\begin{ack}
This work is supported by NSF ECCS 2154650, NSF CMMI 2320697, and NSF CAREER 2340713. We would like to thank Dr. Xin Jiang for the discussions on exact penalty functions, and Mr. Pranav Reddy for his assistance in proofreading the manuscript and contributions to the appendix.  
\end{ack}

\bibliographystyle{unsrt}
\bibliography{references2,reference}

\clearpage

\numberwithin{equation}{section}
\numberwithin{example}{section}
\numberwithin{remark}{section}
\numberwithin{assumption}{section}
\numberwithin{theorem}{section}
\numberwithin{proposition}{section}
\numberwithin{lemma}{section}
\numberwithin{definition}{section}

\appendix
\tableofcontents

\vspace{10mm}

\noindent\textbf{\Large Appendix}

The appendix is divided into the following seven sections:
\begin{enumerate}
[leftmargin=*,align=left]
    \item \Cref{appendix-section:Optimization-Background} presents some preliminaries in conic optimization and standard ALMs.
    \item \cref{appendix-sec:exact-penalty} presents a self-contained proof for the exact penalty functions and a new characterized of the preimage of the exact penalty functions.
    \item \cref{appendix-sec:detail-general-growth} completes the proof of growth properties of the indicator function and exact penalty function.
    \item \cref{appendix-section:proof-QG-obj} details the proof of \cref{thm:QG-p} and presents a simple example to illustrate the growth property.
    \item  \cref{appendix:subsection-proof-KKT} completes the missing proofs in \cref{prop:KKT-residual}.
     \item \cref{appendix:ALMs-dual} discusses the convergence of ALMs applied to the dual problem \cref{eq:dual-conic}.
    \item \cref{appendix:sec:numerical-experiment} presents further details of numerical experiments in \cref{sec:numerical-experimetns}.
\end{enumerate}

\section{Optimization Background}
\label{appendix-section:Optimization-Background}

\paragraph{Notations} 
For clarity, we summarize common notations here. We use $\RR^n$ to denote the $n$-dimensional real space and $\sn$ to denote the space of real $n \times n$ symmetric matrices. The cone of nonnegative orthant is denoted as $\RRnp = \{x \in \RR^n \mid x_i \geq 0, \forall i = 1,\ldots,n\}$. The cone of positive and negative semidefinite matrices are defined as $\snp =\{ X \in \sn \mid v^\tr X v \geq 0,\forall v \in \RR^n \}$ and $\snn = -\snp$ respectively.  For vectors in $\RR^n$, the inner product $\innerproduct{\cdot}{\cdot}$ is defined as $\innerproduct{x}{y} = x^\tr y$ and the normal $\|\cdot\|$ stands for the $l_2$ norm. For matrices in $\sn$, the inner product $\innerproduct{\cdot}{\cdot}$ is defined as $\innerproduct{X}{Y} = \Trace(X^\tr Y)$ and the norm $\|\cdot\|_{\mathrm{op}}, \|\cdot\|_*$ and $\|\cdot\|$ denote the operator two norm, nuclear norm, and Frobenious norm respectively. Given a set $S \subseteq \RR^n$, the relative interior of $S$ is denoted as $\relint(S)$, and $\delta_{S}(x)$ is defined as the indicator function of $S$, i.e., $\delta_{S}(x) = 0$ if $x \in S$ and $\delta_{S}(x) = \infty$ otherwise. The closed ball with a radius $\mu$ around the center $x \in \RR^n$ is written as $\mathbb{B}(x,\mu) = \{y \in \RR^n \mid \|y-x\|\leq  \mu\}$.

\subsection{Standard notion of strict complementarity in SDPs} \label{subsection:LPs-SOCPs-SDPs-complementarity}
In this subsection, we review the standard algebraic notion definition for SDPs which is equivalent to \cref{def:dual-sc-conic}. A proof of the equivalence can be found in \cite[Appendix B]{ding2023strict}. 

\begin{definition}[Strict complementarity for SDPs]
    \label{def:SC-sdp}
    Consider the pair of primal and dual SDPs \cref{eq:primal-conic,eq:dual-conic}. We  say the strict complementarity holds if there exists a pair of primal and dual solutions $(\Xstar,\ystar, \Zstar)$ satisfying 
    \begin{equation} \label{eq:sc-sdp}
    \Xstar + \Zstar \in \sn_{++} ~\text{or}~\mathrm{rank}(\Xstar) + \mathrm{rank}(\Zstar) = n.
\end{equation}
\end{definition}
With the complementarity slackness $\innerproduct{\Xstar}{\Zstar} = 0$, we have $\Xstar\Zstar = \Zstar \Xstar = 0$. This means there exists an orthonormal matrix $Q \in \mathbb{R}^{n \times n}$ with $Q^{\tr} Q = I$, such that 
    \begin{equation} \label{eq:complementarity}
        \begin{aligned}
            X^\star = Q \cdot \mathrm{diag}(\lambda_1, \ldots, \lambda_n) \cdot Q^\tr, \quad          Z^\star = Q \cdot \mathrm{diag} (w_1, \ldots, w_n) \cdot Q^\tr
        \end{aligned}
    \end{equation}
    and $\lambda_i w_i = 0, i = 1,\ldots,n$. Then, \cref{eq:sc-sdp} implies that exactly one of the two conditions $\lambda_i = 0$ and $w_i = 0$ is true in \cref{eq:complementarity}. In other words, the eigenvalues of $X^\star$ and $Z^\star$ have a complementary sparsity pattern. 

\begin{remark}[Dual strict complementarity in the general case]
    The dual strict complementarity in \cref{def:dual-sc-conic} is consistent with  \cite[Defnition 2]{ding2023strict}. A variant of dual strict complementarity in the general case is given in \cite[Section 4]{drusvyatskiy2018error}, which requires that 
    \begin{equation} \label{eq:general-dual-sc}
        0 \in \relint(\partial g(\ystar)), 
    \end{equation}
     where $g(y)=\innerproduct{b}{y} - \delta_{\snp}(C-\Ajmap(y))$. \cref{def:dual-sc-conic} is also consistent with \cref{eq:general-dual-sc}. Indeed, let a dual optimal solution $(\ystar,\Zstar)$ satisfy \cref{eq:general-dual-sc}. Considering $\partial g(y) = b +\Amap(\mathcal{N}_{\snp}(C - \Ajmap(y)))$ and using \cite[Corollary 6.6.2 and Theorem 6.6]{rockafellar1997convex}, we have 
     $$
     0 \in \relint(b + \Amap(\mathcal{N}_{\snp}(\Zstar))) =  b + \relint(\Amap(\mathcal{N}_{\snp}(\Zstar)))) = b + \Amap (\relint(\mathcal{N}_{\snp}(\Zstar))),
     $$
    which means there exists a point $s \in \relint(\mathcal{N}_{\snp}(\Zstar))$ such that $0 = b + \Amap(s)$. Equivalently, there exists $\Xstar = -s \in \relint(\mathcal{N}_{(\snp)^\circ}(-\Zstar))$ (recall that $(\snp)^\circ = - \snp$) such that $\Amap(\Xstar) = b$. This is equivalent to \cref{def:dual-sc-conic}. \hfill $\square$
\end{remark}

\subsection{Augmented Lagrangian Methods (ALMs) for inequality constraints}
\label{appendix:ALM-inequality}
We here review some classic results of ALM. Consider a constrained convex optimization problem 
\begin{equation}
    \begin{aligned}
    \label{eq:inequality-optimization}
             \min_{x \in \mathcal{X}_0} \quad&f(x) \\
            \mathrm{subject~to} \quad& f_{i}(x) \leq 0, \quad \forall i =1,2,\ldots,m,
    \end{aligned}
\end{equation}
where $\mathcal{X}_0 \subseteq \RR^n$ is a closed convex set and  $f, f_i:\E \to \bar{\RR}$ are proper closed convex functions. We associate each inequality constraint in \cref{eq:inequality-optimization} with a dual variable $z_i \geq 0$, and the Lagrangian function for \cref{eq:inequality-optimization} is $L_{0}(x,z) = f(x) + \sum_{i=1}^m z_i f_i(x)$. The corresponding dual function~is 
\begin{equation}
    \label{eq:dual-function}
     g_0(z) = \inf_{x \in \mathcal{X}_0} L_{0}(x,z).
\end{equation}
One way to define the augmented Lagrangian for \cref{eq:inequality-optimization} is to transform the inequality constraints into equality constraints \cite[Chapter 4.7]{ruszczynski2011nonlinear}. 
After some manipulation (see \cite[Equation 4.79]{ruszczynski2011nonlinear} or \Cref{appendix:ALM-inequality-formulation}), the classical augmented Lagrangian with parameter $r > 0$ for \cref{eq:inequality-optimization} reads as  
\begin{equation}
    \label{eq:AL-inequality}
    L_{r}(x,z) = f(x) + \frac{1}{2r} \left(\|\Pi_{\RR^m_+}(z +  r f(x))\|^2 - \|z\|_2^2\right), 
\end{equation}
where $\Pi_{\RR^m_+}(\cdot)$ denotes the Euclidean projection onto $\RR^m_+$. Note that $L_{r}$ is differentiable in $z$ \cite{zarantonello1971projections}.  
Given an initial point $z_0 \in \RR^m$ and a sequence of positive
scalars $ r_0 \uparrow r_\infty < +\infty$, the inexact ALM generates a sequence of $\{x_k\}$ (primal variables) and $\{z_k\}$ (dual variables) as  
\begin{subequations}\label{eq:ALM-update-inequality}
    \begin{align}
        x_{k+1} & \approx \min_{x\in \mathcal{X}_0 } L_{r_k}(x,z_k), \label{eq:ALM-update-inequality-a} \\
        z_{k+1}& 
        = z_{k} + r_k\nabla_z L_{r_k}(x_{k+1},z_k) = \Pi_{\RR^m_+} (z_k + r_k f(x_{k+1})), \quad k = 0, 1, \ldots. \label{eq:ALM-update-inequality-b}
    \end{align}
\end{subequations}
The convergence of $x_k$ and $z_k$ depends on the inexactness in \cref{eq:ALM-update-inequality-a}. 
For this, the seminal work by Rockafellar \cite{rockafellar1976augmented} points out an important connection between PPM and ALM, summarized below.
\begin{lemma}[{\cite[Proposition 6]{rockafellar1976augmented}}]
\label{prop:PPM-ALM}
The dual iterate $z_{k+1}$ in \cref{eq:ALM-update-inequality-b} satisfies the relationship
    \begin{equation*}\|z_{k+1} -\prox_{r_k,-g_0}(z_k) \|^2/(2 r_k) \leq \bar{L}_{r_k}(x_{k+1},z_{k}) - \inf_{x\in \mathcal{X}_0 } {L}_{r_k}(x,z_k),
    \end{equation*}
    where $g_0$ is the Lagrangian dual function in \cref{eq:dual-function}, and $\bar{L}_r$ denotes an extended form of the augmented Lagrangian $L_r$ as $\bar{L}_{r}(x,z) = L_r(x,z), \text{if}~x\in \mathcal{X}_0$, otherwise $\bar{L}_r(x,z) = \infty$. 
\end{lemma}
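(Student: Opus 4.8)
The plan is to deduce the bound from two classical facts about the augmented Lagrangian \cref{eq:AL-inequality}: a ``dualized'' representation of $\bar L_r$ as a partial supremum over the multiplier, and the resulting strong-duality identity between $\inf_{x\in\mathcal X_0}\bar L_r(x,z)$ and a Moreau regularization of the dual function $g_0$ in \cref{eq:dual-function}.

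First I would show that for every $x\in\mathcal X_0$,
\[
\bar L_r(x,z)=\sup_{u\in\RR^m_+}\Big\{L_0(x,u)-\tfrac{1}{2r}\|u-z\|^2\Big\},
\]
with the supremum attained uniquely at $u=\Pi_{\RR^m_+}(z+rf(x))$. This is a completion-of-squares computation: since $L_0(x,u)=f(x)+\innerproduct{u}{(f_1(x),\dots,f_m(x))}$ is affine in $u$, the inner maximand is a strongly concave quadratic whose maximizer over $\RR^m_+$ is $\Pi_{\RR^m_+}(z+rf(x))$, and the Moreau decomposition of $z+rf(x)$ with respect to the polar pair $(\RR^m_+,-\RR^m_+)$ reproduces exactly the term $\tfrac1{2r}(\|\Pi_{\RR^m_+}(z+rf(x))\|^2-\|z\|^2)$ appearing in \cref{eq:AL-inequality}. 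Evaluating at $x=x_{k+1}$ and using the update \cref{eq:ALM-update-inequality-b} then gives $\bar L_{r_k}(x_{k+1},z_k)=L_0(x_{k+1},z_{k+1})-\tfrac1{2r_k}\|z_{k+1}-z_k\|^2$, with $z_{k+1}=\argmax_{u\in\RR^m_+}\{L_0(x_{k+1},u)-\tfrac1{2r_k}\|u-z_k\|^2\}$.

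Second, I would exchange $\inf_{x\in\mathcal X_0}$ and $\sup_{u\in\RR^m_+}$. Because the added term $-\tfrac1{2r}\|u-z\|^2$ makes the maximand coercive and upper semicontinuous in $u$, while it is convex and closed in $x$ on $\mathcal X_0$, a minimax theorem applies (equivalently, this is Rockafellar's dual-approach identity, cf.\ \cite{rockafellar1973dual,rockafellar1973multiplier}), yielding
\[
\inf_{x\in\mathcal X_0}\bar L_r(x,z)=\sup_{u\in\RR^m_+}\Big\{g_0(u)-\tfrac1{2r}\|u-z\|^2\Big\}=-\inf_{u}\Big\{-g_0(u)+\tfrac1{2r}\|u-z\|^2\Big\},
\]
where $-g_0$ carries the dual feasibility $u\in\RR^m_+$ in its domain, and the right-hand infimum is attained at the unique point $\bar u:=\prox_{r,-g_0}(z)\in\RR^m_+$ (see \cref{eq:PPM-subproblem} with $\alpha=r$). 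Third, I would combine the two: set $q(u):=L_0(x_{k+1},u)-\tfrac1{2r_k}\|u-z_k\|^2$, which is $\tfrac1{r_k}$-strongly concave with maximizer $z_{k+1}$ over $\RR^m_+$; first-order optimality together with strong concavity gives $q(\bar u)\le q(z_{k+1})-\tfrac1{2r_k}\|\bar u-z_{k+1}\|^2$ for $\bar u=\prox_{r_k,-g_0}(z_k)\in\RR^m_+$. Since $L_0(x_{k+1},\cdot)\ge g_0(\cdot)=\inf_{x\in\mathcal X_0}L_0(x,\cdot)$ pointwise, $q(\bar u)\ge g_0(\bar u)-\tfrac1{2r_k}\|\bar u-z_k\|^2=\inf_{x\in\mathcal X_0}\bar L_{r_k}(x,z_k)$ by the second step, whereas $q(z_{k+1})=\bar L_{r_k}(x_{k+1},z_k)$ by the first step. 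Rearranging the chain yields $\tfrac1{2r_k}\|z_{k+1}-\bar u\|^2\le \bar L_{r_k}(x_{k+1},z_k)-\inf_{x\in\mathcal X_0}L_{r_k}(x,z_k)$, which is the claim.

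The main obstacle is the second step, namely justifying the exchange of $\inf_x$ and $\sup_u$, i.e.\ strong duality between $\inf_{x\in\mathcal X_0}\bar L_r(x,z)$ and the Moreau regularization of $g_0$. Crucially, and in contrast to the original pair \cref{eq:inequality-optimization}, no constraint qualification is needed here because the proximal term regularizes the dual side; the care required lies in checking the technical hypotheses (properness and closedness of $L_0(\cdot,u)$ on $\mathcal X_0$, coercivity in $u$, and that the dual maximizer lies in $\RR^m_+$) so that a standard minimax result applies, or alternatively in invoking Rockafellar's dual-approach theorem directly. The remaining ingredients --- completing the square, the Moreau decomposition for the cone pair $(\RR^m_+,-\RR^m_+)$, and the strong-concavity inequality --- are elementary.
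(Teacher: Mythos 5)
The paper never proves this lemma itself --- it imports it verbatim as \cite[Proposition 6]{rockafellar1976augmented} --- but your argument is correct and is essentially the classical proof of that result: the dualized representation $\bar L_r(x,z)=\sup_{u\in\RR^m_+}\{L_0(x,u)-\tfrac{1}{2r}\|u-z\|^2\}$ with maximizer $\Pi_{\RR^m_+}(z+rf(x))=z_{k+1}$, the identity $\inf_{x\in\mathcal{X}_0}\bar L_r(x,z)=\sup_{u}\{g_0(u)-\tfrac{1}{2r}\|u-z\|^2\}$ (exactly the Moreau-envelope fact the paper itself quotes as \cref{prop:Moreau-envelope} from \cite[Theorem 3.2]{rockafellar1973dual}), and the $\tfrac{1}{r_k}$-strong concavity of the inner maximand evaluated at $\bar u=\prox_{r_k,-g_0}(z_k)$. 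Two small remarks: the case $x_{k+1}\notin\mathcal{X}_0$ is vacuous since then $\bar L_{r_k}(x_{k+1},z_k)=+\infty$, and the inf/sup exchange in your second step indeed requires no constraint qualification --- with the perturbation function $p(v)=\inf\{f(x): x\in\mathcal{X}_0,\ f_i(x)\le v_i\}$ one has $\inf_{x\in\mathcal{X}_0}\bar L_r(x,z)=\inf_v\{p(v)+\innerproduct{z}{v}+\tfrac{r}{2}\|v\|^2\}$, and Fenchel duality with the everywhere-finite quadratic conjugate partner yields the claimed equality for any proper convex $p$, confirming your observation that the proximal term does the regularizing.
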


Note the subtle difference between \cref{prop:PPM-ALM,prop:PPM-ALM-review}. If \cref{eq:ALM-update-inequality-a} is updated exactly, \cref{prop:PPM-ALM} confirms that $z_{k+1} =  \prox_{c_k,-g_0}(z_k)$. In other words, the dual iterates of the ALM are the same as the iterates from PPM  on the dual problem \cref{eq:dual-function}. Moreover, by controlling the inexactness \cref{eq:ALM-update-inequality-a}, the convergence of the dual variables $z_k$ can be naturally guaranteed from \cref{prop:convergence-ppm}. Motivated by  \cref{eq:stopping-1,eq:stopping-2}, the following two criteria are commonly used in \cref{eq:ALM-update-inequality-a} 
 \begin{align}
         \bar{L}_{c_k}(x_{k+1},z_k)- \min_{x\in \mathcal{X}_0 } \bar{L}_{r_k}(x,z_k) & \leq 
         \epsilon_k^2/(2r_k), \quad \textstyle \sum_{k=1}^\infty  \epsilon_k < \infty, \tag{$\text{A}$} \label{eq:stopping-alm-1} \\
         \bar{L}_{c_k}(x_{k+1},z_k)- \min_{x\in \mathcal{X}_0} \bar{L}_{r_k}(x,z_k) & \leq 
         \delta_k^2 \| z_{k+1} -z_k\|^2/(2r_k), \quad  \textstyle \sum_{k=1}^\infty \delta_k < \infty, \tag{$\text{B}$}
        \label{eq:stopping-alm-2}
    \end{align}
    where $\{\epsilon_k\}_{k\geq0}$ and $\{\delta_k\}_{k\geq0}$ are two sequences of inexactness. Note that \cref{eq:stopping-alm-1,eq:stopping-alm-2} guarantee the dual variables $z_k$ from ALM \cref{eq:ALM-update-inequality} satisfy \cref{eq:stopping-1,eq:stopping-2} with the inexact PPM \cref{eq:iPPM}, respectively. Let $S = \argmax_{z \geq 0} g_0(z)$ denote the set of optimal dual solutions for \cref{eq:inequality-optimization}. Combining \cref{prop:PPM-ALM} with \cref{prop:convergence-ppm}, the iterates from the inexact ALM \cref{eq:ALM-update-inequality} has the following convergence guarantees.
\begin{lemma}
    \label{prop:alm-inequility-convergence}
    Assume $S \neq \emptyset$. Let $\{x_k,z_k\}$ be a sequence generated by the inexact ALM \cref{eq:ALM-update-inequality}.  
       \begin{enumerate}[label=(\alph*),leftmargin=*,align=left,noitemsep]
        \item 
    If criterion \cref{eq:stopping-alm-1} is used, then $\lim_{k \to \infty} z_k = z_\infty\! \in\! S$ (i.e., $z_k$ converges to a dual optimal~solution), and the primal sequence $\{x_k\}$ satisfies $f_i(x_{k+1}) \leq \|z_k - z_{k+1}\| /c_k, i = 1, \ldots,m$ (primal feasibility), and $
            \innerproduct{c}{x_{k+1}} - p^\star  \leq 
            (\epsilon_k^2 + \|z_k-z_{k+1}\|^2)/(2c_k)$ (cost value gap).  
    \item If both criteria \cref{eq:stopping-alm-1,eq:stopping-alm-2} are used, and the dual function $-g_0$ in \cref{eq:dual-function} further satisfies \cref{eq:QG} with a coefficient $\muq > 0$, then there exists a $\hat{k}> 0 $ such that $\Dist(z_{k+1}, S)\leq \mu_k \cdot \Dist(z_{k}, S),\; \forall k \geq \hat{k},$ 
        where $\mu_k = \frac{\theta_k + 2 \delta_k}{1-\delta_k}$  with ${\theta}_{k} = 1/\sqrt{ c_k \muq \!+\!1} \!< \!1.$ 
    \end{enumerate}
\end{lemma}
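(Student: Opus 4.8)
The plan is to derive everything from the correspondence between the inexact ALM~\eqref{eq:ALM-update-inequality} and the inexact PPM~\eqref{eq:iPPM} applied to the negative dual function $-g_0$ of~\eqref{eq:dual-function}, using \Cref{prop:PPM-ALM} as the bridge and \Cref{prop:convergence-ppm} as the black box. First I would observe that, since $S = \argmax_{z\geq 0} g_0(z) = \argmin_{z} (-g_0)(z)$ is assumed nonempty and $-g_0$ is proper closed convex, \Cref{prop:convergence-ppm} is applicable once we certify that the ALM dual iterates $z_k$ satisfy the PPM inexactness criteria. For part~(a): taking square roots in \Cref{prop:PPM-ALM} together with stopping criterion~\eqref{eq:stopping-alm-1} gives $\|z_{k+1} - \prox_{r_k,-g_0}(z_k)\| \leq \epsilon_k$ with $\sum_k \epsilon_k < \infty$, which is exactly criterion~\eqref{eq:stopping-1} for the sequence $\{z_k\}$; then \Cref{prop:convergence-ppm}(a) yields $z_k \to z_\infty \in S$.

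For the primal estimates in part~(a), I would argue directly from the update rule~\eqref{eq:ALM-update-inequality-b}. Writing $z_{k+1} = \Pi_{\RR^m_+}(z_k + r_k f(x_{k+1}))$ componentwise and using nonexpansiveness/monotonicity of the projection onto $\RR^m_+$: for each coordinate, $z_{k+1,i} \geq z_{k,i} + r_k f_i(x_{k+1})$ when $z_{k,i} + r_k f_i(x_{k+1}) \geq 0$, and in any case $z_{k+1,i} - z_{k,i} \geq r_k f_i(x_{k+1}) - \max\{0, -(z_{k,i}+r_kf_i(x_{k+1}))\}$; a clean way is to note $r_k f_i(x_{k+1}) = z_{k+1,i} - z_{k,i} + \big(\Pi_{\RR^m_+}(\cdot) - (z_{k,i}+r_kf_i(x_{k+1}))\big)_i$ and the last (Moreau-type) term is $\geq 0$ only when it should decrease $f_i$, giving $f_i(x_{k+1}) \leq \|z_k - z_{k+1}\|/r_k$ after bounding the coordinate difference by the norm. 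The cost-value gap follows from the standard ALM identity: $L_{r_k}(x_{k+1},z_k) = f(x_{k+1}) + \tfrac{1}{2r_k}(\|z_{k+1}\|^2 - \|z_k\|^2)$ by~\eqref{eq:AL-inequality} and~\eqref{eq:ALM-update-inequality-b}, combined with $\min_x L_{r_k}(x,z_k) = g_0(z_k) \leq p^\star$ (weak duality with the augmented Lagrangian, or $\min_x L_{r_k} \le p^\star$ since the augmentation vanishes at feasible points) and stopping criterion~\eqref{eq:stopping-alm-1}; rearranging yields $\innerproduct{c}{x_{k+1}} - p^\star \leq (\epsilon_k^2 + \|z_k - z_{k+1}\|^2)/(2r_k)$. (I would double-check whether the intended bound uses $c_k$ or $r_k$ in the denominators, as the statement mixes notation; I will treat $c_k = r_k$.)

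For part~(b), stopping criterion~\eqref{eq:stopping-alm-2} gives, again via \Cref{prop:PPM-ALM} and a square root, $\|z_{k+1} - \prox_{r_k,-g_0}(z_k)\| \leq \delta_k \|z_{k+1} - z_k\|$ with $\sum_k \delta_k < \infty$, i.e.\ criterion~\eqref{eq:stopping-2}; since~\eqref{eq:stopping-alm-1} also holds, both PPM criteria~\eqref{eq:stopping-1},\eqref{eq:stopping-2} are in force for $\{z_k\}$. The hypothesis that $-g_0$ satisfies~\eqref{eq:QG} with coefficient $\muq$ is precisely what \Cref{prop:convergence-ppm}(b) needs, so we directly obtain $\Dist(z_{k+1},S) \leq \hat\theta_k \Dist(z_k,S)$ for all $k \geq \hat k$ with $\hat\theta_k = (\theta_k + 2\delta_k)/(1-\delta_k)$ and $\theta_k = 1/\sqrt{r_k\muq + 1} < 1$; this is the claimed $\mu_k$. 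The main obstacle is not conceptual but bookkeeping: carefully justifying the primal feasibility bound $f_i(x_{k+1}) \leq \|z_k-z_{k+1}\|/r_k$ from the projection formula (the Moreau decomposition argument must be spelled out to get the one-sided inequality in the right direction for every coordinate, including those where the constraint is slack), and making sure the inexact subproblem error does not contaminate this bound since $z_{k+1}$ is computed from the \emph{inexact} $x_{k+1}$ rather than the exact minimizer. Everything else is a direct invocation of \Cref{prop:PPM-ALM} and \Cref{prop:convergence-ppm}.
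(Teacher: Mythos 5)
Your overall route is the one the paper takes: use \Cref{prop:PPM-ALM} to certify that the dual iterates $\{z_k\}$ satisfy the PPM inexactness criteria \cref{eq:stopping-1} and \cref{eq:stopping-2}, then invoke \Cref{prop:convergence-ppm} for the dual convergence in (a) and the linear rate in (b); the primal feasibility bound is obtained, as you indicate, from $z_{k+1,i}=\max\{0,\,z_{k,i}+r_kf_i(x_{k+1})\}\ge z_{k,i}+r_kf_i(x_{k+1})$, and your worry about the inexactness of $x_{k+1}$ contaminating it is unfounded since the argument only uses the exact formula for $z_{k+1}$ in terms of whatever $x_{k+1}$ the subproblem returned. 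All of this matches the paper, which simply cites \Cref{prop:convergence-ppm} and \cite[Theorem 4]{rockafellar1976augmented}.

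There is, however, a genuine gap in your cost-value step. From $L_{r_k}(x_{k+1},z_k)=f(x_{k+1})+\tfrac{1}{2r_k}\bigl(\|z_{k+1}\|^2-\|z_k\|^2\bigr)$, criterion \cref{eq:stopping-alm-1}, and $\min_x L_{r_k}(x,z_k)\le p^\star$, rearranging yields
$f(x_{k+1})-p^\star\le\bigl(\epsilon_k^2+\|z_k\|^2-\|z_{k+1}\|^2\bigr)/(2r_k)$,
\emph{not} the displayed $\bigl(\epsilon_k^2+\|z_k-z_{k+1}\|^2\bigr)/(2r_k)$. The two right-hand sides differ by $\langle z_{k+1},\,z_k-z_{k+1}\rangle/r_k$, which equals $-\langle z_{k+1},f(x_{k+1})\rangle$ and has no definite sign, so the final ``rearranging yields'' does not go through; indeed the displayed bound can fail. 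Take $m=n=1$, $f(x)=x$, $f_1(x)=-x$, $c_k=1$, $z_k=2$, and solve the subproblem exactly ($\epsilon_k=0$): then $x_{k+1}=1$, $z_{k+1}=1$, $f(x_{k+1})-p^\star=1$, while $(\epsilon_k^2+\|z_k-z_{k+1}\|^2)/(2c_k)=1/2$. Your derivation is correct up to, and should stop at, the $\|z_k\|^2-\|z_{k+1}\|^2$ form, which is precisely how the paper states the analogous SDP bound in \Cref{prop:asymptotic-conic-alm}(b) and which still tends to zero since $z_k$ converges; the version printed in the lemma drops the complementarity term and should be read accordingly. A minor additional slip: $\min_x L_{r_k}(x,z_k)$ is the Moreau envelope $\max_u\{g_0(u)-\tfrac{1}{2r_k}\|u-z_k\|^2\}$, not $g_0(z_k)$ itself; the inequality $\le p^\star$ you actually need still holds (e.g.\ because $L_{r_k}(x,z_k)\le f(x)$ at every feasible $x$ when $z_k\ge 0$), so this does not affect the rest.
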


The first asymptotic convergence for $\lim_{k \to \infty} z_k = z_\infty\! \in\! S$ directly comes from \cref{prop:convergence-ppm}, and the asymptotic convergence in terms of primal feasibility and cost value gap is taken from \cite[Theorem 4]{rockafellar1976augmented}. The second linear convergence in \cref{prop:alm-inequility-convergence} is only guaranteed for the \textit{dual} iterates $z_k$, which is also implied by \cref{prop:convergence-ppm}. However, the rate of the \textit{primal} iterates $x_k$ is unclear even with the quadratic growth condition. With a much stronger condition on the Lagrangian, the classical result in \cite[Theorem 5]{rockafellar1976augmented} can ensure linear convergence of $x_k$.  

\subsection{Augmented Lagrange functions for inequality constraints} 
\label{appendix:ALM-inequality-formulation}
One straightforward way to define the augmented Lagrangian for the inequality form  \cref{eq:inequality-optimization} is to first transform \cref{eq:inequality-optimization} into the following equivalent problem \begin{equation}
    \begin{aligned}
            \min_{x \in \mathcal{X}_0, v \geq  0} \quad&f(x) \\
            \mathrm{subject~to} \quad& f_{i}(x) + v = 0, \quad \forall i =1,2,\ldots,m.
    \end{aligned}
\end{equation}
Then the augmented Lagrangian for the above equality-constrained problem with a penalty term $r > 0$ is defined as 
\begin{equation*}
\begin{aligned}
    L_{r}(x,v,z) &= f(x) + \sum_{i=1}^m z_i (f_i(x)+v_i) + \frac{r}{2} \sum_{i=1}^m (f_i(x)+v_i)^2 \\
    &=f(x) + \frac{1}{2r}\left(\|z + r(f(x)+v)\|^2 - \|z\|^2\right).
\end{aligned}
\end{equation*}
Note that $L_{r}$ is a decoupled quadratic function in terms of $v_i$. Minimizing $L_{r}(x,v,z)$ with respect to $v \geq 0$ with a  fixed $x$ and $y$, we derive the optimal $v$ as 
\begin{equation*}
    v_{i} = \max \left \{ 0 , - \frac{z_i}{r} - f_i(x) \right\},\; \forall i = 1,\ldots,m.
\end{equation*}
Upon plugging the optimal $v$ in the $L_r$ with some arrangements, we arrive at the classical augmented Lagrangian for the  problem with inequality constraints \cref{eq:inequality-optimization}
\begin{equation*}
    L_{r}(x,z) = f(x) + \frac{1}{2r} \left(\|\Pi_{\RR^m_+}(z +  r f(x))\|^2 - \|z\|_2\right).
\end{equation*}

\subsection{Bounded Linear regularity and regularity conditions} \label{appendix:BLR}
We review an important concept: \textit{bounded linear regularity} of a collection of closed convex sets. 
\begin{definition}[\textbf{Bounded linear regularity  \cite[Definition 5.6]{bauschke1996projection}}]
    \label{def:blr}
    Let $D_1, \ldots, D_m \subseteq \mathbb{R}^{n}$ be closed convex set. Suppose $D:= D_1 \cap \cdots \cap D_m $ is nonempty. The collection $\{D_1, \ldots, D_m\}$ is called \textit{boundedly linear regular} if for every bounded set $\mathcal{V} \subseteq \mathbb{R}^n$, there exists a constant $\kappa > 0$ such that
    \begin{equation*}
        \Dist(x,D) \leq \kappa \max \{\Dist(x,D_1),\ldots,\Dist(x,D_m) \}, \; \forall x \in \mathcal{V}.
    \end{equation*}
\end{definition}

Bounded linear regularity allows us to bound (up to some constant) the distance to the intersection $D$ (which can be complicated) by the maximum of the distance to $D_i$, which is often easier to compute. The following result shows a sufficient condition for bounded linear regularity.
\begin{lemma}[{\cite[Collorary 3]{bauschke1999strong}}]
    \label{prop:blr-sufficient}
    Let $D_1,\ldots, D_m \subseteq \mathbb{R}^n$ be closed convex set. Suppose that $D_1,\ldots, D_r (r \leq m)$ are polyhedra. Then $\{D_1,\ldots,D_m\}$ is boundedly linear regular if 
    \begin{equation*}
        \left(\cap_{i=1,2,\ldots,r} D_i\right) \bigcap \left(\cap_{i=r+1,\ldots,m} \relint(D_i)\right) \neq \emptyset,
    \end{equation*}
    where $\relint$ denotes the relative interior.
\end{lemma}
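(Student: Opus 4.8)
The statement is the classical sufficient condition for bounded linear regularity (\cref{def:blr}) due to Bauschke--Borwein--Li; the plan is to reduce it to two elementary ingredients --- Hoffman's error bound for finitely many polyhedra, and a ``scale toward a common relative-interior point'' argument --- and then chain them.

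\emph{Step 1 (collapse the polyhedral part).} Since $D_1,\dots,D_r$ are polyhedra, each is cut out by finitely many affine inequalities, so Hoffman's lemma yields a \emph{global} constant $\kappa_0$ (independent of any bounded set) with $\Dist(x,P)\le\kappa_0\max_{i\le r}\Dist(x,D_i)$ for all $x\in\RR^n$, where $P:=D_1\cap\dots\cap D_r$. Hence it suffices to bound $\Dist(x,D)$ by $\max\{\Dist(x,P),\Dist(x,D_{r+1}),\dots,\Dist(x,D_m)\}$ on an arbitrary bounded set.

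\emph{Step 2 (collapse the ``nice'' part to one convex set).} Pick $z\in P\cap\bigcap_{i>r}\relint D_i$ and translate so $z=0$; set $C:=D_{r+1}\cap\dots\cap D_m$. By Rockafellar's relative-interior intersection formula, $\relint C=\bigcap_{i>r}\relint D_i\ni 0$ and $\mathrm{aff}(C)=\bigcap_{i>r}\mathrm{aff}(D_i)$, so there are radii $\rho,\rho_i>0$ with $\mathbb{B}(0,\rho)\cap\mathrm{aff}(C)\subseteq C$ and $\mathbb{B}(0,\rho_i)\cap\mathrm{aff}(D_i)\subseteq D_i$. To see $\{D_{r+1},\dots,D_m\}$ is boundedly linearly regular: given $x$ in a bounded set, first use Hoffman on the affine subspaces $\mathrm{aff}(D_i)$ (which all contain $0$) to reach $\mathrm{aff}(C)$ at cost $O(\max_{i>r}\Dist(x,D_i))$; then, within each $\mathrm{aff}(D_i)$, contract the resulting point toward $0$ by a factor $t_i=\Theta(\text{its distance to }D_i)$ chosen just large enough to enter $D_i$; taking $t=\max_i t_i$ enters every $D_i$ simultaneously, producing a point of $C$ within $O(\max_{i>r}\Dist(x,D_i))$ of $x$.

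\emph{Step 3 (merge the polyhedron and the convex set, then chain).} It remains to show $\Dist(x,P\cap C)\le\kappa_3\max\{\Dist(x,P),\Dist(x,C)\}$ on a bounded set, given $0\in P$ and $0\in\relint C$. Project $x$ onto the polyhedron $P$; apply Hoffman to $\{P,\mathrm{aff}(C)\}$ (they meet at $0$) to obtain a point of $P\cap\mathrm{aff}(C)$ paying $O(\Dist(x,P)+\Dist(x,C))$; finally contract that point toward $0$ inside $\mathrm{aff}(C)$ by a factor $\Theta(\text{its distance to }C)$ to land in $C$ while remaining in $P$ (the segment to $0$ stays in $P$ since $0\in P$). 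All displacements are $O(\max\{\Dist(x,P),\Dist(x,C)\})$ with constant depending on $\rho$, the Hoffman constants, and $\sup_{x\in\mathcal V}\|x\|$. Combining Steps~1--3 gives $\Dist(x,D)\le\kappa_3\max\{\kappa_0,\kappa_2\}\max_i\Dist(x,D_i)$ on each bounded $\mathcal V$, as required.

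\emph{Main obstacle.} The one nontrivial estimate, used twice, is the scaling bound: if $w\in\mathrm{aff}(C)$ with $\Dist(w,C)\le\beta$, how far toward the center $0$ must one move to enter $C$? Writing $w=c+v$ with $c\in C$, $\|v\|\le\beta$, and $u:=\rho v/\|v\|\in C$, one checks that $(1-t)w=(1-t)c+\tfrac{(1-t)\|v\|}{\rho}\,u+(\text{remaining weight})\cdot 0$ is a convex combination of points of $C$ as soon as $t\ge\beta/(\rho+\beta)$. This is the only place the radius $\rho$ from the relative-interior hypothesis is genuinely used, and it is precisely why the polyhedra must be peeled off first (they need not contain $0$ in their interiors). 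Everything else is triangle inequalities and bookkeeping of the constants over the bounded set $\mathcal V$.
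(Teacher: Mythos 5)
Your proposal is correct, but note that the paper does not actually prove \cref{prop:blr-sufficient}: it is imported verbatim as \cite[Corollary 3]{bauschke1999strong}, so there is no in-paper argument to compare against. What you have written is essentially a self-contained reconstruction of the classical Bauschke--Borwein--Li proof, organized as (i) global linear regularity of finitely many polyhedra via Hoffman, (ii) bounded linear regularity of the non-polyhedral sets via the common relative-interior point and the scaling estimate $t\geq\|v\|/(\rho+\|v\|)$, and (iii) a final merge of the polyhedron $P$ with $C$ using that the contraction toward $0\in P$ preserves membership in $P$. I checked the three steps and the chaining of constants over a bounded set $\mathcal V$; they go through. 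Two small points deserve explicit justification if you write this out in full: first, step (i) is not Hoffman's lemma verbatim but its standard corollary (one must pass from $\Dist(x,D_i)$ to the residuals $\|(A_ix-b_i)_+\|$ via Lipschitz continuity of the residual map, which vanishes on $D_i$); second, the identity $\mathrm{aff}(C)=\bigcap_{i>r}\mathrm{aff}(D_i)$ is not literally Rockafellar's Theorem 6.5 (which gives $\relint C=\bigcap\relint D_i$) --- it follows from the inclusion $\mathbb{B}(0,\rho)\cap\bigl(\bigcap_i \mathrm{aff}(D_i)\bigr)\subseteq C$ that your radii $\rho_i$ already provide, so you should state it that way. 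With those two clarifications your sketch is a complete and correct proof of the cited result.
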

This result indicates that a collection of closed convex sets is boundedly linear regular if the relative interiors of non-polyhedral sets have a non-empty intersection with the rest of the polyhedra. \cref{prop:blr-sufficient} plays a key role in the proof of \cref{theorem:growth-error-bound,theorem:growth-error-bound-dual}. In particular, as we will see in \cref{subsec:proof-thm:QG-p}, the optimal solution set of \cref{eq:primal-conic}, i.e., $\psolution$, can be characterized as 
\begin{equation*}
    \psolution = \mathcal{X}_0 \cap (\partial g)^{-1}(-\Zstar),\quad \forall (\ystar,\Zstar) \in \dsolution,
\end{equation*}
where $\mathcal{X}_0 = \{X \in \snp \mid \Amap(X) = b\}$. Thus, assuming dual strict complementarity (there exists $\Xstar \in  \relint ((\partial g)^{-1}(-\Zstar))$), given a compact set $\mathcal{U} \subseteq \sn$ containing $\Xstar$, \cref{prop:blr-sufficient} ensures the following holds 
\begin{equation*}
    \Dist(X,\psolution) \leq \alpha_1 \Dist(X,\mathcal{X}_0) + \alpha_2 \Dist(X,(\partial g)^{-1}(-\Zstar)), \; \forall X \in \mathcal{U},
\end{equation*}
where $\alpha_1$ and $\alpha_2$ are two positive constants.

\section{Exact penalty forms of conic optimization}
\label{appendix-sec:exact-penalty}
In this section, we characterize the connection between exact penalty functions and indicator functions \cref{eq:exact-penalty-function}. Most importantly, we give a simple proof for the exact penalty functions. The main result is stated in \cref{prop:exact-penalty-fucntion}. We first state the preimage characterization of subdifferential mapping of the exact penalty function in \cref{prop:preimage-conek}, which is a more general result of the first part of \cref{lemma:general-growth-g(x)} (i.e., \cref{eq:pre-image-exact-penalty}). The second part of \cref{lemma:general-growth-g(x)} (i.e., \cref{eq:penalty-GQG-primal-new}) is postponed to \cref{appendix-sec:detail-general-growth}.

\subsection{Subdifferential of the exact penalty function} 

\begin{proposition}
    \label{prop:preimage-conek}
    Let $Z \in \snp$ and define
    \begin{equation}
     \label{eq:exact-penalty-function-1}
    \begin{aligned}
    l(X) = \rho \max\{0,\lammax(-X)\}~\text{with}~ \rho >  \Trace(Z).
    \end{aligned}
    \end{equation}
Then for all $S \in \snp$ satisfying $\rho > \Trace(S)$,
it holds that
    \begin{equation}
    \label{eq:preimage-normal-coneK}
        (\partial l)^{-1}(-S) = (\delta_{\snp})^{-1}(-S) = \mathcal{N}_{(\snp)^{\circ}}(-S).
    \end{equation}
\end{proposition}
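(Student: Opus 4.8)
The plan is to recognize the penalty $l$ as a support function, convert the computation of $(\partial l)^{-1}$ into a normal-cone computation, and then exploit the slack in the hypothesis $\rho>\Trace(S)$ to reduce that normal cone to the one of $\snn=(\snp)^{\circ}$.

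\emph{Step 1: write $l$ as a support function.} Starting from the variational formula $\lammax(-X)=\max\{\innerproduct{-X}{P}\mid P\in\snp,\ \Trace P=1\}$ and allowing $P=0$, one gets $\max\{0,\lammax(-X)\}=\max\{\innerproduct{-X}{P}\mid P\in\snp,\ \Trace P\le1\}$; rescaling $Q=\rho P$ and passing to $R=-Q$ then yields
\begin{equation*}
 l(X)=\sup_{R\in\mathcal{D}}\innerproduct{X}{R}=:\sigma_{\mathcal{D}}(X),\qquad \mathcal{D}:=\{R\in\snn\mid \Trace R\ge-\rho\},
\end{equation*}
so $l$ is the support function of the nonempty compact convex set $\mathcal{D}$ (the boundedness of $\mathcal{D}$ is consistent with $l$ being finite valued everywhere).

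\emph{Steps 2--4: invert the subdifferential.} Since $\mathcal{D}$ is closed and convex, $l=\delta_{\mathcal{D}}^{\ast}$, hence $l^{\ast}=\delta_{\mathcal{D}}$, and by the conjugate--subgradient correspondence $(\partial l)^{-1}=\partial l^{\ast}=\partial\delta_{\mathcal{D}}=\mathcal{N}_{\mathcal{D}}$; in particular $(\partial l)^{-1}(-S)=\mathcal{N}_{\mathcal{D}}(-S)$. Next, since $S\in\snp$ we have $-S\in\snn$, and since $\rho>\Trace(S)$ we have $\Trace(-S)>-\rho$, i.e. $-S$ lies in $\mathcal{D}$ with the linear inequality \emph{strictly} satisfied. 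Writing $\mathcal{D}=\snn\cap H$ with $H=\{R\mid\Trace R\ge-\rho\}$ a closed half-space and $-S\in\snn\cap\mathrm{int}(H)$, locality of normal cones (equivalently, the normal-cone sum rule under this Slater-type condition together with $\mathcal{N}_H(-S)=\{0\}$) gives $\mathcal{N}_{\mathcal{D}}(-S)=\mathcal{N}_{\snn}(-S)$. Finally $\snn=(\snp)^{\circ}$, and by the symmetry of normal cones of mutually polar cones, $\mathcal{N}_{(\snp)^{\circ}}(-S)=\{X\mid -S\in\mathcal{N}_{\snp}(X)\}=(\partial\delta_{\snp})^{-1}(-S)$. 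Chaining these identities proves $(\partial l)^{-1}(-S)=(\partial\delta_{\snp})^{-1}(-S)=\mathcal{N}_{(\snp)^{\circ}}(-S)$.

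\emph{Main obstacle and remarks.} The only real subtlety is the reduction $\mathcal{N}_{\mathcal{D}}(-S)=\mathcal{N}_{\snn}(-S)$: one must argue cleanly that the extra trace inequality --- which is precisely what makes $l$ an \emph{exact} penalty --- drops out of the normal cone exactly when $\rho$ exceeds $\Trace(S)$; the rest is routine convex analysis. (A direct alternative avoiding support functions is to differentiate $l$ via the max rule and the known subdifferential of $\lammax$, but the support-function route is shorter and makes the promised connection to the indicator $\delta_{\snp}$ transparent.) I also note that the stated hypothesis $\rho>\Trace(Z)$ only keeps $l$ in the intended regime and is not otherwise used; the operative assumption throughout Steps~2--4 is $\rho>\Trace(S)$, which is what produces the strict inequality and thus drives the whole argument.
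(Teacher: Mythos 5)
Your proposal is correct, and it takes a genuinely different route from the paper's. The paper proves this proposition by invoking the explicit formula for $\partial l$ (via Overton's characterization of $\partial\lammax$ and the max-rule), and then verifying by a direct eigendecomposition argument that the preimage set $\{X \mid -S \in \partial l(X)\}$ coincides with the complementary face $\mathcal{N}_{\snn}(-S)$; there, the hypothesis $\rho > \Trace(S)$ enters by forcing $\Trace(S/\rho) < 1$, which rules out the branch of the subdifferential formula corresponding to $\lammin(X) < 0$ (where the trace would have to equal one) and confines the preimage to the boundary of $\snp$. You instead identify $l$ as the support function $\sigma_{\mathcal{D}}$ of $\mathcal{D} = \{R \in \snn \mid \Trace R \ge -\rho\}$, use the conjugate--subgradient correspondence to get $(\partial l)^{-1} = \mathcal{N}_{\mathcal{D}}$, and observe that $\rho > \Trace(S)$ makes the trace constraint inactive at $-S$, so $\mathcal{N}_{\mathcal{D}}(-S) = \mathcal{N}_{\snn}(-S)$ by locality of normal cones; the final identification with $(\partial\delta_{\snp})^{-1}(-S)$ is the standard normal-cone flip for the self-dual cone $\snp$. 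Each step of yours is sound, and your closing remark is accurate: the hypothesis $\rho > \Trace(Z)$ in the statement plays no role beyond fixing the regime of $\rho$, exactly as in the paper's proof. What your route buys is brevity and a transparent explanation of \emph{why} the trace threshold is the exact-penalty threshold (it is precisely the condition for $-S$ to avoid the boundary of the trace constraint in $\mathcal{D}$); what the paper's route buys is an explicit description of $\partial l$ and of the preimage set in eigenvector coordinates, which it reuses in the growth estimates of its Appendix D.
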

One can see that \cref{eq:pre-image-exact-penalty}
is a special case of \cref{prop:preimage-conek} with $S = \barZ $. We postpone the proof of \cref{prop:preimage-conek} to \cref{subsec:proof-preimage}. Be aware that \cref{eq:preimage-normal-coneK} does not mean the mappings $(\partial l)^{-1}$ and $(\delta_{\snp})^{-1}$ are the same for each point in the domain, while the equality only holds for certain points satisfying the relationship with the penalty coefficient $\rho$ in \cref{eq:exact-penalty-function-1}. We use a simple example to illustrate this subtlety.
\begin{example}
    Consider $l : \RR \to \RR$ as $l(x) = 2 \max \{ 0 , -x\}$ which is the scalar version of the function in \cref{eq:exact-penalty-function-1} with $\rho = 2$.
    The subdifferential $\partial l: \RR \rightrightarrows \RR$ and $\partial \delta_{\RR_+}= \mathcal{N}_{\RR_+}$ can be computed as 
    \begin{equation*}
        \partial l(x) = \begin{cases}
            0, & \text{if}~x>0,\\
            [-2,0],& \text{if}~x=0,\\
            -2,& \text{if}~x<0,
        \end{cases}~~\text{and}~~\partial \delta_{\RR_+}(x) =\begin{cases}
            0, & \text{if}~x>0,\\
            (-\infty,0],& \text{if}~x=0,\\
            \emptyset,& \text{if}~x<0.
        \end{cases} 
    \end{equation*}
    Thus, the preimages can be verified as 
    \begin{equation*}
        (\partial l)^{-1}(-z) = \begin{cases}
            \emptyset, & \text{if}~z>2,\\
            (-\infty,0],& \text{if}~z=2,\\
            0,& \text{if}~0<z<2, \\
            [0,\infty),& \text{if}~z=0, \\
            \emptyset,& \text{if}~z<0,
            \end{cases}
            ~~
            \text{and}
            ~~
            \partial (\delta_{\RR_+})^{-1}(-z) =\begin{cases}
            0, & \text{if}~z>0,\\
            [0,\infty),& \text{if}~z=0,\\
            \emptyset,& \text{if}~z<0.
        \end{cases} 
    \end{equation*}
    This shows that $(\partial l)^{-1}(-z) = (\delta_{\RR_+})^{-1}(-z)$ holds only when $ z<2$.
\end{example}

With the preimage characterization in \cref{prop:preimage-conek}, the following result on the exact penalty functions follows easily by choosing the penalty term $\rho$ correctly.
\begin{proposition}[\text{Exact penalty function}]
\label{prop:exact-penalty-fucntion}
    Assuming strong duality holds for \cref{eq:primal-conic} and \cref{eq:dual-conic}, \cref{eq:conic-r-p} is equivalent to the primal SDP \cref{eq:primal-conic} if the function $g$ in \cref{eq:conic-r-p} is chosen to be the corresponding indicator function, i.e., $g = \delta_{\snp}$, or the exact penalty function 
\begin{equation}
    \label{eq:relaxed-exact-penalty-function}
    \begin{aligned}
        l(X) = \rho \max\{0,\lammax(-X)\}~\text{with}~ \rho >  \Trace(\Zstar),
    \end{aligned}
\end{equation}
where $(\ystar,\Zstar) \in \dsolution$ is an optimal dual solution of \cref{eq:dual-conic}. 
\end{proposition}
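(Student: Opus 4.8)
\textbf{Proof proposal for \Cref{prop:exact-penalty-fucntion}.}

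The plan is to show that \cref{eq:conic-r-p} with the exact penalty choice \cref{eq:relaxed-exact-penalty-function} has exactly the same optimal solution set as the primal SDP \cref{eq:primal-conic}; the indicator-function case is immediate since the feasible sets coincide. First I would observe that $\fp$ in \cref{eq:conic-r-p} is a relaxation of \cref{eq:primal-conic}: any $X$ feasible for \cref{eq:primal-conic} satisfies $\Amap(X)=b$ and $X\in\snp$, hence $g(X)=0$ and $\fp(X)=\innerproduct{C}{X}$, so $\fp^\star \leq \pstar$. Thus it suffices to prove the reverse inequality $\fp^\star \geq \pstar$ together with the claim that every minimizer of \cref{eq:conic-r-p} lies in $\snp$ (and is therefore feasible and optimal for \cref{eq:primal-conic}).

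The key step is a duality/subdifferential argument. Let $(\ystar,\Zstar)\in\dsolution$ be an optimal dual solution and pick $\rho > \Trace(\Zstar)$. By strong duality (\cref{definition:strong-duality}) and complementary slackness, for any $\Xstar\in\psolution$ we have $\Xstar \in (\partial\delta_{\snp})^{-1}(-\Zstar) = \mathcal{N}_{(\snp)^\circ}(-\Zstar)$. Now apply \cref{prop:preimage-conek} with $S=\Zstar$ (legitimate since $\rho>\Trace(\Zstar)$): this gives $(\partial l)^{-1}(-\Zstar) = (\partial\delta_{\snp})^{-1}(-\Zstar) = \mathcal{N}_{(\snp)^\circ}(-\Zstar)$, so $-\Zstar\in\partial l(\Xstar)$. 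Consider the Lagrangian of \cref{eq:conic-r-p} with multiplier $\ystar$ for the affine constraint, namely $X\mapsto \innerproduct{C}{X} + l(X) + \innerproduct{\ystar}{b-\Amap(X)}$. Its subdifferential at $\Xstar$ is $C + \partial l(\Xstar) - \Ajmap(\ystar)$; since $\Ajmap(\ystar)+\Zstar=C$ and $-\Zstar\in\partial l(\Xstar)$, we get $0\in C - \Ajmap(\ystar) + \partial l(\Xstar)$. Hence $\Xstar$ minimizes this Lagrangian, which by weak duality for \cref{eq:conic-r-p} certifies $\fp^\star \geq \innerproduct{C}{\Xstar} + l(\Xstar) + \innerproduct{\ystar}{b-\Amap(\Xstar)} = \innerproduct{C}{\Xstar} = \pstar$. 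Combining with the relaxation inequality, $\fp^\star = \pstar$ and $\Xstar$ is a minimizer of \cref{eq:conic-r-p}.

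It remains to show the converse inclusion: every minimizer $\hat X$ of \cref{eq:conic-r-p} is feasible for \cref{eq:primal-conic}. By construction $\Amap(\hat X)=b$, so the only issue is $\hat X\in\snp$. Suppose for contradiction $\lammax(-\hat X) = t > 0$, i.e., $\hat X$ has a negative eigenvalue $-t$. Let $v$ be a corresponding unit eigenvector and consider the projection-type perturbation, or more simply move along the ray from $\hat X$ toward $\Pi_{\snp}(\hat X)$. Writing $\hat X = \Pi_{\snp}(\hat X) + \Pi_{\snn}(\hat X)$, one shows that $l(\hat X) = \rho\,\|\Pi_{\snn}(\hat X)\|_{\mathrm{op}}$ decreases at rate at least $\rho$ along the direction $-\Pi_{\snn}(\hat X)/\|\Pi_{\snn}(\hat X)\|_{\mathrm{op}}$ toward $\snp$, while the linear term $\innerproduct{C}{X}$ changes at rate $-\innerproduct{C}{\Pi_{\snn}(\hat X)}/\|\Pi_{\snn}(\hat X)\|_{\mathrm{op}} = \innerproduct{\Ajmap(\ystar)+\Zstar}{\text{(unit direction)}}$; using $\innerproduct{\Zstar}{-\Pi_{\snn}(\hat X)} \leq \Trace(\Zstar)\,\lammax(-\hat X)$ and the affine constraint being preserved only approximately, one needs a slightly more careful argument. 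The clean route is: any $X$ with $\Amap(X)=b$ satisfies, by weak duality applied to the \emph{original} pair, $\innerproduct{C}{X} - \pstar = \innerproduct{\Ajmap(\ystar)+\Zstar}{X} - \innerproduct{b}{\ystar} = \innerproduct{\Zstar}{X} \geq -\Trace(\Zstar)\,\lammax(-X) > -\rho\,\lammax(-X) = -l(X)$ whenever $\lammax(-X)>0$, hence $\fp(X) = \innerproduct{C}{X} + l(X) > \pstar = \fp^\star$, contradicting optimality of $\hat X$. This last chain — bounding $\innerproduct{\Zstar}{X}$ from below by $-\Trace(\Zstar)\lammax(-X)$ via the eigenvalue inequality $\innerproduct{\Zstar}{X} \geq \lammin(X)\Trace(\Zstar)$ for $\Zstar\succeq 0$ — is the main technical point, and it is precisely where the \emph{strict} inequality $\rho > \Trace(\Zstar)$ is used. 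I expect the genuinely delicate part to be invoking \cref{prop:preimage-conek} correctly and handling the sign conventions around $(\snp)^\circ = -\snp$ and the normal cone; the feasibility-of-minimizers argument above is then a short self-contained estimate.
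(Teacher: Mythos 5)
Your proposal is correct, but it takes a genuinely different route from the paper's proof. The paper argues through the \emph{dual} of the penalized problem: it writes down the dual of \cref{eq:conic-r-p} (which is \cref{eq:dual-conic} with the extra constraint $\Trace(Z)\leq\rho$), observes that $(\ystar,\Zstar)$ remains optimal there, characterizes the penalized primal solution set via the KKT system as $\mathcal{X}_0\cap(\partial g)^{-1}(-\Zstar)$, and then invokes \cref{prop:preimage-conek} to replace $(\partial g)^{-1}(-\Zstar)$ by $(\partial\delta_{\snp})^{-1}(-\Zstar)$, matching $\psolution$. You instead give a direct primal estimate: for any $X$ with $\Amap(X)=b$, strong duality gives $\innerproduct{C}{X}-\pstar=\innerproduct{\Zstar}{X}\geq\lammin(X)\Trace(\Zstar)$, and when $X\notin\snp$ the strict inequality $\rho>\Trace(\Zstar)$ forces $\fp(X)>\pstar$; this is the classical exact-penalty argument and it is sound (the inequality $\innerproduct{\Zstar}{X}\geq\lammin(X)\Trace(\Zstar)$ for $\Zstar\succeq 0$ is exactly right, including the degenerate case $\Zstar=0$). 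Two remarks. First, your middle step --- the Lagrangian/subdifferential argument invoking \cref{prop:preimage-conek} to certify $\fp^\star\geq\pstar$ --- is actually redundant: your final estimate already shows $\fp(X)\geq\pstar$ for all affine-feasible $X$ (splitting into $X\in\snp$ and $X\notin\snp$), so you could drop the appeal to \cref{prop:preimage-conek} entirely and obtain a fully elementary proof. Second, what the paper's dual route buys, and yours does not, is the explicit identity $\Omega_{\mathrm{P}_2}=\mathcal{X}_0\cap(\partial g)^{-1}(-\Zstar)$, which is reused later (in \cref{remark:exact-penalty} and in the proof of \cref{thm:QG-p}); for the equivalence claim of the proposition itself, however, your argument suffices and is arguably cleaner. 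The abandoned perturbation sketch in your last paragraph should simply be deleted, since the ``clean route'' you settle on is complete on its own.
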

\begin{proof}
    The case for the indicator function $\delta_{\snp}$ is clear. We consider the case for the exact penalty function. Note the dual of \cref{eq:conic-r-p} is
\begin{equation}
    \label{eq:dual-relaxed-2}
    \begin{aligned}
        \max_{y,Z} \quad & \innerproduct{b}{y}\\
        \mathrm{subject~to}\quad& \Ajmap(y) + Z = C, \\
        & \Trace(Z)  \leq \rho,  
        \\
        & Z \in \snp.
    \end{aligned}
\end{equation}
Let $\Omega_{\mathrm{P}_2}$ and $\Omega_{\mathrm{D}_2}$ denote the optimal solution of \cref{eq:conic-r-p} and \cref{eq:dual-relaxed-2} respectively. First, it is clear that $\Omega_{\mathrm{D}_2} \subseteq \dsolution$ as there is an extra constraint in \cref{eq:dual-relaxed-2}, compared with \cref{eq:dual-conic}. Second,
as the problem is convex, a pair of primal and dual solutions $(\hat{X}^\star,\hat{y}^\star,\hat{Z}^\star)$ solves \cref{eq:conic-r-p,eq:dual-relaxed-2} if and only if the following KKT system holds
\begin{equation*}
    \Amap(\hat{X}^\star) = b, \; \Zstar =  C - \Ajmap(\hat{y}^\star), \;  0 \in\hat{Z}^\star+ \partial g (\hat{X}^\star).
\end{equation*}
Then the primal solution set can be written as 
\begin{equation*}
\begin{aligned}
      \Omega_{\mathrm{P}_2} & = \mathcal{X}_0 \cap (\partial g)^{-1}(-\hat{Z}^\star), \quad \forall (\hat{y}^\star,\hat{Z}^\star) \in \Omega_{\mathrm{D}_2}.
\end{aligned}
\end{equation*}
where $\mathcal{X}_0=\{X \in \sn \mid \Amap(X) = b\}$. Note that $(\ystar,\Zstar)$ is an optimal solution to \cref{eq:dual-relaxed-2} as $(\ystar,\Zstar)$ is feasible in \cref{eq:dual-relaxed-2} and $ \innerproduct{b}{\ystar} \! \leq \! \max_{y,Z} \cref{eq:dual-relaxed-2}\! \leq \! \dstar \! = \! \innerproduct{b}{\ystar}$ by construction. We can write $\Omega_{\mathrm{P}_2}$ as 
\begin{equation*}
\begin{aligned}
    \Omega_{\mathrm{P}_2}  = \mathcal{X}_0 \cap (\partial g)^{-1}(-\Zstar) =  \mathcal{X}_0 \cap (\partial \delta_{\snp})^{-1}(-\Zstar),
\end{aligned}
\end{equation*}
where the last equality uses $ (\partial g)^{-1}(-\Zstar) = (\partial \delta_{\snp})^{-1}(-\Zstar) $ in \cref{eq:preimage-normal-coneK} as $\Trace(\Zstar) < \rho$ and $\Zstar \in  \snp$.
On the other hand, we can write $\psolution$ as 
\begin{equation*}
    \psolution = \mathcal{X}_0 \cap (\partial \delta_{\snp})^{-1}(-\Zstar).
\end{equation*}
It is clear that $\Omega_{\mathrm{P}_2} = \psolution$ and the proof is complete.
\end{proof}
\begin{remark}
      [Dual problem]\label{remark:exact-penalty}
    Note that \cref{prop:exact-penalty-fucntion} only guarantees the cost value and solution sets of \cref{eq:primal-conic,eq:conic-r-p} to be the same, while the solution sets of \cref{eq:dual-conic} and the dual of \cref{eq:conic-r-p} may differ, as seen in the proof of \cref{prop:exact-penalty-fucntion}. To fully equate \cref{eq:dual-conic} and the dual of \cref{eq:conic-r-p}, one can choose the penalty coefficient as $\rho > \sup_{(\ystar,\Zstar) \in \dsolution} \Trace( \Zstar )$.
    This choice of $\rho$ can guarantee every optimal solution in \cref{eq:dual-conic} is also an optimal solution in \cref{eq:dual-relaxed-2} as the extra constraint in  \cref{eq:dual-relaxed-2} (compared with  \cref{eq:dual-conic}) does not affect the solution set. Combining this with the fact $\Omega_{\mathrm{D}_2} \subseteq \dsolution$, we conclude that $\dsolution = \Omega_{\mathrm{D}_2}.$
\end{remark}

\subsection{Proof for \cref{prop:preimage-conek}}
\label{subsec:proof-preimage}
The following proposition showcases the subdifferential calculation of the function $l(X) = \rho \max\{ 0, \lambda_{\max}(-X) \}$ with $\rho \geq 0$. It is a standard result of the subdifferential of the maximal eigenvalue of symmetric matrices and standard subdifferential calculus.
\begin{proposition}[{\cite[Theorem 2]{overton1992large} and \cite[Theorem 2.87]{ruszczynski2011nonlinear}}]
    Let $\rho \geq 0$ and $X \in \sn$ with $\lammax(-X)$ having $t$ multiplicity . The subdifferential of $l(X) = \rho \max\{ 0, \lambda_{\max}(-X) \}$ is characterized by
    \begin{equation}
        \label{eq:subdiff-snp}
        \partial l(X) = \begin{cases}
            0&~\text{if}~\lammin(X) > 0, \\
            \{ -\rho P S P^\tr \mid S \in \mathbb{S}^t_+ , \Trace(S) \leq 1 \}
            &~\text{if}~ \lammin(X) = 0, 
            \\
            \{ -\rho P S P^\tr \mid S \in \mathbb{S}^t_+ , \Trace(S) = 1 \} &~\text{if}~ \lammin(X) < 0 ,
        \end{cases}
    \end{equation}
    where columns of $P \in \RR^{n \times t}$ are orthonormal eigenvectors corresponding to $\lambda_{\max}(-X)$.
\end{proposition}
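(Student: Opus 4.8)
The plan is to write $l$ as a composition of an elementary scalar function with the maximum-eigenvalue map, and then combine the two cited results: Overton's subdifferential formula for $\lammax$ \cite[Theorem 2]{overton1992large} and the convex composition chain rule \cite[Theorem 2.87]{ruszczynski2011nonlinear}. Define $\phi:\RR \to \RR$ by $\phi(s) = \rho\max\{0,s\}$ and $\psi:\sn \to \RR$ by $\psi(X) = \lammax(-X)$, so that $l = \phi\circ\psi$. The outer function $\phi$ is convex and nondecreasing (since $\rho \geq 0$), with the elementary subdifferential $\partial\phi(s) = \{0\}$ for $s < 0$, $\partial\phi(0) = [0,\rho]$, and $\partial\phi(s) = \{\rho\}$ for $s > 0$. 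For the inner function, the map $X \mapsto -X$ is linear and self-adjoint, so the affine chain rule gives $\partial\psi(X) = -\partial\lammax(-X)$; invoking \cite[Theorem 2]{overton1992large} then yields $\partial\psi(X) = \{-PSP^\tr \mid S \in \mathbb{S}^t_+,\ \Trace(S) = 1\}$, where the columns of $P$ are orthonormal eigenvectors spanning the eigenspace of $\lammax(-X)$, of multiplicity $t$.

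Because $\phi$ is convex nondecreasing and $\psi$ is convex, the composition chain rule \cite[Theorem 2.87]{ruszczynski2011nonlinear} applies and gives
\[
    \partial l(X) = \{\alpha W \mid \alpha \in \partial\phi(\psi(X)),\ W \in \partial\psi(X)\}.
\]
I would then specialize this to the three regimes distinguished by the sign of $\psi(X) = \lammax(-X) = -\lammin(X)$. When $\lammin(X) > 0$ we have $\psi(X) < 0$, so $\partial\phi(\psi(X)) = \{0\}$ and thus $\partial l(X) = \{0\}$, matching the first branch. When $\lammin(X) < 0$ we have $\psi(X) > 0$, so $\partial\phi(\psi(X)) = \{\rho\}$ and $\partial l(X) = \{-\rho PSP^\tr \mid S \in \mathbb{S}^t_+,\ \Trace(S) = 1\}$, matching the third branch. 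When $\lammin(X) = 0$ we have $\psi(X) = 0$ and $\partial\phi(0) = [0,\rho]$, so $\partial l(X) = \{-\alpha PSP^\tr \mid \alpha \in [0,\rho],\ S \in \mathbb{S}^t_+,\ \Trace(S) = 1\}$.

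The step requiring the most care is the boundary case $\lammin(X) = 0$, where I would reparametrize $\tilde S := (\alpha/\rho)\,S$ to convert the scaled equality constraint into the inequality appearing in \cref{eq:subdiff-snp}: as $\alpha$ ranges over $[0,\rho]$ and $S$ over $\{S \in \mathbb{S}^t_+ \mid \Trace(S) = 1\}$, the product $\tilde S$ ranges exactly over $\{\tilde S \in \mathbb{S}^t_+ \mid \Trace(\tilde S) \leq 1\}$ (take $\tilde S = 0$ from $\alpha = 0$, and for $\tilde S \neq 0$ set $\alpha = \rho\,\Trace(\tilde S)$ and $S = \tilde S/\Trace(\tilde S)$), which reproduces the middle branch $\{-\rho PSP^\tr \mid S \in \mathbb{S}^t_+,\ \Trace(S) \leq 1\}$. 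The only genuine obstacle is confirming that the composition chain rule applies without a constraint-qualification gap, but this is automatic here because $\phi$ is finite-valued and globally nondecreasing, so no regularity beyond convexity of $\psi$ is required; the remainder is the elementary reparametrization above.
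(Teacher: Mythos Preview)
Your proposal is correct and follows exactly the route the paper itself indicates: the paper does not give a proof of this proposition but simply remarks that it ``is a standard result of the subdifferential of the maximal eigenvalue of symmetric matrices and standard subdifferential calculus,'' citing precisely the two references you invoke. Your decomposition $l = \phi\circ\psi$ with $\phi(s)=\rho\max\{0,s\}$ and $\psi(X)=\lammax(-X)$, together with Overton's formula and the monotone composition chain rule, is the intended argument, and your reparametrization in the boundary case $\lammin(X)=0$ is the right way to pass from the scaled trace-equality description to the trace-inequality description in \cref{eq:subdiff-snp} (the edge case $\rho=0$ is trivial since then $l\equiv 0$ and both sides reduce to $\{0\}$).
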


\begin{proposition}
\label{prop:inverse-g}
Let  $Z \in \snp \setminus\{0\}$ and write $Z = \begin{bmatrix}
    P_1 & P_2
\end{bmatrix}\begin{bmatrix}
    \Lambda_{1} & 0 \\
    0   & 0
\end{bmatrix} \begin{bmatrix}
    P_1^\tr \\ P_2^\tr
\end{bmatrix}$ with $\Lambda_1 \in \mathbb{S}_{++}^r$. Let $l(X) = \rho \max\{ 0, \lambda_{\max}(-X) \}$ with $\rho > \Trace(Z)$. We have
    \begin{align*}
        (\partial l)^{-1} (0)& = \snp,\\
        (\partial l)^{-1}(-Z) & = (\mathcal{N}_{\snp})^{-1}(-Z).
    \end{align*}
\end{proposition}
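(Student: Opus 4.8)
\textbf{Proof proposal for \Cref{prop:inverse-g}.} The plan is to compute the two preimages directly from the subdifferential formula \cref{eq:subdiff-snp}, treating the three regimes of $\lammin(X)$ in turn. First I would handle $(\partial l)^{-1}(0)$. Observe that $0 \in \partial l(X)$ precisely when $X$ falls in one of the first two cases of \cref{eq:subdiff-snp}: if $\lammin(X) > 0$ then $\partial l(X) = \{0\}$, and if $\lammin(X) = 0$ then $0 = -\rho P S P^\tr$ with $S = 0 \in \mathbb{S}^t_+$, $\Trace(S) = 0 \le 1$, so $0 \in \partial l(X)$. Conversely, if $\lammin(X) < 0$ then every element of $\partial l(X)$ has the form $-\rho P S P^\tr$ with $\Trace(S) = 1$, hence is nonzero (as $\rho > \Trace(Z) \ge 0$ forces $\rho > 0$ since $Z \ne 0$). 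Therefore $(\partial l)^{-1}(0) = \{X : \lammin(X) \ge 0\} = \snp$. Since $\delta_{\snp}$ has $\mathcal{N}_{\snp}(X) = \{0\}$ exactly on $\mathrm{int}(\snp)$ together with boundary points where the normal cone still may contain $0$—in fact $0 \in \mathcal{N}_{\snp}(X)$ iff $X \in \snp$—this already matches $(\mathcal{N}_{\snp})^{-1}(0) = \snp$, consistent with the claimed identity at $-Z$ when $Z = 0$; but here $Z \ne 0$, so I move on.

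For the second identity, fix $Z \in \snp \setminus \{0\}$ with the given spectral decomposition, $\Lambda_1 \in \mathbb{S}^r_{++}$, $r = \mathrm{rank}(Z) \ge 1$. I claim $-Z \in \partial l(X)$ iff $X$ lies in the face $F := \{X \in \snp : \mathrm{range}(X) \subseteq \mathrm{range}(P_2)\}$, which is exactly $(\mathcal{N}_{\snp})^{-1}(-Z)$ (the set of $X \in \snp$ with $\langle X, Z\rangle = 0$, equivalently $XZ = 0$). The forward direction: if $-Z \in \partial l(X)$, then $Z \ne 0$ rules out the case $\lammin(X) > 0$; in the remaining two cases $Z = \rho P S P^\tr$ for some $S \in \mathbb{S}^t_+$ with $\Trace(S) \le 1$ and $P$ the orthonormal eigenvectors of $-X$ at $\lammax(-X) = -\lammin(X) \ge 0$. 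Taking traces gives $\Trace(Z) = \rho \Trace(S) \le \rho$, but we need more: since $\rho > \Trace(Z)$, we get $\Trace(S) = \Trace(Z)/\rho < 1$, which forces $X$ into the case $\lammin(X) = 0$ (in the case $\lammin(X) < 0$ one would need $\Trace(S) = 1$, contradiction). Hence $\lammin(X) = 0$, the columns of $P$ span the eigenspace of $X$ at eigenvalue $0$, i.e. the null space of $X$, and $Z = \rho P S P^\tr$ has range inside $\mathrm{range}(P) = \ker(X)$; equivalently $XZ = 0$, i.e. $X \in F$. For the reverse direction: if $X \in \snp$ with $XZ = 0$, then $\mathrm{range}(Z) \subseteq \ker(X)$, so $\lammin(X) = 0$ provided $\ker(X) \ne \{0\}$, which holds since $\mathrm{range}(Z) \ne \{0\}$; take $P \in \RR^{n \times t}$ an orthonormal basis of $\ker(X)$ ($t = n - \mathrm{rank}(X) \ge r$), and write $Z = \rho P S P^\tr$ with $S := \tfrac{1}{\rho} P^\tr Z P \in \mathbb{S}^t_+$, $\Trace(S) = \Trace(Z)/\rho < 1$; this exhibits $-Z \in \partial l(X)$ via the $\lammin(X) = 0$ branch of \cref{eq:subdiff-snp}. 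This shows $(\partial l)^{-1}(-Z) = F$, and the standard fact $(\mathcal{N}_{\snp})^{-1}(-Z) = \{X \in \snp : \langle X, Z \rangle = 0\} = F$ finishes the proof.

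The main obstacle I anticipate is the careful bookkeeping around the multiplicity $t$ and the containment $\mathrm{range}(P) = \ker(X)$: in \cref{eq:subdiff-snp} the matrix $P$ is tied to the eigenvectors of $-X$ at its \emph{largest} eigenvalue, which equals $-\lammin(X)$, so identifying "$\lammin(X) = 0$ and $P$ spans the $0$-eigenspace of $X$" with "$\ker(X)$" requires noting that when $\lammin(X) = 0$ the top eigenspace of $-X$ is precisely $\ker X$. The other delicate point is using the \emph{strict} inequality $\rho > \Trace(Z)$ to exclude the $\lammin(X) < 0$ branch (where $\Trace(S)$ is pinned to $1$); this is exactly where the hypothesis on $\rho$ is consumed, and it is the crux of why the penalty preimage collapses to the normal cone. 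Everything else is routine linear algebra on trace and range conditions for positive semidefinite matrices.
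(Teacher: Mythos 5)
Your proof is correct and follows essentially the same route as the paper: both arguments read off the preimage from the eigenvalue-subdifferential formula \cref{eq:subdiff-snp} and use the trace identity $\Trace(Z)=\rho\Trace(S)$ together with $\rho>\Trace(Z)$ to force $\Trace(S)<1$, thereby excluding the $\lammin(X)<0$ branch. The only (cosmetic) difference is that you describe the resulting set as $\{X\in\snp : XZ=0\}$ and invoke the standard form of $\mathcal{N}_{\snp}$, whereas the paper writes it as the explicit block parametrization $\{[P_1\;P_2]\,\mathrm{blkdiag}(0,B)\,[P_1\;P_2]^\tr : B\in\mathbb{S}^{n-r}_+\}$ and cites the complementary-face description of $\mathcal{N}_{\snn}(-Z)$; these are the same set.
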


\begin{proof}
    The case $(\partial l)^{-1}(0) =\snp$ can be easily observed from the first two cases in \cref{eq:subdiff-snp}. We then consider the case $Z \in \snp \setminus \{0\}$. Given a $X \in \sn$, we use the matrix $P_{X} \in \RR^{n \times t}$ to denote the orthonormal eigenvectors corresponding to $\lammax(-X)$ where $t$ is the multiplicity of $\lammax(-X)$.  It follows that
    \begin{equation*}
        \begin{aligned}
                    (\partial l)^{-1} (-Z) & =\{X \in \sn \mid -Z \in \partial l(X)\} \\ &\overset {(a)}{=} \{X \in \snp \setminus \sn_{++} \mid  Z \in \rho P_X SP_X^\tr, S \in \mathbb{S}^t_+, \Trace(S) < 1\}    \\
        & \overset {(b)}{=} 
        \left \{ \begin{bmatrix}
            P_1 & P_2 
        \end{bmatrix} \begin{bmatrix}
             \mathsf{0} & 0 \\
            0 & B 
        \end{bmatrix} \begin{bmatrix}
            P_1^\tr \\ P_2^\tr 
        \end{bmatrix}
        \mid  \mathsf{0} \in \RR^{r\times r}, B \in \mathbb{S}^{n-r}_+\right\} \\
        & \overset{(c)}{=}\mathcal{N}_{\snn}(-Z) \\
        & = (\mathcal{N}_{\snp})^{-1}(-Z),
        \end{aligned}
    \end{equation*}
    where the domain $\snp \setminus \sn_{++}$ and the strict inequlaity $\Trace(S) < 1$ in $(a)$ is due to the choice $\rho > \Trace(Z)$, and $(c)$ is a common form of $\mathcal{N}_{\snn}(-Z)$ \cite[Complementary face in Section 3.3]{ding2023strict}. We argue $(b)$ is correct by verifying the following two sets are equivalent. 
    \begin{equation*}
        \begin{aligned}
            &D_1 = \{X \in \snp \setminus \sn_{++} \mid  Z \in \rho P_X SP_X^\tr, S \in \mathbb{S}^t_+, \Trace(S) < 1\},   \\
            & D_2 = \left \{ \begin{bmatrix}
            P_1 & P_2 
        \end{bmatrix} \begin{bmatrix}
             \mathsf{0} & 0 \\
            0 & B 
        \end{bmatrix} \begin{bmatrix}
            P_1^\tr \\ P_2^\tr 
        \end{bmatrix}
        \mid  \mathsf{0} \in \RR^{r\times r}, B \in \mathbb{S}^{n-r}_+\right\}.
        \end{aligned}
    \end{equation*}

    \begin{itemize}[leftmargin=*,align=left,noitemsep]
        \item $(D_1) \Rightarrow (D_2):$ Let $X \in D_1$. Write $X = P_{\alpha} 0 P_{\alpha}^\tr + P_{\beta} \Lambda_\beta P_{\beta}^\tr $ with $P_{\alpha} \in \RR^{n \times t}$ and $P_{\beta} \in \RR^{n \times (n-t)}$. As $\mathrm{span}(P_1)\subseteq \mathrm{span}(P_\alpha)$, we have $\mathrm{span}(P_\beta)\subseteq \mathrm{span}(P_2)$.~Thus, there exists $U \in \RR^{n \times (n-r)}$ such that $ U^\tr U = I_{n-r}$ and $ P_\beta = P_2 U$. We then have
        \begin{equation*}
            P_\beta \Lambda_{\beta} P_\beta^\tr =  P_2 ( U \Lambda_{\beta} U^\tr) P_2^\tr.
        \end{equation*}
        This means $X \in D_2.$
        \item $(D_2)\Rightarrow (D_1):$ Let $X \in D_2$. It is clear that $\mathrm{span}(P_1)$ is in the eigenspace of $0$ of the matrix $X$. Choosing $S = \frac{\Lambda_1}{\rho} \in \mathbb{S}_{++}^{r}$, we can recover $Z$ as
        \begin{equation*}
            \rho P_1 \frac{\Lambda_1}{\rho} P_1^\tr = Z. 
        \end{equation*}
        Also, $\Trace(S)= \Trace(\Lambda_1)/ \rho <1$, so we conclude $X \in D_1.$
    \end{itemize}
    \vspace{-8 mm}
\end{proof}

\section{Growth properties of indicator and exact penalty functions in \cref{lemma:general-growth-g(x)}} \label{appendix-sec:detail-general-growth}

One key step in the proof of \Cref{theorem:growth-error-bound,theorem:growth-error-bound-dual} is the growth properties of the indicator function or the penalty function \cref{eq:exact-penalty-function-cone}, summarized in \cref{eq:penalty-GQG-primal-new} in \cref{lemma:general-growth-g(x)}. In this section, we complete its proof details. We first establish a lower bound of $\innerproduct{\barZ}{X}$ in terms of the distance $\Dist(X,\mathcal{N}_{(\snp)^\circ}(-\barZ))$. Our proof uses some techniques in \cite{cui2016asymptotic}.
\begin{lemma}
    \label{lemma:GQ-indicator}
Let $\barX, \barZ \in \snp$ satisfying $\innerproduct{\barX}{\barZ} = 0$ (i.e., both $\barX$ and $\barZ$ are on the boundary unless one of them is zero). For any positive $ \mu \in (0,\infty)$, 
    there exists a constant $\kappa > 0$ such that
    \begin{equation}  \label{eq:GQ-indicator}
        \innerproduct{\barZ}{X} \geq \kappa \cdot \Dist^2\left(X,\mathcal{N}_{(\snp)^\circ}(-\barZ)\right),
        \quad \forall X \in \snp \cap \mathbb{B}(\barX,\mu),
    \end{equation}
    where $\mathbb{B}(\barX,\mu) = \{X \in \sn \mid \|X- \barX\| \leq \mu \}$ denotes a closed ball with radius $\mu$ and center $\barX$.
\end{lemma}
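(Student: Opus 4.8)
The plan is to reduce the matrix inequality \cref{eq:GQ-indicator} to a statement about eigenvalues by diagonalizing $\barZ$, and then to control the distance to the normal cone $\mathcal{N}_{(\snp)^\circ}(-\barZ)$ using the block structure coming from that diagonalization. Write $\barZ = \sum_{i=1}^r \lambda_i p_i p_i^\tr$ with $\lambda_1 \geq \cdots \geq \lambda_r > 0$ and orthonormal $p_i$; let $P_1 = [p_1,\ldots,p_r]$ span the range of $\barZ$ and $P_2$ span its kernel, so $Q = [P_1 \; P_2]$ is orthogonal. Since $\innerproduct{\barX}{\barZ}=0$ and both are PSD, $\barX$ lives in the block $P_2 (\cdot) P_2^\tr$. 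As recalled in the excerpt (the "complementary face" form used in \Cref{prop:inverse-g}), the normal cone $\mathcal{N}_{(\snp)^\circ}(-\barZ) = (\mathcal{N}_{\snp})^{-1}(-\barZ)$ consists exactly of matrices of the form $P_2 B P_2^\tr$ with $B \in \mathbb{S}^{n-r}_+$. The first step is therefore to express, for $X \in \snp$, the distance $\Dist(X,\mathcal{N}_{(\snp)^\circ}(-\barZ))$ in the $Q$-coordinates: writing $Q^\tr X Q = \begin{bmatrix} X_{11} & X_{12} \\ X_{12}^\tr & X_{22}\end{bmatrix}$, the projection onto $\{P_2 B P_2^\tr : B \succeq 0\}$ zeros out $X_{11}$ and $X_{12}$ and replaces $X_{22}$ by $\Pi_{\snp}(X_{22})$, so that
\[
\Dist^2(X,\mathcal{N}_{(\snp)^\circ}(-\barZ)) = \|X_{11}\|^2 + 2\|X_{12}\|^2 + \Dist^2(X_{22},\snp).
\]
Meanwhile $\innerproduct{\barZ}{X} = \innerproduct{\Lambda_1}{X_{11}}$ where $\Lambda_1 = \mathrm{diag}(\lambda_1,\ldots,\lambda_r) \succ 0$, so $\innerproduct{\barZ}{X} \geq \lambda_r \Trace(X_{11})$.

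The second step is to bound each of the three terms on the right of the displayed identity by $\innerproduct{\barZ}{X}$, up to a constant depending on $\mu$, $\lambda_r$ and $\|\barX\|$. Because $X \succeq 0$, the diagonal block $X_{11}$ is PSD, hence $\|X_{11}\| \leq \Trace(X_{11}) \leq (1/\lambda_r)\innerproduct{\barZ}{X}$, and since $X$ is bounded ($\|X\| \leq \|\barX\|+\mu =: R$) we get $\|X_{11}\|^2 \leq R\,\|X_{11}\| \leq (R/\lambda_r)\innerproduct{\barZ}{X}$. For the coupling block, positive semidefiniteness of the $2\times 2$ block matrix forces $\|X_{12}\|^2 \leq \|X_{11}\|_{\mathrm{op}}\cdot\|X_{22}\|_{\mathrm{op}} \leq \Trace(X_{11})\cdot R$, so again $\|X_{12}\|^2 \leq (R/\lambda_r)\innerproduct{\barZ}{X}$. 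The third term is the delicate one: $X \succeq 0$ does not make $X_{22}$ PSD on its own, but it does bound how negative $X_{22}$ can be in terms of $X_{11}$ and $X_{12}$. Concretely, using the Schur-complement-type inequality $X_{22} \succeq X_{12}^\tr X_{11}^{\dagger} X_{12}$ on the range of $X_{11}$ (and that the kernel-of-$X_{11}$ part of $X_{22}$ is automatically PSD by taking principal submatrices), one shows $\Dist(X_{22},\snp) = \|\Pi_{\snn}(X_{22})\| \leq \|X_{12}^\tr X_{11}^\dagger X_{12}\| \lesssim \|X_{12}\|^2 / \|X_{11}\|$ — and more carefully $\Dist^2(X_{22},\snp) \leq C(R)\,(\|X_{11}\| + \|X_{12}\|^2/\text{(something)})$, which after the previous two bounds is again $O(\innerproduct{\barZ}{X})$. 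Summing the three estimates gives $\Dist^2(X,\mathcal{N}_{(\snp)^\circ}(-\barZ)) \leq \kappa^{-1}\innerproduct{\barZ}{X}$ for an explicit $\kappa>0$, which is \cref{eq:GQ-indicator}.

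The main obstacle is the third term, $\Dist(X_{22},\snp)$: one must argue that the negative part of the kernel block of $X$ is quantitatively controlled by the off-diagonal and range blocks, which is exactly where positive semidefiniteness of the whole matrix $X$ (rather than of the pieces) must be used, via a Schur-complement argument — and one has to be careful that $X_{11}$ may be singular, so a pseudo-inverse / limiting argument or a direct spectral estimate is needed. I expect this is why the authors say the proof "uses some techniques in \cite{cui2016asymptotic}"; the remaining pieces (the coordinate change, the projection formula onto the normal cone, and the two easy PSD bounds) are routine. One should also double-check that the constant can be taken uniform over the ball $\mathbb{B}(\barX,\mu)$ — this is automatic here since all the estimates depend on $X$ only through the uniform bound $R = \|\barX\|+\mu$, which is the whole point of stating the lemma for arbitrary finite $\mu$ rather than a small neighborhood.
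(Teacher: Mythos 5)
Your overall strategy is the same as the paper's: diagonalize $\barZ$ as $P\,\mathrm{diag}(\Lambda_1,0)\,P^\tr$, express the distance to $\mathcal{N}_{(\snp)^\circ}(-\barZ)$ in the block coordinates $P^\tr X P$, and control each block via $\Trace(P_1^\tr X P_1)\le \lammin(\Lambda_1)^{-1}\innerproduct{\barZ}{X}$. However, two of your steps are wrong as written. First, the term you single out as ``the delicate one'' is a phantom: for $X\in\snp$ the compression $X_{22}=P_2^\tr X P_2$ satisfies $v^\tr X_{22}v=(P_2v)^\tr X (P_2v)\ge 0$ for every $v$, so $X_{22}\succeq 0$ automatically and $\Dist(X_{22},\snp)=0$. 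Your assertion that ``$X\succeq 0$ does not make $X_{22}$ PSD on its own'' is simply false (indeed your own Schur-complement inequality $X_{22}\succeq X_{12}^\tr X_{11}^{\dagger}X_{12}\succeq 0$ already contradicts it), and the ensuing chain $\Dist(X_{22},\snp)\le\|X_{12}^\tr X_{11}^\dagger X_{12}\|\lesssim \|X_{12}\|^2/\|X_{11}\|$ is not a valid deduction. This is precisely why the paper's computation of the squared distance contains only the two terms $\|P_1^\tr XP_1\|^2+2\|P_1^\tr XP_2\|^2$. Second, your bound $\|X_{12}\|^2\le\|X_{11}\|_{\mathrm{op}}\|X_{22}\|_{\mathrm{op}}$ for the \emph{Frobenius} norm of the off-diagonal block of a PSD matrix is false: taking all three blocks equal to $I_k$ gives a PSD block matrix with left-hand side $k$ and right-hand side $1$. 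That inequality holds only for the operator norm. The correct statement, which the paper isolates as \cref{eq:trace-bound}, is $\|X_{12}\|^2\le \Trace(X_{11})\,\|X_{22}\|_{\mathrm{op}}$ (write $X_{12}=X_{11}^{1/2}KX_{22}^{1/2}$ with $\|K\|_{\mathrm{op}}\le 1$ and take traces), and it delivers exactly the endpoint $\Trace(X_{11})\cdot R$ you wanted.

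With those two repairs --- drop the third term and replace the off-diagonal estimate --- your argument coincides with the paper's proof; your bounds on $\|X_{11}\|^2$ and on $\innerproduct{\barZ}{X}$ from below are correct, and the uniformity of the constant over $\mathbb{B}(\barX,\mu)$ is handled the same way. You should also treat $\barZ=0$ separately, since then $\Lambda_1$ is empty and $\lambda_r$ is undefined; in that case $\mathcal{N}_{(\snp)^\circ}(0)=\snp$ and both sides of \cref{eq:GQ-indicator} vanish for $X\in\snp$, as the paper notes at the outset.
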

Note that \cref{eq:GQ-indicator} is the reformulation of 
\begin{equation} \label{eq:appendix-p-growth}
    0 \geq 0 -\innerproduct{\barZ}{X}+\kappa \cdot \Dist^2\left(X,\mathcal{N}_{(\snp)^\circ}(-\barZ)\right),\quad \forall X \in \snp \cap \mathbb{B}(\barX,\mu).
\end{equation}
Indeed, \cref{eq:appendix-p-growth} is the same as \cref{eq:penalty-GQG-primal-new} in \cref{lemma:general-growth-g(x)} when choosing $l = \delta_{\snp}$ (recall that we have $\delta_{\snp}(\bar{X}) = 0$, and the relationship in \cref{eq:pre-image-exact-penalty}). Therefore, \Cref{lemma:general-growth-g(x)} for the indicator function $l = \delta_{\snp}$ is a direct consequence of \cref{lemma:GQ-indicator}. The proof of \cref{lemma:GQ-indicator} is given in \cref{subsection:proof-QG-indicator}. 
In \cref{appendix:exact-penalty-function-growth}, we will use \cref{lemma:GQ-indicator} to prove the growth property for the exact penalty function $l$ in \cref{eq:exact-penalty-function-cone}.   

\subsection{Proof of \cref{lemma:GQ-indicator}}
\label{subsection:proof-QG-indicator}
We only need to consider the case where $\barZ \neq 0$, since the case $\barZ = 0$ is true by observing that 
$$
\begin{aligned}
\innerproduct{X}{\barZ} = 0,\;  \Dist(X,\mathcal{N}_{(\snp)^\circ}(0)) = \Dist(X,\snp) = 0, \quad \forall X \in \snp \cap \mathbb{B}(\barX,\mu).
\end{aligned}
$$
We first introduce the following useful inequality for positive semidefinite matrices.
\begin{lemma}
\label{eq:trace-bound}
    Suppose $ \begin{bmatrix}
        A & B \\
        B^\tr & D \\ 
    \end{bmatrix} \in \snp$. Then $\|D\|_{\mathrm{op}}\Trace(A)\geq \|B\|^2.$
\end{lemma}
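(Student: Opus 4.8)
The goal is to prove Lemma~\ref{eq:trace-bound}: if $M = \begin{bmatrix} A & B \\ B^\tr & D \end{bmatrix} \in \snp$, then $\|D\|_{\mathrm{op}} \Trace(A) \geq \|B\|^2$. The plan is to exploit positive semidefiniteness of $M$ by probing it with carefully chosen rank-one test directions and then summing.

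\textbf{Approach.} For positive semidefiniteness, $\begin{bmatrix} u \\ v \end{bmatrix}^\tr M \begin{bmatrix} u \\ v \end{bmatrix} = u^\tr A u + 2 u^\tr B v + v^\tr D v \geq 0$ for all $u, v$. Replacing $v$ by $-v$ and combining gives $u^\tr A u + v^\tr D v \geq 2 |u^\tr B v|$, hence $|u^\tr B v| \leq \tfrac12(u^\tr A u + v^\tr D v)$. First I would rescale: for any $t>0$, applying this to $(tu, v/t)$ yields $|u^\tr B v| \leq \tfrac12(t^2 u^\tr A u + t^{-2} v^\tr D v)$, and optimizing over $t$ gives $|u^\tr B v| \leq \sqrt{(u^\tr A u)(v^\tr D v)} \leq \sqrt{(u^\tr A u)\,\|D\|_{\mathrm{op}}\,\|v\|^2}$. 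So $|u^\tr B v| \leq \|v\| \sqrt{\|D\|_{\mathrm{op}}}\,\sqrt{u^\tr A u}$ for all $u,v$.

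\textbf{Key steps.} Let $\{e_i\}$ be the standard orthonormal basis, so $\|B\|^2 = \sum_{i,j} (e_i^\tr B e_j)^2$. Applying the bound above with $u = e_i$, $v = e_j$ gives $(e_i^\tr B e_j)^2 \leq \|D\|_{\mathrm{op}} (e_i^\tr A e_i)$. Summing over $i$ and $j$: $\|B\|^2 = \sum_{i,j}(e_i^\tr B e_j)^2 \leq \sum_j \|D\|_{\mathrm{op}} \sum_i (e_i^\tr A e_i) = (\dim D)\,\|D\|_{\mathrm{op}}\,\Trace(A)$, which has a spurious dimension factor. To remove it, I would instead fix $j$ and bound the whole column at once: $\sum_i (e_i^\tr B e_j)^2 = \|B e_j\|^2$, and use the test vector $u = B e_j$ (unnormalized), giving $\|B e_j\|^4 = (u^\tr B e_j)^2 \leq \|D\|_{\mathrm{op}}\,\|e_j\|^2\,(u^\tr A u) = \|D\|_{\mathrm{op}}\, u^\tr A u$. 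Since $u^\tr A u \le \|u\|^2 \lambda_{\max}(A) \le \|u\|^2 \Trace(A)$ — hmm, this still loses. The clean route: sum the column bound $(e_i^\tr B e_j)^2 \le \|D\|_{\mathrm{op}} (e_i^\tr A e_i)$ over $i$ only for each fixed $j$, getting $\|Be_j\|^2 \le \|D\|_{\mathrm{op}} \Trace(A)$, then note $\|B\|^2 = \sum_j \|Be_j\|^2$ reintroduces the factor $\dim D$. The genuinely sharp argument avoids coordinates: use $|u^\tr B v| \le \sqrt{\|D\|_{\mathrm{op}}\,\|v\|^2\,(u^\tr A u)}$ with $v = B^\tr u / \|B^\tr u\|$ to get $\|B^\tr u\| \le \sqrt{\|D\|_{\mathrm{op}}\,(u^\tr A u)}$, i.e. $\|B^\tr u\|^2 \le \|D\|_{\mathrm{op}}\,(u^\tr A u)$ for all $u$; then taking $u = e_i$ and summing over $i$ gives $\sum_i \|B^\tr e_i\|^2 \le \|D\|_{\mathrm{op}} \sum_i e_i^\tr A e_i$, and the left side is exactly $\|B\|^2$ (Frobenius) since $\sum_i \|B^\tr e_i\|^2 = \sum_i \sum_j (e_j^\tr B^\tr e_i)^2 = \|B\|^2$, while the right side is $\|D\|_{\mathrm{op}} \Trace(A)$. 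Done.

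\textbf{Main obstacle.} The only subtlety, as the scratch work above shows, is bookkeeping the norms so that no extraneous dimension factor appears: the trick is to derive the \emph{operator-level} inequality $\|B^\tr u\|^2 \le \|D\|_{\mathrm{op}}\,(u^\tr A u)$ first (which packages an entire row of $B$), and only afterwards sum over an orthonormal basis, so that $\sum_i (u_i^\tr A u_i) = \Trace(A)$ assembles cleanly on the right while $\sum_i \|B^\tr u_i\|^2 = \|B\|^2$ assembles on the left. Everything else is the standard $2\times 2$ block PSD manipulation plus the inequality $v^\tr D v \le \|D\|_{\mathrm{op}}\|v\|^2$.
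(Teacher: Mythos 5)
Your proof is correct. The paper in fact states this lemma without giving any proof (it is used as a standalone auxiliary fact in the proof of the subsequent distance bound), so there is no argument of the authors' to compare against; your derivation supplies the missing justification. The chain you end with is sound: PSD-ness gives $|u^\tr B v|\le \sqrt{(u^\tr A u)(v^\tr D v)}\le \sqrt{\|D\|_{\mathrm{op}}}\,\|v\|\sqrt{u^\tr A u}$, taking $v=B^\tr u/\|B^\tr u\|$ packages this into the operator-level inequality $\|B^\tr u\|^2\le \|D\|_{\mathrm{op}}\,(u^\tr A u)$, and summing over an orthonormal basis yields $\|B\|^2=\Trace(BB^\tr)\le \|D\|_{\mathrm{op}}\Trace(A)$ with no spurious dimension factor. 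The only edge case worth a clause is $B^\tr u=0$, where the inequality is trivial (and your limiting argument already handles $u^\tr Au=0$). I would cut the exploratory dead ends from the write-up and present only the final argument; alternatively, the same bound follows in one line from the standard factorization $B=A^{1/2}KD^{1/2}$ with $\|K\|_{\mathrm{op}}\le 1$, via $\|B\|^2\le \|A^{1/2}\|^2\,\|D^{1/2}\|_{\mathrm{op}}^2$.
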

We now proceed with the proof. Fix a $\mu > 0$. Consider $\barX,\barZ \in \snp$ and $\innerproduct{\barX}{\barZ} = 0 $. Let $X \in \mathbb{B}(\barX,\mu) \cap \snp$ and write 
$$
\barZ = \begin{bmatrix}
    P_1 & P_2
\end{bmatrix}\begin{bmatrix}
    \Lambda_{1} & 0 \\
    0   & 0
\end{bmatrix} \begin{bmatrix}
    P_1^\tr \\ P_2^\tr
\end{bmatrix},
$$ 
where $\Lambda_{1} \in \mathbb{S}^r_+ $ is a diagonal matrix containing the positive eigenvalues of $\barZ$. Let $P = [P_1 \; P_2]$ be the orthonormal eigenvectors matrix. The projection of $X$ onto $\mathcal{N}_{(\snp)^\circ}(-\barZ)$ can be verified as
 \begin{equation*}
      P \begin{bmatrix}
        0  & 0 \\
        0 & P_2^\tr X P_2
    \end{bmatrix} P^\tr =  \argmin_{Y \in \mathcal{N}_{(\snp)^\circ}(-\barZ)} \|X - Y\|^2.
 \end{equation*}
Hence, the distance $\Dist^2(X,\mathcal{N}_{(\snp)^\circ} (-\barZ))$ can be computed as 
\begin{equation}
    \label{eq:sdp-distance}
    \begin{aligned} 
       \Dist^2(X,\mathcal{N}_{(\snp)^\circ} (-\barZ)) & =\left \|X - P \begin{bmatrix}
        0  & 0 \\
        0 & P_2^\tr X P_2
    \end{bmatrix} P^\tr \right \|^2 \\
    & =  \left \|\begin{bmatrix}
          P_1^\tr X P_1 & P_1^\tr X P_2 \\
          P_2^\tr X P_1 & 0
        \end{bmatrix}  \right \|^2 
         = \|P_1^\tr X P_1\|^2 + 2\|P_1^\tr X P_2\|^2 ,
    \end{aligned}
\end{equation}
    where the second equality uses the unitary invariance.
    It remains to estimate the terms $\|P_1^\tr X P_1\|^2 $
    and $\|P_1^\tr X P_2\|^2$. 
    
    By the assumption $X \in \mathbb{B}(\barX,\mu) \cap \snp$ and unitary invariance, we have
    \begin{equation}
    \label{eq:distance-norm}
    \begin{aligned}
        \mu \geq \|X - \barX\| = \|P^\tr  (X-\barX) P\| & \geq \max\{ \|P_1^\tr(X - \barX)P_1\|,\|P_2^\tr(X - \barX)P_2\| \} \\
        & \geq \max\{ \|P_1^\tr X P_1\|_{\mathrm{op}},\|P_2^\tr(X - \barX)P_2\|_{\mathrm{op}} \},
    \end{aligned}
    \end{equation}
    where the last inequality uses the fact that $P_1^\tr \barX P_1 = 0$ as $\innerproduct{\barX}{\barZ} = 0$ and $\barX,\barZ \in \snp$, and the relationship $ \|\cdot\| \geq \|\cdot\|_{\mathrm{op}}$. 
    It follows that
    \begin{equation}
    \begin{aligned}
        \| P_1^\tr X P_1\|^2&  = \innerproduct{P_1^\tr X P_1}{P_1^\tr X P_1} \leq \|P_1^\tr X P_1\|_{\mathrm{op}} \|P_1^\tr X P_1\|_{*} \leq  \mu \cdot\Trace(P_1^\tr X P_1),
        \label{eq:p1-p1}
    \end{aligned}
    \end{equation}
    where the second inequality uses the generalized Cauchy-Schwarz inequality ($\|\cdot\|_*$ and $\|\cdot\|_{\mathrm{op}}$ norms are dual to each other; we have $|\innerproduct{X}{Z}| \leq \|X\|_* \|Z\|_{\mathrm{op}}, \forall X, Z \in \sn$), and the last inequality uses \cref{eq:distance-norm} and $\|P_1^\tr X P_1\|_{*} = \Trace(P_1^\tr X P_1)$ as $X \in \snp$.

     On the other hand, applying \cref{eq:trace-bound} on $P^\tr X P$ yields
    \begin{equation}
    \begin{aligned}
        \label{eq:p1-p2}
        \|P_1^\tr X P_2\|^2 & \leq \|P_2^\tr X P_2\|_{\mathrm{op}} \Trace(P_1^\tr X P_1 ) \leq (\mu + \|\barX\|)\cdot \Trace(P_1^\tr X P_1),
    \end{aligned}       
    \end{equation}
    where the last inequality is due to \cref{eq:distance-norm}. Moreover, we have 
\begin{equation}
    \label{eq:lb-ip-sn}
    \innerproduct{\barZ}{X} = \innerproduct{P_1 \Lambda_1 P_1^\tr}{X} = \innerproduct{\Lambda_1}{P_1^\tr X P_1} \geq \lammin(\Lambda_1)\Trace(P_1^\tr X P_1),
\end{equation}
where $\lammin(\Lambda_1) > 0$. Putting \cref{eq:sdp-distance,eq:p1-p2,eq:p1-p1} together and choosing $\kappa = \frac{\lammin(\Lambda_1)}{3\mu +2\| 
\barX \|}$ gives us
\begin{equation*}
    \kappa \cdot \Dist^2(X,\mathcal{N}_{(\snp)^\circ} (-\barZ)) \leq \kappa (3\mu + 2\|\barX\|) \Trace(P_1^\tr X P_1) \leq \innerproduct{\barZ}{X},
\end{equation*}
where the last inequality is due to \cref{eq:lb-ip-sn}.
This completes the proof.

\subsection{Growth property of the exact penalty function $l$ in \cref{eq:exact-penalty-function-cone}} \label{appendix:exact-penalty-function-growth}
Throughout this section, we fix a pair of matrices $(\barX, \barZ) \in \snp \times \snp$  satisfying $\innerproduct{\barX}{\barZ} = 0$. We need to prove for any positive value $\mu  \in (0,\infty)$,      
    there exists a positive constant $\kappa > 0$ such that
    \begin{equation}
        \label{eq:penalty-GQG-primal-new-appendix}
        l(X) \geq l(\barX) + \innerproduct{-\barZ}{X -\barX} + \kappa \cdot \Dist^2\left(X,(\partial l)^{-1}(-\barZ)\right),\quad \forall X \in \mathbb{B}(\barX,\mu).
    \end{equation}
    
Note that it is equivalent to showing that there exists a $\kappa > 0$ such that
      \begin{equation} \label{eq:penalty-GQG-primal-appendix}
           l(X) \geq \underbrace{\innerproduct{-\barZ}{X}}_{R_1} + \kappa \cdot \underbrace{\Dist^p\left(X,\mathcal{N}_{(\snp)^\circ}(-\barZ)\right)}_{R_2},\quad \forall X \in \mathbb{B}(\barX,\mu)
      \end{equation}
as $l(\barX) = 0$ and $\innerproduct{\barZ}{\barX} = 0$ by assumption. This result \cref{eq:penalty-GQG-primal-appendix} has already applied a non-trivial fact 
$ 
(\partial l)^{-1}(-\barZ) = (\mathcal{N}_{\snp})^{-1}(-\barZ)= \mathcal{N}_{(\snp)^\circ}(-\barZ)
$ for the exact penalty function \cref{eq:exact-penalty-function-cone}, which has been established in \Cref{prop:preimage-conek}.

In the following, we will bound the terms $R_1$ and $R_2$ in \cref{eq:penalty-GQG-primal-appendix}. The term $R_2$ will rely on \cref{lemma:GQ-indicator}. One key difference between \cref{eq:penalty-GQG-primal-appendix} and \cref{eq:appendix-p-growth} is that the penalty function  $l(X) < \infty$ when $X \notin \snp$, while $\delta_{\snp}(X) = \infty, \forall X \notin \snp$. 

Let us fix a $\mu > 0$, consider $X \in \mathbb{B}(\barX,\mu)$, and write $\rho = \delta + \Trace(\barZ)$.
We first consider the case when $\barZ = 0$. Note that $\innerproduct{\barZ}{X} = 0$ and $\mathcal{N}_{(\snp)^\circ}(0) = \snp$. The distance $\Dist(X,\mathcal{N}_{(\snp)^\circ}(0))$ can be upper bounded as 
\begin{equation*}
\Dist^2(X,\mathcal{N}_{(\snp)^\circ}(0)) = \|X - \Pi_{\snp}(X)\|^2 \leq n \max\{0, \lammax(-X)\}^2 \leq   n \mu \max\{0, \lammax(-X)\}.
\end{equation*}
Choosing $\kappa = \frac{\rho}{n \mu}$ yields
\begin{equation*}
    \innerproduct{-\Zstar}{X} + \kappa \cdot \Dist^2(X,\mathcal{N}_{(\snp)^\circ}(0)) \leq \rho \max \{0,\lammax(-X)\} = l(X), \quad \forall X \in \mathbb{B}(\barX,\mu).
\end{equation*}

We then consider the case $\barZ \neq 0$.
\begin{itemize}
[leftmargin=*,align=left]
\item 
 We first bound $\text{R}_1 = \innerproduct{-\barZ}{X}$:
        \begin{align}
         \innerproduct{-\barZ}{X} & 
         = \innerproduct{\barZ}{   \Pi_{\snp}(X)   - X } -  \innerproduct{\barZ}{ \Pi_{\snp}(X) }\nonumber \\
        & \leq \|\barZ\|_{*}\|\Pi_{\snp}(X)  - X \|_{\mathrm{op}} -\innerproduct{\barZ}{ \Pi_{\snp}(X)  } \nonumber \\
        & =  \Trace(\barZ) \max \{0, \lammax(-X)\} -\innerproduct{\barZ}{ \Pi_{\snp}(X)  }, \label{eq:bound-r1-sdp}
     \end{align}
     
   where the inequality applies the general Cauchy-Schwarz inequality ($\|\cdot\|_*$ and $\|\cdot\|_{\mathrm{op}}$ norms are dual to each other; we have $| \innerproduct{X}{Z}| \leq \|X\|_* \|Z\|_{\mathrm{op}}, \forall X, Z \in \sn$).   
\item   We then bound $\text{R}_2 = \Dist^2(X, \mathcal{N}_{(\snp)^\circ}(-\barZ))$: 
 Let $Y$ be the projection of $\Pi_{\snp}(X)$ on to $\mathcal{N}_{(\snp)^\circ}(-\barZ) $, i.e., $Y = \argmin_{V \in \mathcal{N}_{(\snp)^\circ}(-\barZ)}\|\Pi_{\snp}(X) - V\|.$ 
    We then have
    \begin{equation}
    \begin{aligned}
    \label{eq:bound-r2-sdp}
        \Dist^2\left(X, \mathcal{N}_{(\snp)^\circ}(-\barZ)\right)
        \leq  \|X -Y \|^2  
         \leq   2 \|X - \Pi_{\snp}(X)\|^2 + 2 \|\Pi_{\snp}(X)-Y\|^2.
    \end{aligned}
    \end{equation}
    The first term $\|X - \Pi_{\snp}(X) \|^2$ can be bounded by  
    \begin{align*}
        \|X - \Pi_{\snp}(X) \|^2 \leq n \max\{0, \lammax(-X)\}^2 \leq n \mu\max\{0, \lammax(-X)\},
    \end{align*} 
    where the last inequality comes from $|\lammin(X)-\lammin(\barX)|= |\lammax(-X)| \leq \mu$ as $\lammin(\barX) = 0$ and $\|X-\barX\| \leq \mu.$

     The second term $\|\Pi_{\snp}(X)-Y\|^2$ can be bounded by \cref{lemma:GQ-indicator}, i.e., there is a $\kappa^\prime > 0$ such that 
    \begin{equation*}
        \|\Pi_{\snp}(X)-Y\|^2 \leq \frac{1}{\kappa^\prime}\innerproduct{\Pi_{\snp}(X)}{\barZ}.
    \end{equation*}
    
    Putting everything together and choosing $\kappa = \min\{\frac{\delta}{2 n \mu},\frac{\kappa^{\prime}}{2}\}
    $ yields
    \begin{equation*}
        \begin{aligned}
            &\;\innerproduct{-\barZ}{X} + \kappa\cdot\Dist^2\left(X,\mathcal{N}_{(\snp)^\circ}(-\barZ)\right) \\
            \leq & \; \Trace(\barZ) \max \{ 0,\lammax(-X)\} -  \innerproduct{\barZ}{ \Pi_{\snp}(X)  } \\
            & \quad+ \kappa 2n \mu\max\{0, \lammax(-X)\}+ \frac{2\kappa}{\kappa^\prime}\cdot \innerproduct{\Pi_{\snp}(X)}{\barZ} \\
            \leq &\; \Trace(\barZ) \max \{ 0,\lammax(-X)\} + \delta  \max \{ 0,\lammax(-X)\} \\
            = & \; \rho \max \{ 0,\lammax(-X)\} = l(X),  \quad \forall X \in \mathbb{B}(\barX,\mu).
        \end{aligned}
    \end{equation*}
\end{itemize}

\section{Growth growth under strict complementarity}
\label{appendix-section:proof-QG-obj}
As mentioned in the main text, the key step to prove \Cref{theorem:growth-error-bound,theorem:growth-error-bound-dual} is to establish the growth property \cref{eq:penalty-GQG-primal-new} in \cref{lemma:general-growth-g(x)} which we have proved in \cref{appendix-sec:detail-general-growth}. Once \cref{lemma:general-growth-g(x)} is established, with strict complementarity assumption, we can deduce \cref{thm:QG-p}. In this appendix, we first provide proof for \cref{thm:QG-p} in \cref{subsec:proof-thm:QG-p} and use a simple example to illustrate the quadratic growth property in \cref{subsec:illustration-QG}.

\subsection{Proof of \cref{thm:QG-p}}
\label{subsec:proof-thm:QG-p}
The following proof is adapted from \cite{cui2016asymptotic}. We first prove the primal case. Fix a $\mu > 0$ and $ \Xstar \in \psolution$. As the problem is convex, a pair of primal and dual solutions $(\hatXstar,\hatystar,\hatZstar)$ solves \cref{eq:conic-r-p} and its dual problem if and only if the following KKT system holds
\begin{equation*}
    \Amap(\hatXstar) = b, \; \hatZstar =  C - \Ajmap(\hatystar), \;  0 \in \hatZstar+ \partial g (\hatXstar).
\end{equation*}
Combined with the fact that any primal and dual solution forms a pair, the optimal primal solution $\psolution$ can be characterized as 
\begin{equation*}
          \psolution  = \mathcal{X}_0 \cap (\partial g)^{-1}(-\hatZstar), \quad \forall (\hatystar,\hatZstar) \in \dsolution,
\end{equation*}
where $\mathcal{X}_0=\{X \in \sn \mid \Amap(X) = b\}$. Let $(\bXstar ,\bystar,\bZstar)$ be a pair of primal and dual solutions satisfying $\bXstar \in \relint((\partial g)^{-1}(-\bZstar))$ (strict complementarity assumption). 
 There exist $\alpha_1,\kappa_1,\kappa_2 >0$ such that for all $X \in \mathbb{B}(\Xstar,\mu)$ the following holds
    \begin{equation}
    \label{eq:dist-split}
    \begin{aligned}
        \Dist^2(X,\psolution)& = \Dist^2(X,\mathcal{X}_0 \cap (\partial g)^{-1}(-\bZstar))\\
        & \leq \alpha_1(\Dist(X,\mathcal{X}_0) +\Dist(X,(\partial g)^{-1}(-\bZstar)))^2\\
        & \leq \kappa_1(\Dist^2(X,\mathcal{X}_0) + \Dist^2(X,(\partial g)^{-1}(-\bZstar))) \\
& \leq \kappa_2(\|\Amap(X) - b\|^2 + \Dist^2(X,(\partial g)^{-1}(-\bZstar))),
    \end{aligned}
    \end{equation}
    where the first inequality applies \cref{prop:blr-sufficient}, the second inequality applies $(A+B)^2 \leq 2A^2 + 2B^2 $ for all $A,B\geq 0$, the third inequality uses the fact that $\mathcal{X}_0$ is an affine space.
    On the other hand, we know $\innerproduct{\hatXstar}{\bZstar} = 0$ for all $\hatXstar \in \psolution$ naturally, so \cref{lemma:general-growth-g(x)} can be applied here on $\Zstar$ and $\bZstar$ (the proof of \cref{lemma:general-growth-g(x)} is provided in \cref{appendix:exact-penalty-function-growth}). 
For all $X \in \mathbb{B}(\Xstar,\mu)$ it holds that
    \begin{equation*}
        \begin{aligned}
        \fp(X) & =\innerproduct{C}{X} + g(X) \\  & \geq \innerproduct{C}{X}+g(\Xstar)+ \innerproduct{\Ajmap(\bystar) -C}{X - \Xstar} + \kappa \cdot \Dist^2(X,(\partial g)^{-1}(-\bZstar))  \\ & \geq \innerproduct{C}{X}+g(\Xstar)+ \innerproduct{\Ajmap(\bystar) -C}{X - \Xstar} +  
        \frac{\kappa}{\kappa_2}
        \Dist^2(X,\psolution) - \kappa \|\Amap(X)-b\|^2 \\
    & \geq  \fp(\Xstar) - \|\bystar\|\|\Amap(X)-b\|  - \kappa \|\Amap(X)-b\|^2  +  \frac{\kappa}{\kappa_2} \cdot \Dist^2(X,\psolution) \\
    &=  \fp(\Xstar) - (\|\bystar\|+\kappa\|\Amap(X)-b\|)\|\Amap(X)-b\|  +  \frac{\kappa}{\kappa_2} \cdot \Dist^2(X,\psolution) \\
    &\geq  \fp(\Xstar) - (\|\bystar\|+\gamma)\|\Amap(X)-b\|  +  \frac{\kappa}{\kappa_2} \cdot \Dist^2(X,\psolution),
    \end{aligned}
    \end{equation*}
    where the first inequality uses the growth property on $g$ for $\Xstar$ and $\bZstar$ \cref{eq:penalty-GQG-primal-new}, the second inequality uses \cref{eq:dist-split}, the third inequality uses the definition of the adjoint operator and Cauchy–Schwarz inequality, and the last inequality uses the fact that $X$ is in a bounded set $\mathbb{B}(\Xstar,\mu)$ so there exists some $\gamma\geq  0$ such that $\kappa\|\Amap(X)-b\| \leq \gamma$. This completes the proof.
    
We move on to prove the dual case. Similarly, we let $(\ystar,\Zstar) \in \dsolution$ and $\mu > 0$ and characterize the optimal dual solution as 
\begin{equation*}
    \dsolution = \mathcal{Z}_0 \cap (\RR^{m} \times (\partial h)^{-1}(-\hatXstar)), \quad \forall \hatXstar \in \psolution,
\end{equation*}
where $\mathcal{Z}_0 = \{(y,Z) \in \RR^m \times \sn \mid  C- \Ajmap(y) = Z \}$. Let $(\bXstar,\bystar,\bZstar)$ be a pair of primal and dual solution satisfying $\bZstar \in \relint((\partial h)^{-1}(-\bXstar)).$ 
Note that $$\relint(\RR^m \times (\partial h)^{-1}(-\bXstar)) = \RR^m \times \relint((\partial h)^{-1}(-\bXstar)).$$
Therefore, the existence of such a pair of strict complementarity solution implies 
 \begin{equation*}
     \mathcal{Z}_0 \cap \relint(\RR^m \times (\partial h)^{-1}(-\bXstar)) \neq \emptyset.
 \end{equation*}
 Hence, following the same argument to \cref{eq:dist-split}, we know there exists a constant $\kappa_3 > 0$ such that 
 \begin{equation}
    \begin{aligned}
    \label{eq:dist-split-dual}
        \Dist^2((y,Z),\dsolution)
 \leq \kappa_3(\|C-\Ajmap(y) -Z \|^2 + \Dist^2((y,Z),(\partial h)^{-1}(-\bXstar)))
    \end{aligned}
\end{equation}
for all $ (y,Z) \in \mathbb{B}((\ystar,\Zstar),\mu)$. Also, as $\innerproduct{\bXstar}{\Zstar} = 0$, we have
\begin{equation*}
    \begin{aligned}
        \fd(y,Z) & = -\innerproduct{b}{y}+h(Z) \\
        & \geq -\innerproduct{b}{y}+h(\Zstar)+ \innerproduct{-\Xstar}{Z - \Zstar} + \kappa \cdot \Dist^2((y,Z),(\partial h)^{-1}(-\bXstar))  \\ 
    & = h(\Zstar)+ \innerproduct{-\bXstar}{\Ajmap(y)-Z - C+\Ajmap(\ystar)} + \kappa \cdot \Dist^2((y,Z),(\partial h)^{-1}(-\bXstar))  
    \\
    & \geq  \fd(\ystar,\Zstar) - \|\bXstar\|\|C-\Ajmap(y)-Z\|  - \kappa \|C-\Ajmap(y)-Z\|^2  +  \frac{\kappa}{\kappa_3} \cdot \Dist^2((y,Z),\dsolution) \\
    &=   \fd(\ystar,\Zstar) - (\|\bXstar\|+\kappa\|C-\Ajmap(y)-Z\|)\|C-\Ajmap(y)-Z\|  +  \frac{\kappa}{\kappa_3} \cdot \Dist^2((y,Z),\dsolution) \\
    &\geq   \fd(\ystar,\Zstar) - (\|\bXstar\|+\gamma)\|C-\Ajmap(y)-Z\|  +  \frac{\kappa}{\kappa_3} \cdot \Dist^2((y,Z),\dsolution),
    \end{aligned}
\end{equation*}
where the first inequality uses the growth property on $h$, the second equality rewrites $\innerproduct{b}{y} = \innerproduct{\bXstar}{\Ajmap(y)}$ and $\Zstar = C - \Ajmap(\ystar)$, and the last inequality uses the fact that $(y,Z)$ is in a bounded set $\mathbb{B}((\ystar,\Zstar),\mu)$ so there exists a $\gamma\geq 0$ such that $\kappa\|C-\Ajmap(y)-Z\| \leq \gamma$. 

\subsection{Illustration of quadratic growth}
\label{subsec:illustration-QG}
In this subsection, we use a simple example to illustrate the quadratic growth property discussed in \cref{eq:QG-conic-programs}. In particular, we choose the exact penalty formulation for dual SDP \cref{eq:conic-r-d} (the proof for the exact penalty function can be found in \cref{appendix-sec:exact-penalty}). Moreover, the following example shows that even a simple $2 \times 2$ SDP \textit{can not} have \textit{sharp} growth in the objective function.
\begin{example}
    Consider a SDP \cref{eq:primal-conic,eq:dual-conic} with the problem data
    \begin{equation*}
        \begin{aligned}
            C = \begin{bmatrix}
                1 & -1 \\
                -1 & 1
            \end{bmatrix}, A_1 = \begin{bmatrix}
                1 & 0 \\
                0 & 0
            \end{bmatrix},A_2 = \begin{bmatrix}
                0 & 0 \\
                0 & 1
            \end{bmatrix}, ~\text{and}~b = \begin{bmatrix}
                1 \\1
            \end{bmatrix}
        \end{aligned}.
    \end{equation*} 
    It can be verified that both primal and dual SDP have a unique solution 
    \begin{equation*}
        \begin{aligned}
            \Xstar = \begin{bmatrix}
                1 & 1 \\
                1 & 1
            \end{bmatrix}, \ystar = \begin{bmatrix}
                0 \\ 0
            \end{bmatrix}
        \end{aligned},~\text{and}~ \Zstar = \begin{bmatrix}
                1 & -1 \\
                -1 & 1
            \end{bmatrix}.
    \end{equation*}
    Therefore, the optimal cost value is $0$. Moreover, it also satisfies strict complementarity as $ \mathrm{rank}(\Xstar) + \mathrm{rank}(\Zstar) = 1 + 1 = 2$ (see \cref{def:SC-sdp}). \cref{eq:error-bound-conic-dual,eq:QG-conic-programs-dual} then ensure the following exact penalized formulation having quadratic growth property 
    \begin{equation*}
         f(y): =-b^\tr y + \rho \max \{0,\lammax(\Ajmap(y) - C)\},
    \end{equation*}
    where $\rho$ can be chosen as any number satisfying $\rho > \Trace(\Zstar) = 2$. Let $S = \{ (0,0) \}$ be the optimal dual solution set and $f^\star = 0$ be the optimal cost value. \cref{fig:QG-3D} shows the landscape of the function $f$ with $\rho = 4$ and verifies that the function value $f(y)$ is growing at least quadratically away from the distance to the solution set $S = \{ (0,0) \}$ with the quadratic constant $\kappa = 0.3$. In \cref{fig:QG-3D}, the yellow area indicates the part where only the linear component of $f(y)$, i.e. $b^\tr y$, is active while the nonlinear penalty component $\rho \max \{0,\lammax(\Ajmap(y) - C)\}$ is $0$, and the blue region is the part where both the linear and nonlinear components of $f(y)$ are active.
    The black line in \cref{fig:QG-3D} is the boundary 
    where $\lammax(\Ajmap(y) - C) = 0$ which can be characterized by the nonlinear equation $y_1y_2 - y_1 - y_2 = 0$ with $y_1 < 1$.  
    This nonlinear direction also indicates that sharp growth of $f$ is impossible. Indeed, let $y_1 \in [0,1)$ and $y_2 = \frac{y_1}{y_1 -1}$. Note that this choice of $y$ satisfies the nonlinear equation $y_1y_2 - y_1 - y_2 = 0$ and thus only the linear part $-b^\tr y $ in $f(y)$ is in force. We then have
    \begin{equation*}
        f(y) -f^\star  = -y_1 - y_2 = -y_1 - \frac{y_1}{y_1-1} = \frac{-y_1^2}{y_1-1}.
    \end{equation*}
    On the other hand, the distance to the solution set can be verified as
    \begin{equation*}
        \Dist(y,S) = \sqrt{y_1^2 + \frac{y_1^2}{(y_1 - 1)^2}} = \left | \frac{y_1 \sqrt{1+(y_1 - 1)^2}}{y_1 -1 } \right | \geq \left | \frac{y_1}{y_1 -1 }  \right |.
    \end{equation*}
    It becomes clear that it is impossible to have a constant $\kappa > 0 $ such that 
    \begin{equation*}
        \kappa \left | \frac{y_1}{y_1 -1 } \right | \leq  \frac{-y_1^2}{y_1 -1}  = f(y) - f^\star,\quad \forall y_1 \in [0,1)
    \end{equation*}
    as the quadratic therm $y_1^2$ will dominant when $y_1$ is small, let alone the sharp growth $\kappa \cdot \Dist(y,S) \leq f(y) - f^\star$.    
    Moreover, the quadratic growth of indicator function formulation $-b^\tr y + \delta_{\mathbb{S}^2_+}(C - \Ajmap(y))$ has also been verified as we only need to consider the linear component $-b^\tr y$, which has already been confirmed in the previous case. 
    Lastly, it is worth pointing out that the sublevel sets of the functions $f$ and $-b^\tr y + \delta_{\mathbb{S}^2_+}(C - \Ajmap(y))$ are different. Indeed, given a finite value $\beta > 0$, the sublevel set $\{y \in \RR^2 \mid f(y)\leq \beta\}$ contains both parts of yellow and blue regions in \cref{fig:QG-Sectional}. In contrast, the sublevel set $\{y \in \RR^2 \mid -b^\tr y + \delta_{\mathbb{S}^2_+}(C - \Ajmap(y))\leq \beta\}$ only contains the linear part in \cref{fig:QG-Sectional}.
\end{example}

\begin{figure}[t]
     \centering
     \subfloat[\label{fig:QG-3D} 3-dimensional view]
        {\includegraphics[width=0.5\textwidth]{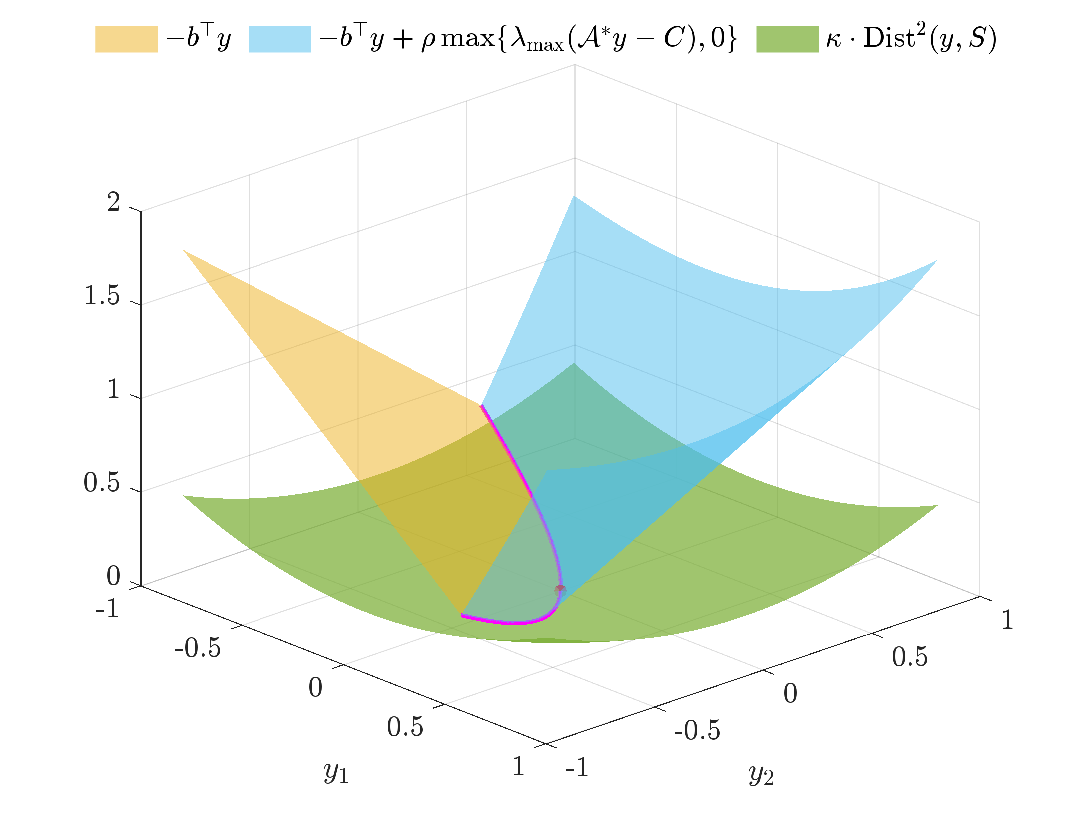}}
        \subfloat[\label{fig:QG-Sectional} Sectional view]
        {\includegraphics[width=0.4\textwidth]{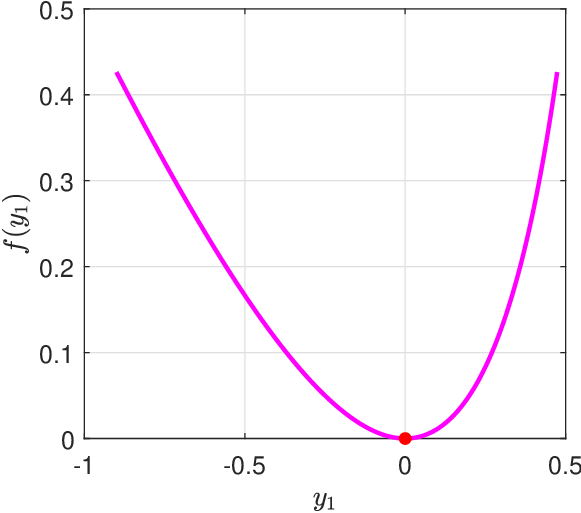}}
        \caption{The quadratric growth property of the exact penalty function $f(y)=-b^\tr y + \rho \max \{0,\lammax(\Ajmap(y) - C)\}$ where $\rho = 4$ and $f^\star = 0$. The optimal solution set $S = \{(0,0)\}.$ In \cref{fig:QG-3D}, the yellow region represents the linear part where only $-b^\tr y $ is active, resulting $\rho \max \{0,\lammax(\Ajmap(y) - C)\} = 0$, the blue region encompasses both the linear and the nonlinear parts, and the green surf is the square of the distance to the optimal solution set with $\kappa = 0.3$.
        \cref{fig:QG-Sectional} shows the sectional view of $f$ along the direction $y_1y_2 - y_1 - y_2 = 0$, which can be characterized as the rational function $f(y_1) = \frac{-y_1^2}{y_1-1}$.}
        \label{fig:QG}
\end{figure}

\section{Proofs and supplemental discussions in  \cref{section:linear-convergence}}
In this section, we complete the missing proof in \cref{section:linear-convergence} and discuss the ALM applied to \cref{eq:dual-conic} with its corresponding convergence properties. This section is divided into three subsections. \cref{subsection:proof-asymptotic} completes the proof of \cref{prop:asymptotic-conic-alm}. \cref{appendix:subsection-proof-KKT} finishes the proof of \cref{prop:KKT-residual}. \cref{appendix:ALMs-dual} discusses the ALM applied to \cref{eq:dual-conic}.

\subsection{Proof of \cref{prop:asymptotic-conic-alm}}
\label{appendix-subsection:different-alms}
\label{subsection:proof-asymptotic}
The proof of \cref{prop:asymptotic-conic-alm} requires a nice characterization of the ordinal dual function and the augmented dual function defined as 
\begin{equation}
    \label{eq:augmented-dual-function}
    g_r(w) = \inf_{X \in \sn} L_{r}(X,w)
\end{equation}
where $L_{r}$ is the augmented Lagrangian function \cref{eq:aug-function-review}, recall that we write $w = (y,Z)$.
 \begin{proposition}[{\cite[Theorem 3.2]{rockafellar1973dual}}]
    \label{prop:Moreau-envelope}
     Let $r > 0.$ For all $w \in \RR^m \times \sn$, it holds that
     \begin{equation*}
         g_r(w) = \min_{X \in \sn}\; L_{r}(X,w) = \max_{u \in \RR^m \times \sn}\; g_0(u) -  \frac{1}{2r}\|u-w\|^2,
     \end{equation*}
     where $g_0$ is the dual function in \cref{eq:dual-function-review}. 
 \end{proposition}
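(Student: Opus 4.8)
The right-hand side of the claimed identity is (the negative of) the Moreau envelope, with parameter $r$, of the extended-valued concave dual function $g_0$ --- understood to equal $-\infty$ outside $\RR^m \times \snp$ --- so the plan is to realize the augmented Lagrangian $L_{r}(X,\cdot)$ itself as a partial maximization over the multipliers and then exchange the order of the two optimizations. Concretely, I would first establish the pointwise identity
\[
  L_{r}(X,w) \;=\; \max_{u \in \RR^m \times \snp}\; \Big[\, L_0(X,u) - \tfrac{1}{2r}\|u - w\|^2 \,\Big], \qquad \forall\, X \in \sn,\ w \in \RR^m \times \snp,
\]
and then swap $\inf_X$ and $\max_u$.

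For the displayed identity, write $u = (\hat y, \hat Z)$ and $w = (y, Z)$. The bracketed objective separates into a piece in $\hat y \in \RR^m$ and a piece in $\hat Z \in \snp$. The $\hat y$-maximization is an unconstrained concave quadratic and, by completing the square, contributes exactly $\tfrac{1}{2r}(\|y + r(b - \Amap(X))\|^2 - \|y\|^2)$; the $\hat Z$-maximization is a concave quadratic over $\snp$, and since the proximal map of $\delta_{\snp}$ is the projection $\Pi_{\snp}$, it contributes $\tfrac{1}{2r}(\|\Pi_{\snp}(Z - rX)\|^2 - \|Z\|^2)$ with maximizer $\hat Z^\star = \Pi_{\snp}(Z - rX)$ --- exactly the $Z$-update \cref{eq:alm-update-primal-conic-c}. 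Adding $\innerproduct{C}{X}$ recovers $L_{r}(X,w)$ as written in \cref{eq:aug-function-review}. This is a routine completing-the-square computation and I would not belabor it.

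With the identity in hand, set $\phi(X,u) := L_0(X,u) - \tfrac{1}{2r}\|u - w\|^2$. For each fixed $u$ the map $X \mapsto \phi(X,u)$ is affine, hence convex and lower semicontinuous; for each fixed $X$ the map $u \mapsto \phi(X,u)$ is $1/r$-strongly concave, upper semicontinuous, and therefore coercive on $\RR^m \times \snp$. I would invoke a minimax theorem (Sion-type, the strong concavity and coercivity in $u$ compensating for the non-compactness) to obtain
\[
  g_r(w) = \inf_X \max_u \phi(X,u) = \max_u \inf_X \phi(X,u) = \max_{u \in \RR^m \times \snp}\Big[\, g_0(u) - \tfrac{1}{2r}\|u - w\|^2 \,\Big],
\]
using $\inf_X \phi(X,u) = g_0(u) - \tfrac{1}{2r}\|u - w\|^2$ from the definition \cref{eq:dual-function-review}. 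Strong concavity and coercivity force the outer supremum to be attained at a unique point (namely $\prox_{r,-g_0}(w)$), which is why it is a $\max$. To see that the left-hand infimum over $X$ is also attained --- so that $g_r(w) = \min_X L_{r}(X,w)$ --- I would note that $L_{r}(\cdot,w)$ is closed convex and, by the identity just proved, bounded below, and that any recession direction $D$ must satisfy $\Amap(D) = 0$ and $D \in \snp$ (otherwise one of the two squared-norm penalty terms in \cref{eq:aug-function-review} drives $L_{r} \to +\infty$ along $D$); along such $D$ the recession rate equals $\innerproduct{C}{D}$, which must be nonnegative, since $\innerproduct{C}{D} < 0$ for such a $D$ would certify unboundedness of \cref{eq:primal-conic} and hence, by weak duality, infeasibility of \cref{eq:dual-conic}, contradicting $\dsolution \neq \emptyset$. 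A closed proper convex function that is bounded below with nonnegative recession function attains its infimum.

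The hard part will be the min--max exchange together with the two attainment statements: one must set up the minimax theorem so that the non-compactness of both feasible sets is absorbed by the $u$-side coercivity, and then argue separately that the outer infimum over $X$ is attained for \emph{every} $w$ --- this last point, which holds here precisely because the dual is feasible, is exactly the property that makes the augmented Lagrangian subproblem well posed even when the primal SDP \cref{eq:primal-conic} itself has no optimal solution. (If one prefers to avoid reproving a classical fact, the statement is a specialization of \cite[Theorem 3.2]{rockafellar1973dual} and may simply be cited.)
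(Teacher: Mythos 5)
The paper does not prove this proposition at all --- it is imported wholesale from \cite[Theorem 3.2]{rockafellar1973dual} --- so your submission is a reconstruction of a classical argument rather than an alternative to anything in the text. Your overall strategy is the right one: represent $L_r(X,\cdot)$ as the partial proximal regularization $\sup_{u}\{L_0(X,u)-\tfrac{1}{2r}\|u-w\|^2\}$ of the ordinary Lagrangian and then exchange $\inf_X$ with $\sup_u$. The completing-the-square step is correct and does reproduce \cref{eq:aug-function-review} (note, though, that it requires the multiplier term $-\innerproduct{\hat Z}{X}$ rather than the $+\innerproduct{Z}{X}$ written in the paper's definition of $L_0$; your computation silently uses the sign that makes $\Pi_{\snp}(Z-rX)$ come out). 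For the exchange itself, plain Sion does not apply because neither variable ranges over a compact set; you need the closed-saddle-function version in which compactness on one side is replaced by absence of recession directions (supplied by the $1/r$-strong concavity in $u$), e.g.\ Rockafellar, \emph{Convex Analysis}, \S 37. Cleaner still, and closer to how \cite{rockafellar1973dual} argues, is to avoid minimax entirely: with $\varphi$ the perturbation function of \cref{eq:primal-conic} one has $\inf_X L_r(X,w)=\inf_p\{\varphi(p)+\innerproduct{w}{p}+\tfrac r2\|p\|^2\}=-\bigl(\varphi^*\,\square\,\tfrac1{2r}\|\cdot\|^2\bigr)(-w)$, the infimal convolution being exact because $\tfrac r2\|\cdot\|^2$ is finite everywhere; since $\varphi^*=-g_0$ up to the sign of the argument, this is the asserted Moreau envelope, attained at $\prox_{r,-g_0}(w)$ exactly as you say.

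The genuine gap is your attainment argument for the inner minimum over $X$. The principle you invoke --- a closed proper convex function that is bounded below with nonnegative recession function attains its infimum --- is false ($f(x)=e^{-x}$), and here the conclusion itself fails. Take $n=2$, $m=1$, $C=\left[\begin{smallmatrix}1&0\\0&0\end{smallmatrix}\right]$, $A_1=\left[\begin{smallmatrix}0&1\\1&0\end{smallmatrix}\right]$, $b=2$: the primal infimum ($\min X_{11}$ over $X_{12}=1$, $X\in\snp$) is $0$ but unattained, the dual is feasible with $\dsolution=\{(0,C)\}$ compact, and strong duality holds. For $w=(0,0)$, $r=1$, the right-hand side equals $g_0(0,C)-\tfrac12\|C\|^2=-\tfrac12$, while $L_1(X,0,0)=X_{11}+2(1-X_{12})^2+\tfrac12\|\Pi_{\snp}(-X)\|^2$ has no stationary point (the optimality condition forces $X_{12}=1$ and $X_{12}=0$ simultaneously) and only approaches $-\tfrac12$ along $X=\left[\begin{smallmatrix}-1&1\\1&d\end{smallmatrix}\right]$ as $d\to\infty$. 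Your own recession analysis is consistent with this: $D=\left[\begin{smallmatrix}0&0\\0&1\end{smallmatrix}\right]$ is a recession direction with $\innerproduct{C}{D}=0$ along which $L_1$ decreases strictly but sublinearly --- precisely the $e^{-x}$ phenomenon. So the ``$\min$'' in the statement must be read as ``$\inf$'' (which is all that is used downstream, cf.\ \cref{prop:PPM-ALM-review}); the value identity and the attainment of the outer maximum are the substantive content, and those parts of your argument are sound once the minimax step is backed by the correct theorem.
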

 \cref{prop:Moreau-envelope} not only shows the augmented dual function $g_r$ is $g_0$ with a prior but also shows $g_r$ is continuously differentiable. In comparison, the ordinary dual function $g_0$ can be highly nonsmooth. We are ready to prove \cref{prop:asymptotic-conic-alm}. 
 
 \begin{itemize}
[leftmargin=*,align=left,noitemsep]
\item  Part (a) of \cref{prop:asymptotic-conic-alm} comes directly from the PPM convergence \cref{prop:convergence-ppm} as 
\cref{prop:PPM-ALM-review} and the stopping criteria \cref{eq:stopping-alm-conic-1} naturally imply the dual iterate $w_{k+1}$ satisfies 
$\|w_{k+1} - \prox_{r_k,-g_0}(w_k)\| \leq \epsilon_k$ and $\sum_{k=1}^\infty \epsilon_k < \infty$.

\item  The proof of (b) in \cref{prop:asymptotic-conic-alm}  is in fact straightforward. 
    Firstly, by Moreau decomposition \cite[Exercise 12.22]{rockafellar2009variational} and  \cref{eq:alm-update-primal-conic-c}, we have 
    \begin{equation*}
        \begin{aligned}
            r_kX_{k+1}-Z_k = \Pi_{\snp}(r_kX_{k+1}-Z_k) - \Pi_{\snp}(Z_k - r_kX_{k+1}) = \Pi_{\snp}(r_kX_{k+1}-Z_k) - Z_{k+1}.
        \end{aligned}
    \end{equation*}
    Diving both sides by $r_k$, we conclude 
    $X_{k+1} + (Z_{k+1} - Z_k)/r_k \in \snp.$
    Therefore, it follows that
    \begin{equation*}
        \begin{aligned}
            \Dist(X_{k+1},\snp) \leq \| X_{k+1} - (X_{k+1} + (Z_{k+1} - Z_k)/r_k)\| =\| Z_{k+1} - Z_k\|/r_k.
        \end{aligned}
    \end{equation*}
    Secondly, $\|\Amap(X_{k+1})-b\| = \|y_k - y_{k+1}\|/r_k $ comes directly from the update \cref{eq:alm-update-conic-primal-b}.
    
    Lastly, note that
    \begin{equation*}
        L_{r_k}(X_{k+1},w_k)  =  \innerproduct{C}{X_{k+1}} + \frac{1}{2r_k}(\|w_{k+1}\|^2 -\|w_k\|^2). 
    \end{equation*}
    On the other hand, we have 
    \begin{equation*}
        \min_{X \in \sn} L_{r_k}(X,w_k) = g_{r_k}(w_k) = g_0(u^\star) - \frac{1}{2r_k}\|u^\star - w_k\|^2 \leq g_0(u^\star) \leq \pstar, 
    \end{equation*}
    where $u^\star$ is the point that achieves the maximum in \cref{prop:Moreau-envelope} and the last inequality uses weakly duality. Therefore, we have 
    \begin{equation*}
        \begin{aligned}
            \innerproduct{C}{X_{k+1}} - \pstar \leq L_{r_k}(X_{k+1},w_k) -\min_{X \in \sn} L_{r_k}(X,w_k) -\frac{1}{2r_k}(\|w_{k+1}\|^2 -\|w_k\|^2).
        \end{aligned}
    \end{equation*}
Since the convergence of $\{w_k\}$ is guaranteed in (a), the quantities $\Dist(X_{k+1},\snp),\|\Amap(X_{k+1})-b\|,$ and $\innerproduct{C}{X_{k+1}} - \pstar$ also convergence to zero.

\item  Part (c) is a direct consequence of part (b) and the fact that $\psolution$ is bounded if and only if the set $\{X \in \sn \mid \Dist(X,\snp) \leq \gamma_1, \|\Amap(X) - b\| \! \leq \! \gamma_2, \innerproduct{C}{X} - \pstar \! \leq \! \gamma_3 \}$ is bounded for any $\gamma \!\in \! \RR^3$ \cite[page 110]{rockafellar1976augmented}.
\end{itemize}

\subsection{Proof of \cref{prop:KKT-residual}}
\label{appendix:subsection-proof-KKT}
In this subsection, we provide a complete proof of part (a) in \cref{prop:KKT-residual}. 
The proof is motivated by \cite[Theorem 1]{cui2019r} which focuses on dual SDPs only. 
To facilitate proof, we first introduce a standard result in proximal mapping that upper bounds the step length of the proximal step by the distance to the optimal solution set, which can be found in \cite[Proposition 1]{rockafellar1976monotone} by setting $z = x$ and $z^\prime = \Pi_{S}(x)$. We give a simple proof below.
\begin{proposition}
    \label{prop:property-proximal}
    Let $f:\RR^n \to \Bar{\RR}$ be a convex function. Denote $S \subseteq \RR^n $ as the set of minimizers of $f$, i.e., $S = \argmin_{x \in \RR^n} f(x)$. Suppose $S \neq 0$. Given a point $x \in \RR^n$ and a constant $\alpha > 0$, the proximal mapping holds that
    \begin{equation*}
        \|\prox_{\alpha,f}(x) - x\|\leq \Dist(x,S).
    \end{equation*}
\end{proposition}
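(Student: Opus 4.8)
The plan is to combine the first-order optimality condition for the proximal subproblem with monotonicity of the subdifferential. First I would set $p := \prox_{\alpha,f}(x)$ and recall that $p$ is the unique minimizer of $y \mapsto f(y) + \frac{1}{2\alpha}\|y-x\|^2$. Since the quadratic term is finite-valued everywhere, the subdifferential sum rule applies without a qualification condition, so Fermat's rule gives $0 \in \partial f(p) + \frac{1}{\alpha}(p-x)$, i.e.\ $\frac{1}{\alpha}(x-p) \in \partial f(p)$.

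Next I would choose the closest optimal point $\bar{s} := \Pi_S(x)$, which is well defined since $S=\argmin_x f(x)$ is closed, convex, and nonempty; by construction $\|x-\bar{s}\| = \Dist(x,S)$. Because $\bar{s}$ minimizes $f$, Fermat's rule again yields $0 \in \partial f(\bar{s})$. Monotonicity of $\partial f$ applied to the pairs $(p, \frac{1}{\alpha}(x-p))$ and $(\bar{s}, 0)$ then gives $\innerproduct{\frac{1}{\alpha}(x-p)}{p-\bar{s}} \geq 0$, and multiplying through by $\alpha>0$ we obtain $\innerproduct{x-p}{p-\bar{s}} \geq 0$.

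To finish, I would write $\|x-p\|^2 = \innerproduct{x-p}{x-\bar{s}} + \innerproduct{x-p}{\bar{s}-p}$ and drop the second term, which is nonpositive by the previous inequality; Cauchy--Schwarz applied to the first term gives $\|x-p\|^2 \leq \|x-p\|\,\|x-\bar{s}\|$, hence $\|x-p\| \leq \|x-\bar{s}\| = \Dist(x,S)$. The degenerate case $x \in S$ is immediate, since then $p = x$ and both sides vanish.

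I do not anticipate any genuine obstacle: this is a textbook argument. The only points deserving a word of care are the justification of the subdifferential sum rule (automatic here, as the Moreau quadratic is everywhere finite) and the use of $\bar{s} = \Pi_S(x)$, which relies only on $S$ being a nonempty closed convex set — a consequence of $f$ being proper, closed, and convex with a nonempty solution set.
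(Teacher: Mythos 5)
Your argument is correct, but it takes a different route from the paper's. The paper's proof is a one-line value comparison: since $p:=\prox_{\alpha,f}(x)$ minimizes the prox objective, $f(p)+\tfrac{1}{2\alpha}\|p-x\|^2\leq f(\Pi_S(x))+\tfrac{1}{2\alpha}\|\Pi_S(x)-x\|^2$, and since $\Pi_S(x)$ minimizes $f$ one has $f(\Pi_S(x))-f(p)\leq 0$, so $\|p-x\|^2\leq\|\Pi_S(x)-x\|^2=\Dist^2(x,S)$. That argument needs no subdifferential calculus at all. You instead pass through the first-order optimality condition $\tfrac{1}{\alpha}(x-p)\in\partial f(p)$, pair it with $0\in\partial f(\Pi_S(x))$ via monotonicity to get $\innerproduct{x-p}{p-\Pi_S(x)}\geq 0$, and finish with Cauchy--Schwarz. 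Each step you take is valid (the sum rule applies since the Moreau quadratic is finite everywhere, and $S$ is nonempty closed convex for $f$ proper closed convex, which is the standing assumption in the paper's PPM section). What your route buys is the stronger intermediate inequality $\innerproduct{x-p}{p-\Pi_S(x)}\geq 0$ --- essentially the firm-nonexpansiveness mechanism --- and it extends verbatim to resolvents of maximal monotone operators, where there is no objective value to compare. What the paper's route buys is brevity and the avoidance of any regularity discussion. Both are complete proofs of the stated claim.
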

\begin{proof}
    From the optimality condition of proximal mapping, we have 
    \begin{equation*}
    \begin{aligned}
        & \; f(\prox_{\alpha,f}(x)) + \frac{\alpha}{2}\|\prox_{\alpha,f}(x)- x\|^2 \leq f(\Pi_{S}(x)) + \frac{\alpha}{2}\|\Pi_{S}(x)-x\|^2  \\
        \Longrightarrow & \; \frac{\alpha}{2}\|\prox_{\alpha,f}(x)- x\|^2 \leq f(\Pi_{S}(x)) - f(\prox_{\alpha,f}(x)) + \frac{\alpha}{2}\|\Pi_{S}(x)-x\|^2\\
       \Longrightarrow &\; \|\prox_{\alpha,f}(x)-x\| \leq \Dist(x,S),
    \end{aligned}
    \end{equation*}
    which completes the proof.
\end{proof}
Using \cref{prop:property-proximal,eq:stopping-alm-conic-2}, we arrive at the following key inequality to show the linear convergence of KKT residuals.
\cref{prop:asymptotic-conic-alm}. It bounds the step length of the dual update by the current distance to the solution set.
\begin{proposition}{\cite[Lemma 3]{cui2019r}}
\label{prop:consequence-stopping-b}
    Let $\{X_k,w_k\}$ be the sequence generated by \cref{eq:alm-update-conic-primal-both} under \cref{eq:stopping-alm-conic-2}. Then for all $k\geq 1$ with $\delta_k < 1$, it holds that 
    \begin{equation*}
        \|w_{k+1} - w_k\|\leq \frac{1}{1-\delta_k} \Dist(w_k,\dsolution).
    \end{equation*}
\end{proposition}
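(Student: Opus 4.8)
The plan is to combine three facts that are already in hand: the ALM--PPM correspondence of \cref{prop:PPM-ALM-review}, the inexactness criterion \cref{eq:stopping-alm-conic-2}, and the elementary proximal step-length estimate of \cref{prop:property-proximal}, and then to finish with a single triangle inequality followed by a rearrangement that uses $\delta_k < 1$.

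Concretely, first I would record that, in the setup of \cref{subsection:linear-ALM}, the function to which the proximal operator is applied is $f = -g_0$, where $g_0$ is the Lagrangian dual function of \cref{eq:dual-function-review}; this $f$ is proper, closed, and convex, and its set of minimizers is exactly $\dsolution$ (the set of dual optimal solutions), which is nonempty under the standing assumptions. Hence \cref{prop:property-proximal} applies with $S = \dsolution$ and the fixed parameter $\alpha = r_k$, giving
\begin{equation*}
\bigl\|\prox_{r_k,-g_0}(w_k) - w_k\bigr\| \;\le\; \Dist(w_k,\dsolution).
\end{equation*}

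Next, for the index $k$ at hand (with $\delta_k < 1$), criterion \cref{eq:stopping-alm-conic-2} bounds the right-hand side of the inequality in \cref{prop:PPM-ALM-review} by $\delta_k^2\|w_{k+1}-w_k\|^2/(2r_k)$, so that
\begin{equation*}
\frac{\bigl\|w_{k+1}-\prox_{r_k,-g_0}(w_k)\bigr\|^2}{2r_k}
\;\le\;
\frac{\delta_k^2\|w_{k+1}-w_k\|^2}{2r_k},
\qquad\text{i.e.}\qquad
\bigl\|w_{k+1}-\prox_{r_k,-g_0}(w_k)\bigr\| \;\le\; \delta_k\,\|w_{k+1}-w_k\|.
\end{equation*}
The triangle inequality applied to $w_{k+1}-w_k = \bigl(w_{k+1}-\prox_{r_k,-g_0}(w_k)\bigr) + \bigl(\prox_{r_k,-g_0}(w_k)-w_k\bigr)$ then yields $\|w_{k+1}-w_k\| \le \delta_k\|w_{k+1}-w_k\| + \Dist(w_k,\dsolution)$; subtracting $\delta_k\|w_{k+1}-w_k\|$ and dividing by $1-\delta_k > 0$ gives the claim.

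The argument is short and contains no hard estimates; the only thing worth double-checking is the bookkeeping around sign and domain conventions — namely that the set of minimizers of $-g_0$ really is $\dsolution$, so that \cref{prop:property-proximal} may be invoked with $S = \dsolution$ — and that the consequence of \cref{eq:stopping-alm-conic-2} is correctly extracted through \cref{prop:PPM-ALM-review}. This reproduces \cite[Lemma 3]{cui2019r}, which is cited in the statement.
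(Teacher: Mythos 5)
Your proof is correct and follows essentially the same route as the paper: both combine \cref{prop:PPM-ALM-review} with criterion \cref{eq:stopping-alm-conic-2} to get $\|w_{k+1}-\prox_{r_k,-g_0}(w_k)\|\leq \delta_k\|w_{k+1}-w_k\|$, invoke \cref{prop:property-proximal} for $\|w_k-\prox_{r_k,-g_0}(w_k)\|\leq \Dist(w_k,\dsolution)$, and finish with the triangle inequality and the rearrangement using $\delta_k<1$. Your explicit check that the minimizer set of $-g_0$ is $\dsolution$ is a small extra piece of bookkeeping the paper leaves implicit, but the argument is the same.
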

\begin{proof}
    From the triangle inequality and the stopping criterion \cref{eq:stopping-alm-conic-2}, we have
    \begin{equation*}
        \begin{aligned}
            \|w_{k+1}-w_k\|-\|w_k - \prox_{r_k,-g_0}(w_k)\|  \leq \|w_{k+1} - \prox_{r_k,-g_0}(w_k)\| \leq \delta_k \|w_{k+1} - w_k\|.
        \end{aligned}
    \end{equation*}
    Rearranging terms and using \cref{prop:property-proximal} yields
    \begin{equation*}
        (1-\delta_k)\|w_{k+1}-w_k\| \leq \|w_k - \prox_{r_k,-g_0}(w_k)\| \leq \Dist(w_k,\dsolution).
    \end{equation*}
    This completes the proof.
\end{proof}
We are ready to start the proof of part (a) here. As dual strict complementary holds, the negative of the dual function \cref{eq:dual-function-review}, i.e., $-g_0$, satisfies quadratic growth \cref{eq:QG-conic-programs-dual}. Using \cref{prop:PPM-ALM-review} and stopping criteria \cref{eq:stopping-alm-conic-2}, we have 
$\|w_{k+1} -\prox_{r_k,-g_0}(w_k) \| \leq \delta_k \|w_{k+1} - w_k\|$. Therefore, the linear reduction on $\Dist(w_k,\dsolution)$ comes directly from the PPM convergence \cref{prop:convergence-ppm}, i.e., there exists a $\hat{k}_1$ such that for all $k \geq \hat{k}_1$, it holds that
\begin{equation*}
    \Dist(w_{k+1},\dsolution)\leq \mu_k \Dist(w_{k},\dsolution),\quad \mu_k = \frac{\theta_k + 2 \delta_k}{1 - \delta_k} < 1, \quad {\theta}_{k} = \frac{1}{\sqrt{2 r_k \muq \!+\!1}} 
     < 1.
\end{equation*}
We move to show the bound for residuals $\Dist(X_{k+1},\snp),\|\Amap(X_{k+1})-b\|,$ and $\innerproduct{C}{X_{k+1}} - \pstar.$  Specifically, when $\delta_k < 1$, it holds that 
\begin{equation*}
        \begin{aligned}
            \Dist(X_{k+1},\snp) & \leq \frac{\|Z_k - Z_{k+1}\|}{r_k} \leq  \frac{\|w_k - w_{k+1}\|}{r_k} \leq \frac{1}{(1-\delta_k)r_k} \Dist(w_k,\dsolution), \\
            \|\Amap(x_{k+1})-b\|& \leq \frac{\|y_k - y_{k+1}\|}{r_k} \leq \frac{\|w_k - w_{k+1}\|}{r_k} \leq  \frac{1}{(1-\delta_k)r_k} \Dist(w_k,\dsolution),
            \end{aligned}
    \end{equation*}
    where we use part (b) in \cref{prop:asymptotic-conic-alm}, $\|y_k-y_{k+1}\|\leq \|w_k-w_{k+1}\|$, $\|Z_k-Z_{k+1}\|\leq \|w_k-w_{k+1}\|$, and \cref{prop:consequence-stopping-b}. Again, using part (b) in \cref{prop:asymptotic-conic-alm,prop:consequence-stopping-b}, when $\delta_k < 1$, we have 
    \begin{equation*}
        \begin{aligned}
            \innerproduct{C}{X_{k+1}} - \pstar&  \leq \frac{\delta_k^2 \|w_{k+1} - w_k\|^2 + \|w_k\|^2 - \|w_{k+1}\|^2 }{2r_k} \\
            & =\frac{\delta_k^2 \|w_{k+1} - w_k\|^2 + (\|w_k\| + \|w_{k+1}\|)(\|w_k\| - \|w_{k+1}\|) }{2r_k}  \\
            & \leq \frac{\delta_k^2 \|w_{k+1} - w_k\|^2 + (\|w_k\| + \|w_{k+1}\|)(\|w_k-w_{k+1}\|) }{2r_k}  \\
            & \leq \frac{\delta_k^2 \|w_{k+1} - w_k\| + \|w_k\| + \|w_{k+1}\| }{2r_k(1-\delta_k)}\Dist(w_k,\dsolution).
        \end{aligned}
    \end{equation*}
    Since $\delta_k \to 0 $, there exists a $\hat{k}_2$ such that $\delta_k < 1$ for all $k \geq \hat{k}_2$. Taking $\hat{k} = \max\{\hat{k}_1,\hat{k}_2\}$ completes the proof for part (a) with the constants
    \begin{equation*}
    \begin{aligned}
        &\mu_k^\prime = \frac{1}{(1-\delta_k)r_k} \to \mu_\infty^\prime = \frac{1}{r_\infty}, \\
        &\mu_k^{\prime \prime} = \frac{\delta_k^2 \|w_{k+1} - w_k\| + \|w_k\| + \|w_{k+1}\| }{2r_k(1-\delta_k)} \to \mu_\infty^{\prime \prime} = \frac{\|w_{\infty}\|}{r_\infty}. 
    \end{aligned}
    \end{equation*}

    We move on to part (b). By \cref{theorem:growth-error-bound-dual}, we know that the error bound \cref{eq:error-bound-conic} holds as dual strict complementary holds, i.e., for any bounded set $\mathcal{U} \subseteq \sn$ containing any $\Xstar \in \psolution$, there exist constants $\alpha_1,\alpha_2, \alpha_3 > 0$ such that 
        \begin{equation} \label{eq:error-bound-proof}
            \innerproduct{C}{X} - \innerproduct{C}{\Xstar}  + \alpha_1 \|\Amap(X) - b\| + \alpha_2 \cdot \Dist(X,\snp)  \geq  \alpha_3 \cdot \Dist^2(X,\psolution), \quad \forall X \in \mathcal{U}.
        \end{equation} From the assumption that $\psolution$ is bounded, \cref{prop:asymptotic-conic-alm} confirms that the sequence $\{X_k\}$ is bounded. Thus, we can choose a bounded set $\mathcal{U}\supseteq \psolution \bigcup \cup_{k \geq 1} \{X_k\} $.
        Combing \cref{eq:error-bound-proof} with \cref{eq:linear-primal-residual-dual-dist-ALM}, we have 
        \begin{equation*}
        \begin{aligned}
            \alpha_3 \cdot \Dist^2(X_{k+1},\psolution)& \leq  (\mu^{\prime \prime}_k 
      +  (\alpha_1+\alpha_2) \mu_k^\prime)\cdot\Dist(w_k,\dsolution).
        \end{aligned}
        \end{equation*}
        Dividing both sides by $\alpha_3$ leads to the desired result in part (b). This completes the proof.
\subsection{ALMs for dual conic program \cref{eq:dual-conic}} \label{appendix:ALMs-dual}
In aiming to offer a comprehensive understanding of ALMs for solving \cref{eq:primal-conic,eq:dual-conic}, in this subsection, we provide the algorithm of ALM applied to solving the dual problem \cref{eq:dual-conic} along with the convergence analysis. This can be considered as an extension of the work in \cite{cui2019r} and a counterpart to the ALM for solving \cref{eq:primal-conic} discussed in \cref{section:linear-convergence}.

We first reformulate \cref{eq:dual-conic} as the equivalent minimization problem
\begin{equation*}
    \begin{aligned}
        -\dstar = \min_{y\in \RR^m, Z \in \sn } & \quad \innerproduct{-b}{y} \\
        \mathrm{subject~to} & \quad C- \Ajmap(y) \in \snp.
    \end{aligned}
\end{equation*}
We then introduce a dual variable $X \in \snp$ and define the ordinary Lagrangian function as $L_{0}(y,X) = -\innerproduct{b}{y} + \innerproduct{X}{C-\Ajmap(y)}$. The corresponding dual function and dual problem become
\begin{equation}
    \label{eq:dual-function-dual}
    g_0(X) = \inf_{y \in \RR^m}\; L_0(y,X)
   \quad\text{and}\quad
   \max_{X \in \snp}\; g_0(X).
\end{equation}
Then, the augmented Lagrangian with parameter $r>0$, analogous to \cref{eq:aug-function-review}, is
\begin{equation*}
     L_{r}(y,X) =  \innerproduct{-b}{y} + \frac{1}{2r}(\|\Pi_{\snp}(X-r(C-\Ajmap(y)))\|^2 -\|X\|^2).
\end{equation*}
Given an initial point $X_0 \in \sn$ and a sequence of positive scalars $r_k\uparrow r_\infty$, the ALM generates a sequence of $\{y_k\}$  and  $\{X_k\}$ following 
\begin{subequations}  \label{eq:alm-dual-update-conic}
    \begin{align}
       y_{k+1} & \approx \argmin_{y\in\RR^m} L_{r_k}(y,X_k)  \label{eq:alm-dual-update-conic-a} \\
        X_{k+1}& = X_{k} + r_k\nabla_{X} {L}_{r_k}(y_{k+1},X_{k}) = \Pi_{\snp}(X_k-r_k(C-\Ajmap(y_{k+1}))),\quad k = 0, 1, \dots  \label{eq:alm-dual-update-conic-b}.
    \end{align}
\end{subequations}

Many of the same properties discussed in \cref{section:linear-convergence} hold. We provide the precise statements in the remainder of this section. We can consider the analogous stopping criteria of \cref{eq:stopping-alm-conic-1} and \cref{eq:stopping-alm-conic-2}.
 \begin{align}
         {L}_{r_k}(y_{k+1},X_{k})- \min_{y \in \RR^m} {L}_{r_k}(y,X_k) & \leq 
         \epsilon_k^2/(2r_k), \quad \textstyle \sum_{k=1}^\infty  \epsilon_k < \infty, \tag{$\text{A}^\prime_{\mathrm{D}}$} \label{eq:stopping-alm-dual-conic-1} \\
         {L}_{r_k}(y_{k+1},X_{k+1})- \min_{y\in \RR^m} {L}_{r_k}(y,X_k) & \leq 
         \delta_k^2 \| X_{k+1} -X_k\|^2/(2r_k), \quad  \textstyle \sum_{k=1}^\infty \delta_k < \infty. \tag{$\text{B}^\prime_{\mathrm{D}}$}
        \label{eq:stopping-alm-dual-conic-2}
\end{align}
\begin{proposition}
\label{prop:asymptotic-conic-alm-dual}
    Consider \cref{eq:primal-conic,eq:dual-conic}. Assume strong duality holds and $\psolution \neq \emptyset$. Let $\{y_k,X_k\}$ be a sequence from the ALM \cref{eq:alm-dual-update-conic} under \cref{eq:stopping-alm-dual-conic-1} and write $Z_k = C - \Ajmap (y_k)$ for all $k \geq 0$. The following statements hold.
    \begin{enumerate}[label=(\alph*),leftmargin=*,align=left,noitemsep]
        \item The dual sequence $X_k$ is bounded.
        Further, $\lim_{k \to \infty} X_k = X_{\infty} \in \psolution$ (i.e., the whole sequence converges to a primal optimal solution).
        \item The dual feasibility and cost value gap satisfy  
    \begin{equation*}
        \begin{aligned}
            \Dist(Z_{k+1},\snp)& \leq  \|X_k - X_{k+1}\|/r_k \to 0,  \\
            \dstar - \innerproduct{b}{y_{k+1}}  & \leq   L_{r_k}(y_{k+1},X_k) - \min_{y \in \RR^m} L_{r_k}(y,X_k)   +   ( \|X_k\|^2 -\|X_{k+1}\|^2)/(2r_k)\to 0.
    \end{aligned}  
    \end{equation*}
    
    \item If the solution set $\dsolution$ in \cref{eq:dual-solution} is nonempty and bounded, then the primal sequence $y_k$ is also bounded, and all of its cluster points belongs to $\dsolution$.
    \end{enumerate}
\end{proposition}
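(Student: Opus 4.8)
\textbf{Proof proposal for \cref{prop:asymptotic-conic-alm-dual}.}
The plan is to mirror the proof strategy of \cref{prop:asymptotic-conic-alm}, exploiting the fact that running the ALM \cref{eq:alm-dual-update-conic} on the dual problem \cref{eq:dual-conic} is equivalent to running the PPM on $-g_0$ where $g_0$ is the dual function in \cref{eq:dual-function-dual}. The roles of primal and dual are simply swapped compared to \cref{prop:asymptotic-conic-alm}: the ALM dual variable is now $X_k$ (a primal variable of \cref{eq:primal-conic}), and the ALM primal variable is $y_k$ together with the induced slack $Z_k = C - \Ajmap(y_k)$. First I would establish the analogue of \cref{prop:PPM-ALM-review}: by \cite[Proposition 6]{rockafellar1976augmented}, the iterate $X_{k+1}$ in \cref{eq:alm-dual-update-conic-b} satisfies $\|X_{k+1} - \prox_{r_k,-g_0}(X_k)\|^2/(2r_k) \leq L_{r_k}(y_{k+1},X_k) - \inf_y L_{r_k}(y,X_k)$, so the stopping criterion \cref{eq:stopping-alm-dual-conic-1} forces $\|X_{k+1} - \prox_{r_k,-g_0}(X_k)\| \leq \epsilon_k$ with $\sum \epsilon_k < \infty$. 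Invoking \cref{prop:convergence-ppm}(a) then yields part (a): $X_k \to X_\infty$ with $X_\infty$ a minimizer of $-g_0$, i.e., $X_\infty \in \psolution$ by strong duality (the dual of \cref{eq:dual-function-dual} recovers \cref{eq:primal-conic}, and $\psolution \neq \emptyset$ is assumed). Boundedness of $\{X_k\}$ is immediate from convergence.

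For part (b), I would argue exactly as in the proof of \cref{prop:asymptotic-conic-alm}(b). For dual feasibility, apply the Moreau decomposition \cite[Exercise 12.22]{rockafellar2009variational} to $X_k - r_k(C - \Ajmap(y_{k+1}))$ together with the update rule \cref{eq:alm-dual-update-conic-b} to deduce that $(C - \Ajmap(y_{k+1})) + (X_{k+1} - X_k)/r_k \in \snp$, which gives $\Dist(Z_{k+1},\snp) \leq \|X_k - X_{k+1}\|/r_k$. For the cost gap, use the explicit form of the augmented Lagrangian to write $L_{r_k}(y_{k+1},X_k) = -\innerproduct{b}{y_{k+1}} + (\|X_{k+1}\|^2 - \|X_k\|^2)/(2r_k)$, together with the fact that $\min_y L_{r_k}(y,X_k) = g_{r_k}(X_k) \leq g_0(u^\star) \leq -\dstar$ (weak duality applied to the pair \cref{eq:primal-conic}/\cref{eq:dual-conic}, using the analogue of \cref{prop:Moreau-envelope} for $g_0$ in \cref{eq:dual-function-dual}). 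Rearranging yields $\dstar - \innerproduct{b}{y_{k+1}} \leq L_{r_k}(y_{k+1},X_k) - \min_y L_{r_k}(y,X_k) + (\|X_k\|^2 - \|X_{k+1}\|^2)/(2r_k)$, and both quantities on the right tend to zero since $\{X_k\}$ converges and the optimality gap of the subproblem is controlled by \cref{eq:stopping-alm-dual-conic-1}.

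Part (c) is a consequence of part (b) and a coercivity-type characterization: $\dsolution$ is bounded if and only if, for every $\gamma \in \RR^2$, the set $\{(y,Z) \in \RR^m \times \sn \mid \Ajmap(y) + Z = C,\ \Dist(Z,\snp) \leq \gamma_1,\ -\innerproduct{b}{y} \leq \gamma_2\}$ is bounded — the dual analogue of the fact cited from \cite[page 110]{rockafellar1976augmented}; here one also uses that $\Ajmap$ is injective so $y$ is determined by $Z$. Since part (b) shows $\Dist(Z_{k+1},\snp) \to 0$ and $-\innerproduct{b}{y_{k+1}} \to -\dstar$, the sequence $\{(y_k,Z_k)\}$ eventually lies in such a bounded set, hence $\{y_k\}$ is bounded; any cluster point is feasible (by continuity and $\Dist(Z_{k+1},\snp)\to 0$) and attains the optimal value, hence lies in $\dsolution$. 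The main obstacle is bookkeeping rather than conceptual: one must carefully verify that the $\prox$--ALM correspondence and the Moreau-envelope identity \cref{prop:Moreau-envelope} transfer verbatim to the dual formulation with the variable $X$ playing the role of the multiplier, and that the weak-duality inequality is oriented correctly after the sign flip $-\dstar = \min(\cdots)$. No new regularity assumption (such as strict complementarity) is needed for this proposition, exactly as in \cref{prop:asymptotic-conic-alm}.
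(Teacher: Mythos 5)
Your proposal is correct and follows essentially the same route as the paper's own proof: part (a) via the Rockafellar ALM--PPM correspondence plus \cref{prop:convergence-ppm}, part (b) via the Moreau decomposition of $r_kZ_{k+1}-X_k$ and the Moreau-envelope/weak-duality bound $\min_y L_{r_k}(y,X_k)\le -\dstar$, and part (c) via the level-set boundedness characterization from \cite[page 110]{rockafellar1976augmented} together with injectivity of $\Ajmap$. The only (immaterial) difference is that you restrict the level set in part (c) to the affine manifold $\Ajmap(y)+Z=C$, which suffices here since $Z_k$ is defined to satisfy it exactly.
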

\begin{proof}
    The proof follows similarly as the proof in \cref{prop:asymptotic-conic-alm}.
    \begin{itemize}
[leftmargin=*,align=left,noitemsep]
        \item Part (a) comes directly from the PPM convergence by noting 
\cref{prop:PPM-ALM-review} and the stopping criterion \cref{eq:stopping-alm-dual-conic-1} imply the dual iterate $X_{k+1}$ satisfies 
$\|X_{k+1} - \prox_{r_k,-g_0}(X_k)\| \leq \epsilon_k$.
\item  By the Moreau decomposition \cite[Exercise 12.22]{rockafellar2009variational}, we have
    \begin{equation*}
        \begin{aligned}
            r_kZ_{k+1}-X_k = \Pi_{\snp}(r_kZ_{k+1}-X_k) - \Pi_{\snp}(X_k-r_kZ_{k+1}) = \Pi_{\snp}(r_kX_{k+1}-Z_k) - X_{k+1}.
        \end{aligned}
    \end{equation*}
    Thus, $Z_{k+1}+\frac{X_{k+1}-X_k}{r_k}\in\snp$.
    Therefore,
    \begin{equation*}
        \begin{aligned}
            \Dist(Z_{k+1},\snp) &\leq  \left\|Z_{k+1} - \left(Z_{k+1}+\frac{X_{k+1}-X_k}{r_k}\right)\right\| = \frac{1}{r_k}\left\|X_{k+1}-X_k\right\|.
        \end{aligned}
    \end{equation*}
    From the definition of the augmented Lagrangian function, we have 
    \begin{equation*}
        L_{r_k}(y_{k+1},X_k) = \innerproduct{-b}{y_{k+1}}+\frac{1}{2r}(\|X_{k+1}\|^2-\|X_k\|^2).
    \end{equation*}
    On the other hand, we know
    \begin{equation*}
        \min_{y \in \RR^m} L_{r_k}(y,X_k) = g_0(U^\star) - \frac{1}{2r_k}\|U^\star - X_k\|^2 \leq g_0(U^\star) \leq -\dstar, 
    \end{equation*}
    where $U^\star$ denoted as the point that achieves the maximum in the identity \cite[Theorem 3.2]{rockafellar1973dual}
    \begin{equation*}
        \min_{y \in \RR^m} L_{r_k}(y,\cdot) = \max_{X \in \sn } \; g_0(X) - \frac{1}{2r_k}\|X - \cdot\|^2
    \end{equation*}
    and the last inequality comes from the definition of the dual problem in \cref{eq:dual-function-dual}.
    Thus,
    \begin{equation*}
        \dstar-\innerproduct{b}{y_{k+1}} \leq L_{r_k}(y_{k+1},X_k) - \min_{y \in \RR^m} L_{r_k}(y,X_k) - \frac{1}{2r}(\|X_{k+1}\|^2-\|X_k\|^2).
    \end{equation*}
    
    \item Part (c) is a consequence of part (b) and the fact that $\dsolution$ is bounded if and only if the set $\{ (y,Z) \in \RR^m \times \sn \; | \; \dstar - \innerproduct{b}{y} \leq  \gamma_1, \Dist(Z,\snp) \leq \gamma_2, \|C-\Ajmap(y) - Z\| \leq \gamma_3 \}$ is bounded for any $\gamma \in \RR^3$ \cite[page 110]{rockafellar1976augmented}.
\end{itemize} 
\vspace{-8 mm}
\end{proof}

\begin{theorem}[Linear convergences]
    \label{prop:KKT-residual-dual}
   Consider primal and dual SDPs \cref{eq:primal-conic,eq:dual-conic}.
   Assume strong duality and strict complementarity holds (implying $\psolution \neq \emptyset$ and $\dsolution \neq \emptyset$). Let $\{y_k,X_k\}$ be a sequence from the ALM \cref{eq:alm-dual-update-conic} under \cref{eq:stopping-alm-dual-conic-1,eq:stopping-alm-dual-conic-2} and write $Z_k = C - \Ajmap (y_k)$ for all $k \geq 0$. The following statements hold.
\begin{enumerate}[label=(\alph*),leftmargin=*,align=left,noitemsep]
       \item There exists constants $\hat{k} \geq 0$, $0<\mu_k < 1$ and $\mu_k',\mu''_k > 0$ such that for all $k \geq \hat{k}$,
    \begin{equation} \label{eq:linear-primal-residual-dual-dist-ALM-dual}
        \begin{aligned} 
            \Dist(X_{k+1}, \psolution) & \leq \mu_k \cdot \Dist(X_k,\psolution),\\
            \Dist(Z_{k+1},\snp)  &\leq \mu^\prime_k \cdot \Dist(X_k,\psolution), \\
            \dstar - \innerproduct{b}{y_{k+1}} & \leq \mu^{\prime \prime}_k \cdot \Dist(X_k, \psolution).
        \end{aligned}
        \end{equation}
   \item If $\dsolution$ is bounded, then the primal sequence $\{y_k\}$ also converges linearly to $\dsolution$, i.e., there is a constant $\hat{k}\geq 0$ such that for all $k \geq \hat{k}$,
     $$
    \Dist^2(y_{k+1},\psolution) \leq \tau_k \cdot \Dist(X_k,\dsolution).
    $$
\end{enumerate}
\end{theorem}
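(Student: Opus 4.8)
The plan is to transcribe the proof of \cref{prop:KKT-residual} into the ``dual'' setting, running the ALM \cref{eq:alm-dual-update-conic} in place of \cref{eq:alm-update-conic-primal-both} and interchanging the roles of the primal and dual solution sets. Two facts drive everything: (i) the counterpart of \cref{prop:PPM-ALM-review} for \cref{eq:alm-dual-update-conic} identifies the sequence $\{X_k\}$ with an inexact PPM applied to $-g_0$, where $g_0$ is the Lagrangian dual function in \cref{eq:dual-function-dual}, exactly as used in the proof of \cref{prop:asymptotic-conic-alm-dual}; and (ii) under dual strict complementarity, \cref{theorem:growth-error-bound} (with $g$ in \cref{eq:conic-r-p} the indicator $\delta_{\snp}$) guarantees that $-g_0$ satisfies quadratic growth around $\psolution=\argmax_X g_0(X)$ in the sense of \cref{eq:QG-conic-programs-primal}, with some constant $\muq>0$, while \cref{theorem:growth-error-bound-dual} supplies the error bound \cref{eq:error-bound-conic-dual} for the dual conic program.

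For part (a), I would first observe that the counterpart of \cref{prop:PPM-ALM-review} together with the stopping rules \cref{eq:stopping-alm-dual-conic-1,eq:stopping-alm-dual-conic-2} gives $\|X_{k+1}-\prox_{r_k,-g_0}(X_k)\|\le\epsilon_k$ and $\|X_{k+1}-\prox_{r_k,-g_0}(X_k)\|\le\delta_k\|X_{k+1}-X_k\|$, so the hypotheses of \cref{prop:convergence-ppm}(b) hold for $f=-g_0$; hence there is $\hat{k}_1$ with $\Dist(X_{k+1},\psolution)\le\mu_k\,\Dist(X_k,\psolution)$ for $k\ge\hat{k}_1$, where $\mu_k=(\theta_k+2\delta_k)/(1-\delta_k)<1$ and $\theta_k=1/\sqrt{2r_k\muq+1}<1$. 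For the two remaining residuals I would combine \cref{prop:asymptotic-conic-alm-dual}(b), which yields $\Dist(Z_{k+1},\snp)\le\|X_{k+1}-X_k\|/r_k$ and $\dstar-\innerproduct{b}{y_{k+1}}\le\big(\delta_k^2\|X_{k+1}-X_k\|^2+\|X_k\|^2-\|X_{k+1}\|^2\big)/(2r_k)$, with the dual analogue of \cref{prop:consequence-stopping-b} (immediate from \cref{prop:property-proximal} and \cref{eq:stopping-alm-dual-conic-2}): $\|X_{k+1}-X_k\|\le\Dist(X_k,\psolution)/(1-\delta_k)$ whenever $\delta_k<1$. Substituting gives the claimed inequalities with $\mu_k'=1/((1-\delta_k)r_k)$ and $\mu_k''=(\delta_k^2\|X_{k+1}-X_k\|+\|X_k\|+\|X_{k+1}\|)/(2r_k(1-\delta_k))$, valid once $k$ exceeds some $\hat{k}_2$ with $\delta_k<1$; taking $\hat{k}=\max\{\hat{k}_1,\hat{k}_2\}$ finishes part (a). Note there is no affine-feasibility term here, since $Z_k:=C-\Ajmap(y_k)$ forces $C-\Ajmap(y_{k+1})-Z_{k+1}=0$.

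For part (b), dual strict complementarity and \cref{theorem:growth-error-bound-dual} give, on every compact $\mathcal{V}$ meeting $\dsolution$, constants $\gamma,\alpha,\kappa>0$ with $\dstar-\innerproduct{b}{y}+\gamma\|C-\Ajmap(y)-Z\|+\alpha\,\Dist(Z,\snp)\ge\kappa\,\Dist^2((y,Z),\dsolution)$. When $\dsolution$ is bounded, \cref{prop:asymptotic-conic-alm-dual}(c) shows $\{y_k\}$, hence $\{(y_k,Z_k)\}$, is bounded, so I may take $\mathcal{V}$ to contain $\dsolution$ and all iterates. Evaluating at $(y_{k+1},Z_{k+1})$ the affine term drops out, leaving $\dstar-\innerproduct{b}{y_{k+1}}+\alpha\,\Dist(Z_{k+1},\snp)\ge\kappa\,\Dist^2((y_{k+1},Z_{k+1}),\dsolution)$. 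Inserting the two part-(a) bounds gives $\kappa\,\Dist^2((y_{k+1},Z_{k+1}),\dsolution)\le(\mu_k''+\alpha\mu_k')\,\Dist(X_k,\psolution)$, and dividing by $\kappa$ yields the assertion with $\tau_k=(\mu_k''+\alpha\mu_k')/\kappa$; the distance of $y_{k+1}$ to the $y$-projection of $\dsolution$ is controlled by $\Dist((y_{k+1},Z_{k+1}),\dsolution)$, and conversely up to a constant since $\Ajmap$ is injective.

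The construction is essentially routine once the dictionary ``$X\leftrightarrow w$, $\psolution\leftrightarrow\dsolution$, \cref{eq:dual-function-dual}$\leftrightarrow$\cref{eq:dual-function-review}'' is in place, and \cref{prop:asymptotic-conic-alm-dual} is used wherever \cref{prop:KKT-residual} invokes \cref{prop:asymptotic-conic-alm}. The one point that genuinely needs checking is that $-g_0$ in \cref{eq:dual-function-dual}, now a function of the single matrix variable $X$, is covered by \cref{theorem:growth-error-bound} --- i.e. that $\max_X g_0(X)$ is the primal problem \cref{eq:primal-conic} reformulated with an indicator function, so that \cref{eq:QG-conic-programs-primal} applies to it --- together with the (easy) verification that the compact-set hypotheses of \cref{theorem:growth-error-bound,theorem:growth-error-bound-dual} are met along the bounded iterate sequences. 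Neither is a real obstacle, so I expect the main difficulty to be purely organizational: keeping the primal/dual bookkeeping straight throughout.
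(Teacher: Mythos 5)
Your proposal is correct and follows essentially the same route as the paper's own proof: PPM convergence of $\{X_k\}$ for $-g_0$ under quadratic growth (from \cref{theorem:growth-error-bound} with the indicator function) for part (a), the dual analogue of \cref{prop:consequence-stopping-b} for the residual bounds, and the error bound \cref{eq:error-bound-conic-dual} with the vanishing affine term for part (b), with the same constants $\mu_k'$ and $\mu_k''$. Your remark that the quadratic growth needed is \cref{eq:QG-conic-programs-primal} (since $g_0$ here is a function of $X$ with maximizer set $\psolution$) is the right reading of what the paper invokes.
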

\begin{proof}    
         Similar to \cref{prop:consequence-stopping-b}, if \cref{eq:stopping-alm-dual-conic-2} is used, we have 
    \begin{equation}
        \label{eq:consequence-stopping-b-dual}
        \|X_{k+1} - X_k\|\leq \frac{1}{1-\delta_k} \Dist(X_k,\psolution), \quad \forall \delta_k < 1.
    \end{equation}
    Therefore, there exists a $\hat{k}\geq 0$ such that $\forall k \geq \hat{k},\delta_k < 1$ and 
    \begin{equation*}
        \begin{aligned}
    \Dist(Z_{k+1},\snp)&\leq\frac{\|X_{k+1}-X_k\|}{r_k}\leq \frac{1}{r_k(1-\delta_k)}\Dist(X_{k+1},\psolution),
        \end{aligned}
    \end{equation*}
    where the first inequality uses part (b). 
    Then, once again using \cref{eq:consequence-stopping-b-dual} and part (b), we have
    \vspace{-1mm}
    \begin{equation*}
        \begin{aligned}
            \dstar - \innerproduct{b}{y} &\leq \frac{\delta_k^2\|X_{k+1}-X_k\|^2 + \|X_k\|^2 - \|X_{k+1}\|^2}{2r_k} \\
            &\leq \frac{\delta_k^2\|X_{k+1}-X_k\|^2 + (\|X_k\| - \|X_{k+1}\|)(\|X_k\| + \|X_{k+1}\|)}{2r_k} \\
            &\leq \frac{\delta_k^2\|X_{k+1}-X_k\|^2 + (\|X_k - X_{k+1}\|)(\|X_k\| + \|X_{k+1}\|)}{2r_k} \\
            &\leq \frac{\delta_k^2\|X_{k+1}-X_k\| + \|X_k\| + \|X_{k+1}\|}{2r_k(1-\delta_k)}\Dist(X_{k+1},\psolution).
        \end{aligned}
    \end{equation*}
    If strict complementarity holds,  \cref{theorem:growth-error-bound} (with $g$ in \cref{eq:conic-r-d} as an indicator function) guarantees that $-g_0$ in \cref{eq:dual-function-dual} satisfies quadratic growth in \cref{eq:QG-conic-programs-dual}. Thus, the linear convergence of $\Dist(X_k,\psolution)$ comes  from the PPM convergence \cref{prop:convergence-ppm}, i.e., there exists a $\hat{k}_1$ such that for all $k \geq \hat{k}_1$, it holds that
\begin{equation*}
    \Dist(X_{k+1},\psolution)\leq \mu_k \Dist(X_{k},\psolution),\quad \mu_k = \frac{\theta_k + 2 \delta_k}{1 - \delta_k} < 1, \quad {\theta}_{k} = \frac{1}{\sqrt{2 r_k \muq \!+\!1}} 
     < 1.
\end{equation*}
    Since $\delta_k \to 0 $, there exists a $\hat{k}_2$ such that $\delta_k < 1$ for all $k \geq \hat{k}_2$. Taking $\hat{k} = \max\{\hat{k}_1,\hat{k}_2\}$ completes the proof for part (a) with the constants 
    \begin{equation*}
        \begin{aligned}
            \mu_k' &= \frac{1}{r_k(1-\delta_k)} \to \mu_\infty' = \frac{1}{r_\infty},\\
            \mu_k'' &= \frac{\delta_k^2\|X_{k+1}-X_k\| + \|X_k\| + \|X_{k+1}\|}{2r_k(1-\delta_k)} \to \frac{\|X_\infty\|}{r_\infty}.
        \end{aligned}
    \end{equation*}
    On the other hand, \cref{theorem:growth-error-bound} also guarantees that the error bound \cref{eq:error-bound-conic-dual} holds, i.e., for any bounded set $\mathcal{V} \supseteq \dsolution$, there exist constants  $\alpha_1,\alpha_2,\alpha_3> 0$ such that 
    \begin{equation} \label{eq:error-bound-proof-dual}
       d^\star-\innerproduct{b}{y}   +\alpha_1\|C - \Ajmap(y) - Z\| + \alpha_2 \cdot \Dist(Z,\snp)  \geq  \alpha_3 \cdot \Dist^2((y,Z),\dsolution), \quad \forall (y,Z) \in \mathcal{V}.
    \end{equation} By the design of \cref{eq:alm-dual-update-conic}, we always have $\|C - \Ajmap(y_k) - Z_k\| = 0$. By \cref{prop:asymptotic-conic-alm-dual} - part (c), the sequence $\{y_k,Z_k\}$ is bounded.  Thus,
    choosing a bounded set $\mathcal{V} \supseteq \dsolution \bigcup \cup_{k \geq 1} \{y_k,Z_k\}$ and combing \cref{eq:error-bound-proof-dual} with \cref{eq:linear-primal-residual-dual-dist-ALM-dual}, we have 
    \begin{equation*}
    \begin{aligned}
        \alpha_3 \cdot \Dist^2(y_{k+1},\dsolution)& \leq  (\mu^{\prime \prime}_k 
  + \alpha_2 \mu_k^\prime)\cdot\Dist(X_k,\psolution), \; \forall k \geq \hat{k}.
    \end{aligned}
    \end{equation*}
    Dividing both sides by $\alpha_3$ leads to the desired result. This completes the proof. 
\end{proof}

\section{Details of the numerical experiments}
\label{appendix:sec:numerical-experiment}
In \cref{sec:numerical-experimetns}, we consider the classical SDP relaxation of the Mac-Cut problem and the linear SVM. In this section, along with another popular signal processing problem - Lasso, we detail the problem formulation and report the numerical performance of ALM applied to those problems.
\begin{itemize}
[leftmargin=*,align=left,noitemsep]
\item The SDP relaxation of Max-Cut: 
        \begin{equation*}
    \begin{aligned}
        \min_{X \in \sn } & \quad \innerproduct{C}{X} \\\mathrm{subject~to} & \quad \mathrm{Diag}(X) = \mathsf{1}, \\
        & \quad X \in \snp,
    \end{aligned}
\end{equation*}
where $C \in \sn$ is the problem data representing a weighted undirected graph, $\mathsf{1} \in \RR^n$ is an all one vector, and $\mathrm{Diag}(X) = \begin{bmatrix}
    X_{11}, & \cdots &, X_{nn}
\end{bmatrix}^\tr$. This type of problem has been shown to satisfy strict complementarity and has unique primal and dual solutions for almost all data matrix $C$ \cite[Corollary 2.3]{ding2021simplicity}.
\item Linear SVM:
 \begin{equation*}
        \min_{x\in \RR^d}\; \lambda \sum_{i = 1}^m \max \{0,1 + b_i(a_i^\tr x)\} + \frac{1}{2} \|x\|^2,
\end{equation*}
where $ a_i \in \RR^n$ is the feature of the $i$-th data point,  $b_i \in \{-1,1\} $ is the corresponding label, and $\lambda > 0$ is a constant. Equivalently, the problem can reformulated in the following standard conic form
 \begin{equation*}
     \begin{aligned}
         \min_{x\in \RR^n, t \in \RR^m}   & \quad \frac{1}{2} \|x\|^2 + \lambda \mathbf{1}^\tr t \\
            \mathrm{subject~to} & \quad  \mathrm{Diag}(b)Ax +\mathbf{1}  \leq t,  \\
            & \quad 0\leq t,
     \end{aligned}
\end{equation*}
where $A  = \begin{bmatrix}
    a_1,\cdots,a_m
\end{bmatrix}^\tr \in \RR^{m \times n}$ and $\mathrm{Diag}(b) \in \mathbb{S}^m$ denotes the diagonal matrix with the diagonal elements being $b$.
\item Lasso:
     \vspace{-2 mm}
    \begin{equation*}
          \min_{x\in \RR^n}\quad \frac{1}{2} \| Ax - b   \|^2 + \lambda \|x\|_{1},
    \end{equation*}
    where $A \in \RR^{m\times n}, b\in \RR^m$ are problem data and $ \lambda > 0$ is the weighted parameter balancing the sparsity of the solution and the accuracy of the linear solution. Equivalently, the problem can be rewritten in the conic form 
     \begin{equation*}
     \begin{aligned}
         \min_{x\in \RR^n}   & \quad \frac{1}{2} \|y\|^2 + \lambda \mathbf{1}^\tr t \\
            \mathrm{subject~to} & \quad  Ax - b= y, \\
            & \quad  -t \leq x \leq t.
     \end{aligned}
\end{equation*}
\end{itemize}
All numerical experiments are conducted on a PC with a 12-core Intel i7-12700K CPU@3.61GHz and 32GB RAM. For Max-Cut problem, we select the graph $\text{G}_1,\text{G}_2,$ and $\text{G}_3$ from the website \url{https://web.stanford.edu/~yyye/yyye/Gset/} and only take the first $20 \times 20$ submatrix as the considered problem data $C$. For both the linear SVM and Lasso, we randomly generate the problem data with the dimension $m = 100$ and $n = 10$, and set the constant $\lambda = 1$. We use Mosek \cite{mosek} to get the optimal cost value. In each application, we consider the following two scenarios 1) fixing the same augmented parameter $r_k = r$ for all $k$ for three different problem instances; 2) fixing a problem instance and varying different augmented parameter $r$. The numerical results are presented in \cref{fig:additional-numerics-SDP,fig:additional-numerics-SVM,fig:additional-numerics-LASSO}.
\begin{figure}[t]
     \centering
{\includegraphics[width=1\textwidth]
{Figures/Numerical-result-maxcut.eps}}
{\includegraphics[width=1\textwidth]
{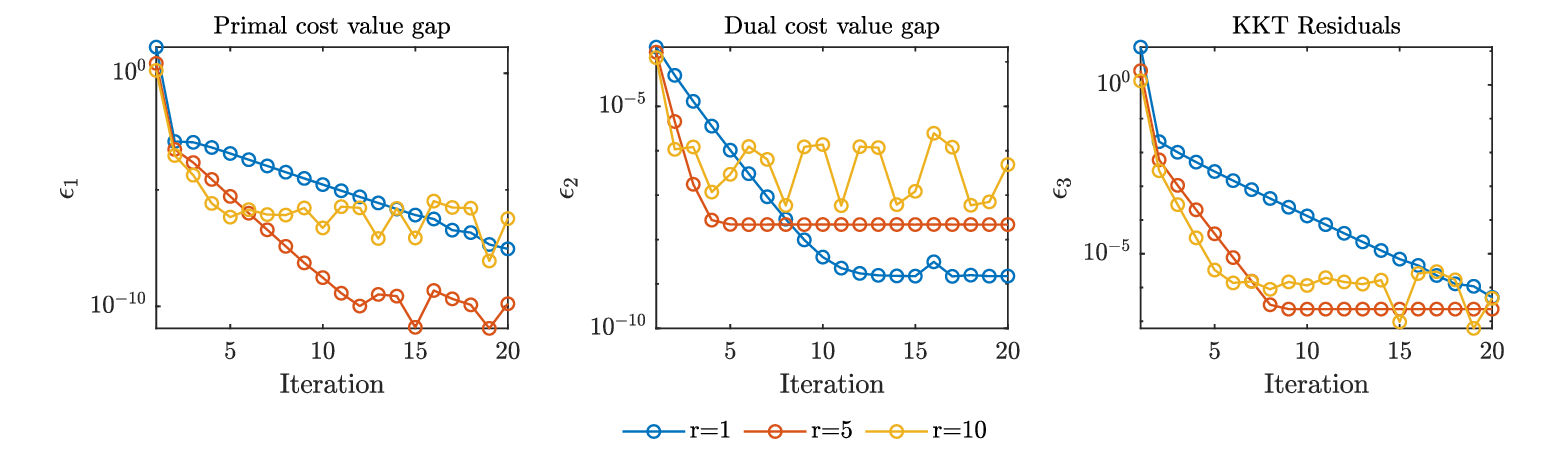}}
\caption{Nuremical experiments for Max-Cut with different instances and various augmented Lagrangian parameter $r > 0.$ }
\label{fig:additional-numerics-SDP}
\end{figure}
For each \cref{fig:additional-numerics-SDP,fig:additional-numerics-SVM,fig:additional-numerics-LASSO}, the first row shows the evolution of the primal cost gap, dual cost gap, and KKT residuals for three different instances with a fixed augmented parameter (scenario one), and the second row compares the different numerical behaviors when varying the augmented parameter (scenario two). In the setting of a fixed augmented parameter, the augmented term $r$ is set as $1, 5, 20$ for Max-Cut, linear SVM, and Lasso respectively. In the setting of various augmented parameters, the parameter is chosen as $r = \{1,5,10\}$, $r = \{1,5,10\}$, and $r = \{10,20,30\}$ for Max-Cut, linear SVM, and Lasso respectively.

We observe that, in all cases, ALM enjoys linear convergence in the primal cost value gap, dual cost value gap, and KKT residuals to the accuracy of at least $10^{-5}$, which is consistent with our theoretical findings in \cref{prop:KKT-residual}. We believe the oscillated or flattening behavior that appears in the tail (when the iterates are close to the solution set) is due to the inaccuracy of the subproblem solver and computational errors. The numerical result also indicates that when the augmented term $r$ increases, ALM favors the decrease of the feasibility more, leading to faster convergence in the beginning phase. However, a large $r$ may not lead to the best convergence in the long term.

Note that the projection term $\|\Pi_{\snp}(Z - rX)\|^2$ in the subproblem  \cref{eq:alm-update-conic-primal-a} can not be directly modeled by Yalmip. Therefore, we reformulate the problem as the following
\begin{align*}
    & \argmin_{X \in \sn} \innerproduct{C}{X} + \frac{1}{2r}(\|y + r_k(b-\Amap(X))\|^2 +\|\Pi_{\snp}(Z-r_kX)\|^2 - \|y\|^2 - \|Z\|^2) \\
         = &\argmin_{X\in \sn} \innerproduct{C}{X} + \frac{1}{2r}(\|y + r_k(b-\Amap(X))\|^2 +\|\Pi_{\sn_{-}}(r_kX-Z)\|^2) \\
         = &\argmin_{X\in \sn,U \in \snp} \innerproduct{C}{X} + \frac{1}{2r}(\|y + r_k(b-\Amap(X))\|^2 +\| r_kX - Z - U\|^2),
\end{align*}
where the first equality drops two independent terms and uses $-\snp = \sn_{-}$ and $\|\Pi_{\snp}(Y)\| = \|-\Pi_{(-\snp)} (-Y)\| = \|\Pi_{(\sn_{-})} (-Y)\|$ for all $Y \in \sn$, and the last equality uses $\min_{U \in \snp}\|Y - U\| = \|Y - \Pi_{\snp}(Y)\| = \|\Pi_{\sn_{-}}(Y)\|$ for all $Y \in \sn.$

\begin{figure}[H]
     \centering
{\includegraphics[width=1\textwidth]
{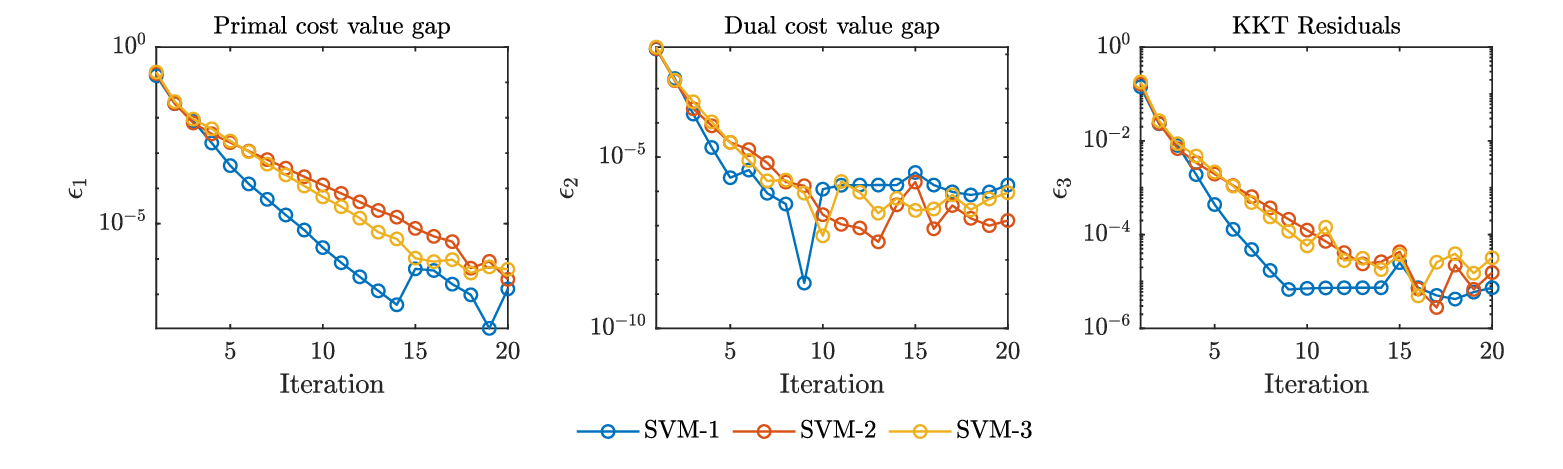}}
{\includegraphics[width=1\textwidth]
{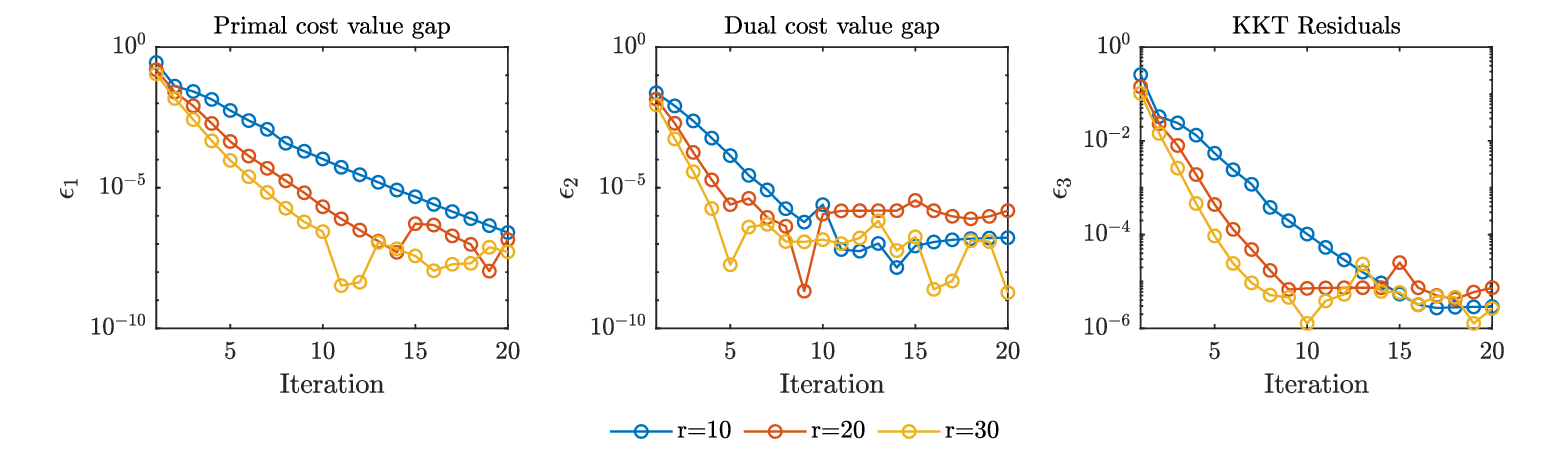}}
\caption{Nuremical experiments for linear SVM with different instances and various augmented Lagrangian parameter $r > 0.$ }
\label{fig:additional-numerics-SVM}
\end{figure}

 \begin{figure}[H]
     \centering
{\includegraphics[width=1\textwidth]
{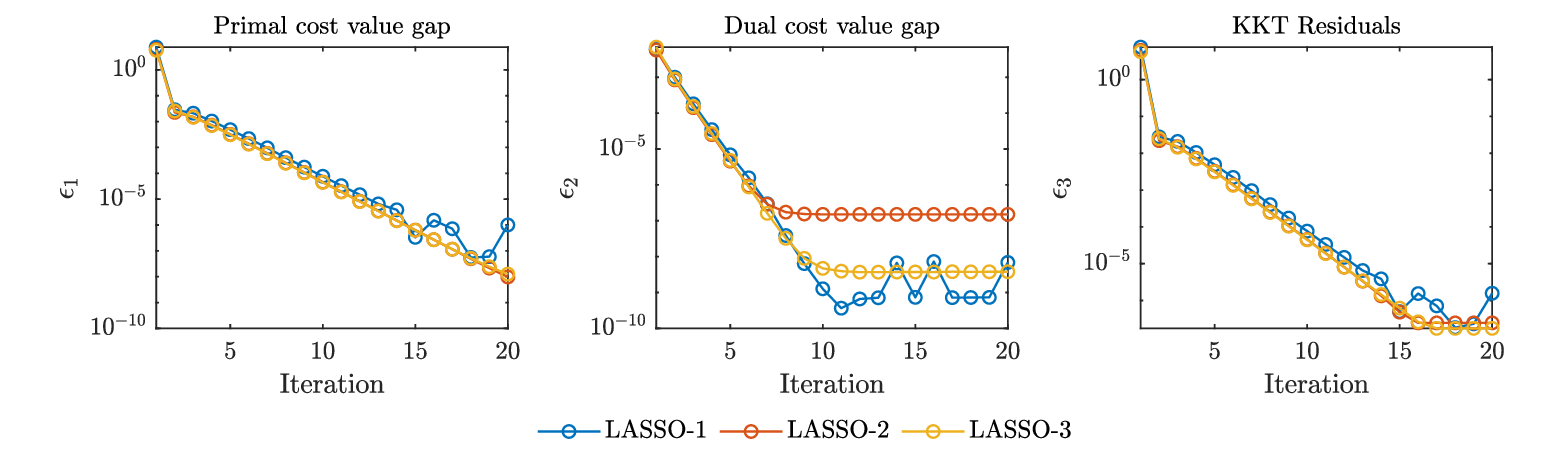}}
{\includegraphics[width=1\textwidth]
{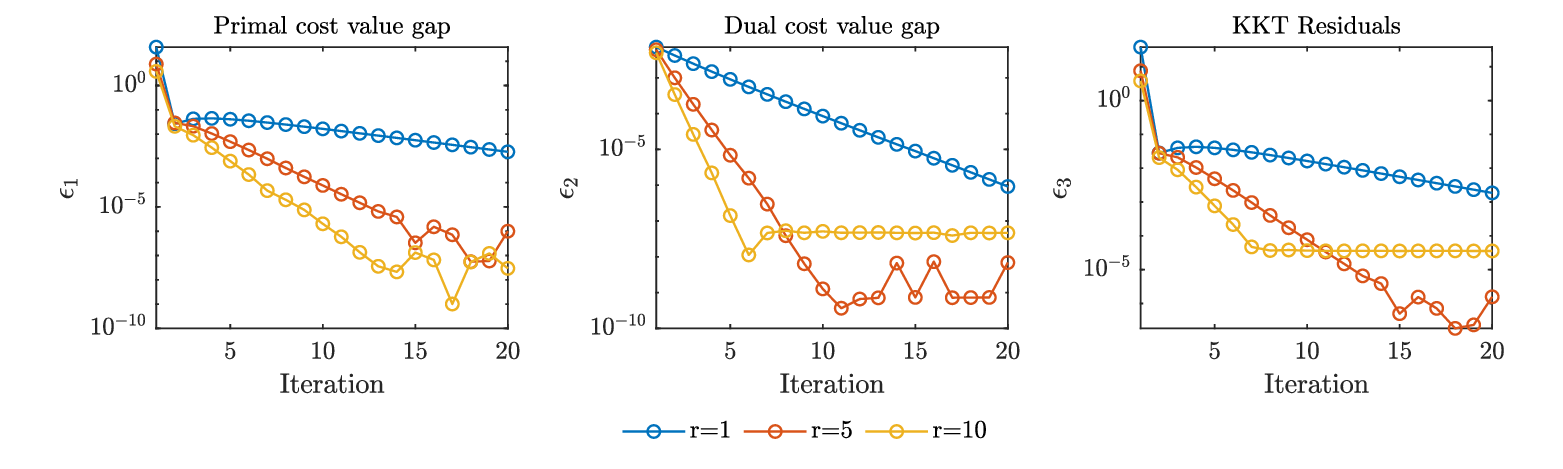}}
\caption{Nuremical experiments for Lasso with different instances and various augmented Lagrangian parameter $r > 0.$ }
\label{fig:additional-numerics-LASSO}
\end{figure}

\end{document}